\numberwithin{equation}{section}
\DeclareFontFamily{U}{cal}{}
\DeclareFontShape{U}{cal}{m}{n}{<->cmsy10}{}
\DeclareSymbolFont{rcal}{U}{cal}{m}{n}
\DeclareSymbolFontAlphabet{\mathcal}{rcal}
\newtheorem{Def}{Definition}[section]
\newtheorem{Bsp}[Def]{Example}
\newtheorem{Prop}[Def]{Proposition}
\newtheorem{Theo}[Def]{Theorem}
\newtheorem{Lem}[Def]{Lemma}
\newtheorem{Koro}[Def]{Corollary}
\theoremstyle{definition}
\newtheorem{Rem}[Def]{Remark}
\newcommand{\bsm}{\begin{smallmatrix}}
	\newcommand{\esm}{\end{smallmatrix}}
\newcommand{\add}{{\rm add}}
\newcommand{\gd}{{\rm gl.dim }}
\newcommand{\End}{{\rm End}}
\newcommand{\rd} {{\rm rep.dim}}
\def\wrd{\mathop{\rm w.resol.dim}\nolimits}
\def\rd{\mathop{\rm resol.dim}\nolimits}
\def\rep{\mathop{\rm rep.dim}\nolimits}
\newcommand{\rad}{{\rm rad}}
\renewcommand{\top}{{\rm top}}
\newcommand{\pd}{{\rm pd}}
\newcommand{\id}{{\rm id}}
\newcommand{\I}{{\mathcal I}}
\newcommand{\T}{{\mathcal T}}
\newcommand{\V}{{\mathcal V}}
\newcommand{\cpx}[1]{#1^{\bullet}}
\newcommand{\D}[1]{{\mathscr D}(#1)}
\newcommand{\Db}[1]{{\mathscr D}^b(#1)}
\newcommand{\C}[1]{{\mathscr C}(#1)}
\newcommand{\K}[1]{{\mathscr K}(#1)}
\newcommand{\Kb}[1]{{\mathscr K}^b(#1)}
\newcommand{\modcat}{\ensuremath{\text{{\rm -mod}}}}
\newcommand{\Modcat}{\ensuremath{\text{{\rm -Mod}}}}
\newcommand{\stmodcat}[1]{#1\mbox{{\rm -{\underline{mod}}}}}
\newcommand{\pmodcat}[1]{#1\mbox{{\rm -proj}}}
\newcommand{\imodcat}[1]{#1\mbox{{\rm -inj}}}
\newcommand{\opp}{^{\rm op}}
\newcommand{\Hom}{{\rm Hom}}
\newcommand{\lra}{\longrightarrow}
\newcommand{\lraf}[1]{\stackrel{#1}{\lra}}
\newcommand{\ra}{\rightarrow}
\newcommand{\ITdist}{{\rm IT.dist}}
\newcommand{\Kfb}[1]{{\mathscr K}^{-,b}(#1)}
\newcommand{\Kpb}[1]{{\mathscr K}^{+,b}(#1)}
\newcommand{\per}{\mathsf {per}}
\newcommand{\Dsg}{\mathscr{D}_{\mathrm{sg}}}
\title{ \bf  Igusa-Todorov distances
	\footnotetext{
		2020 Mathematics Subject Classification: 16E05, 16G10, 16E10, 18G80.}\\
	\footnotetext{
		Keywords: Igusa-Todorov distance; Stable equivalence; Derived equivalence; Singular equivalence; Recollement.}
	\footnotetext{Email addresses: zhangjb@ahu.edu.cn, zhengjunling@cjlu.edu.cn}
}
\author {Jinbi Zhang, Junling Zheng
	\thanks{Corresponding author} \\
	{\it \scriptsize  School of Mathematical Sciences, Anhui University, Hefei, 230601, Anhui Province, PR China}\\
	{\it \scriptsize  Department of Mathematics, China Jiliang University, Hangzhou, 310018, Zhejiang Province, PR China}
}
\date{}
\begin{document}
	
	\maketitle
	\begin{abstract}
		A new homological dimension, called the Igusa-Todorov distance, is introduced to measure how far an Artin algebra is from being an Igusa-Todorov algebra.
		An upper bound for the dimension is established in terms of the Loewy length, leading to the conclusion that every Artin algebra has a finite Igusa-Todorov distance.
		Using this dimension, we derive an upper bound for the dimension of the singularity category. 
		Furthermore, we investigate how the Igusa–Todorov distance behaves under various relationships between algebras. 
		Specifically, we demonstrate that stable equivalences preserve the Igusa-Todorov distances for algebras without nodes, prove that it is an invariant under singular equivalence of Morita type with level, and establish bounds for the distances of algebras involved in a recollement of derived module categories. Consequently, the Igusa-Todorov distance is an invariant under derived equivalences of algebras.
	\end{abstract}
	
	{\footnotesize\tableofcontents\label{contents}}	
	
	\section{Introduction}
	
	Let $A$ be an Artin algebra. We denote by $A\modcat$ the category of finitely generated left $A$-modules and by $\Db{A\modcat}$ the bounded derived category of $A\modcat$. Recall that the \textit{finitistic dimension} of $A$ is defined to be the supermum of the projective dimensions of all finitely projective modules of finite projective dimension. The famous finitistic dimension conjecture states that the finitisctic dimension of an Artin algebra is always finite (see \cite[conjecture (11), p. 410]{ars97}). The conjecture is still open now. It is closely related to several conjectures in the representation theory (see \cite[pp. 409-410]{ars97}).
	In order to study the conjecture, Igusa and Todorov introduced two functions, $\Phi$ and $\Psi$, now known as the Igusa-Todorov functions (\cite{it05} ).These functions have since become an important tool in the study of the finitistic dimension conjecture (see \cite{wei09,x06}). Using the properties of Igusa-Todorov functions, Wei defines $n$-Igusa-Todorov algebras (\cite{wei09}), which satisfy the finitistic dimension conjecture. A natural question arises, namely, are all Artin algebras Igusa-Todorov algebras?
	However, Conde showed that not every Artin algebra is an Igusa-Todorov algebra (\cite{conde16}). 
	In order to measure how far an algebra is from an Igusa-Todorov algebra, we introduce the notation of the Igusa-Todorov distance for Artin algebras. 
	
	When introducing a new homological dimension, a basic question arises: Does every Artin algebra have a finite Igusa–Todorov distance?
	In this paper, we give a positive answer to this question by proving an upper bound for the Igusa–Todorov distance.
	
	\begin{Theo}\label{main-theo-bound}
		{\rm (Theorem \ref{theo-bound})}
		Let $A$ be an Artin algebra and $\ell\ell(A)$ its Loewy length.
		Then $$\ITdist(A)\le \max\{\ell\ell(A)-2,0\}.$$
	\end{Theo}
	
	The dimension of triangulated categories, introduced by Rouquier, is an invariant that measures how efficiently a triangulated category can be generated from a single object.
	This notion has proven useful in studying the representation dimension of Artin algebras (see \cite{0pp09, rou06, rou08}).
	The singularity category $\Dsg(A)$ of an algebra $A$ is defined as the Verdier quotient of the bounded derived category $\Db{A\modcat}$ by the full subcategory of perfect complexes (see \cite{Buch86, Orl04}). 
	The singularity category captures the homological singularity of the algebra: $A$ has finite global dimension if and only if its singularity category is trivial.
	In this paper, we establish an upper bound for the dimension of the singularity category in terms of the Igusa–Todorov distance.
	
	\begin{Theo}\label{main-sing-it}
		{\rm (Theorem \ref{sing-it})}
		Let $A$ be an Artin algebra. Then $$\dim(\Dsg(A))\le \ITdist(A).$$
	\end{Theo}
	
	Note that the precise value of the Igusa–Todorov distance for a given algebra is generally very difficult to compute directly. A possible approach is to investigate how this distance behaves under certain well-structured relationships between algebras.
	
	In representation theory, a fundamental relation is stable equivalence. 
	Mart\'inez-Villa showed that stable equivalences preserve both the global and dominant dimensions of algebras without nodes (\cite{MV1990}).
	Guo showed that the representation dimension is also preserved under stable equivalences (\cite{Guo05}); this result had already been established by Xi in the case of stable equivalences of Morita type (\cite{Xi02} ). 
	Koenig and Liu demonstrated that simple-minded systems are invariant under stable equivalences (\cite{kl12}). 
	Xi and Zhang showed that the delooping levels,  $\phi$-dimensions and $\psi$-dimensions of Artin algebras are preserved under stable equivalences for algebras without nodes (\cite{Xi2022}).
	Recently, Zhang and Zheng showed that extension dimensions are stable equivalence invariants for Artin algebras (\cite{zz24}). 
	In this paper, we prove that the Igusa–Todorov distance is also preserved under stable equivalences between Artin algebras without nodes.
	
	\begin{Theo}\label{main-st-thm}
		{\rm (Theorem \ref{st-thm})}
		Let $A$ and $B$ be stably equivalent Artin algebras without nodes. Then $$\ITdist(A)=\ITdist(B).$$
	\end{Theo}
	
	Two algebras are said to be {\it singularly equivalent} if their singularity categories are equivalent as triangulated categories. In particular, derived equivalent algebras are singularly equivalent, but the converse is not true in general. For this reason, many scholars are devoted to extending the properties preserved under derived equivalences to singular equivalences (see \cite{Chen18,chqw23,Ska16, Wang21}). A particularly important class of such equivalences arises from bimodules, as introduced by Chen and Sun (\cite{CS12}) and generalized by Wang to singular equivalences of Morita type with level (\cite{Wang15}).
	Such equivalences have been shown to preserve various homological conjectures. For instance, the finitistic dimension conjecture and Keller’s conjecture on singular Hochschild cohomology are invariant under singular equivalences of Morita type with level (\cite{CLW20, Wang15}). We contribute the following:
	
	\begin{Theo}\label{main-thm-level}
		{\rm (Theorem \ref{thm-level})}
		Let $A$ and $B$ be finite dimensional algebras.
		If $A$ and $B$ are singularly equivalent of Morita type with level, then $$\ITdist(A)=\ITdist(B).$$
	\end{Theo}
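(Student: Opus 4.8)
The plan is to reduce the statement to the known fact that a singular equivalence of Morita type with level transports module-theoretic data in a controlled way, and then to track what the Igusa–Todorov distance actually sees. Recall that the Igusa–Todorov distance $\ITdist(A)$ is defined (in Zheng's work, cited above) so that $A$ is an $(m,n)$-Igusa–Todorov algebra precisely when certain syzygy-type inequalities involving $\Phi$ and $\Psi$ hold after passing to the $m$-th syzygy, and $\ITdist(A)$ is the infimum of those $m$ that work. The key point is that all the quantities entering this definition --- $\Phi(\add M)$, $\Psi(\add M)$, projective dimensions, and syzygies $\Omega^i$ --- are expressible in terms of the \emph{stable} module category, equivalently in terms of $\Dsg(A)$ up to shifting by finitely generated projectives. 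So the first step is to write down precisely, as a lemma, how $\Phi$ and $\Psi$ behave under an additive functor that (a) is an equivalence on stable categories / singularity categories, and (b) commutes with syzygy up to a fixed shift in degree. This is exactly the structure provided by a singular equivalence of Morita type with level: if $(M,N)$ realizes the equivalence with level $l$, then tensoring with $M$ sends $\Omega_A^{i}(X)$ to $\Omega_B^{i-l}(M\otimes_A X)$ up to projective summands (for $i$ large), and it induces a bijection-preserving correspondence on the finitely generated modules modulo projectives.

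Second, I would recall (or prove, since it is short) that $\Phi$ and $\Psi$, as defined on the free abelian monoid generated by indecomposable non-projective modules, depend only on the image in $\Dsg$ together with the syzygy operator: $\Phi(M)$ is governed by the eventual rank behavior of $\langle \add\bigoplus_i \Omega^i M\rangle$, and likewise for $\Psi$. Since $F = M\otimes_A -$ induces an equivalence $\Dsg(A)\xrightarrow{\sim}\Dsg(B)$ that intertwines the respective syzygy shift functors (this is the defining content of ``with level''), it follows that $\Phi_B(FX) = \Phi_A(X)$ and $\Psi_B(FX)=\Psi_A(X)$ \emph{up to an additive constant bounded by $l$}. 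The care needed is that $\Phi,\Psi$ are genuinely defined on modules, not on $\Dsg$, so one must handle the finitely many ``small-syzygy'' modules and the projective summands by hand; but since the defining inequalities for $(m,n)$-Igusa–Todorov only need to hold for $m$ beyond some threshold, shifting $m$ by $l$ absorbs the discrepancy.

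Third, I would assemble these into the inequality $\ITdist(B)\le \ITdist(A)+c$ for an explicit constant $c$ depending on the level (and then, by symmetry using the companion bimodule $N$, the reverse inequality with the same constant), which gives $|\ITdist(A)-\ITdist(B)|\le 2c$ or so. To upgrade this to an \emph{equality} one cannot be sloppy about the constant: the trick is that the composite $N\otimes_B M\otimes_A -$ is isomorphic in $\Dsg(A)$ to the identity, so the two ``shift by $l$'' corrections cancel, not add. Concretely, I would set up the argument so that the witness $m$ for $A$ being $(m,n)$-IT maps to a witness $m'$ for $B$ with $m' \le m + l$, then apply the same step going back from $B$ to $A$ to get $m \le m' + l'$ where $l,l'$ are the two levels, and finally invoke the fact that in the definition of $\ITdist$ the relevant infimum is insensitive to such a bounded shift \emph{once one observes the roundtrip is the identity on the singularity category} --- so any $m$ that works for $A$ forces some $m'\le$ something for $B$ that, fed back, recovers a value $\le m$, pinning the two infima together. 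The main obstacle, and where I would spend the most effort, is precisely this bookkeeping: making sure the level-$l$ degree shift is handled so that it does not leak into the final bound, i.e. proving the clean statement that $X$ satisfies the defining syzygy inequality of an $(m,n)$-IT algebra if and only if $FX$ satisfies it in degree $m$ (not $m\pm l$), which should follow from choosing $m$ large enough that all modules involved are already ``deep in the stable range'' where $F$ is an honest equivalence commuting with $\Omega$. Everything else --- additivity of $F$, preservation of $\add$, the behavior on projectives --- is routine.
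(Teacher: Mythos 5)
Your proposal has a genuine conceptual gap that stems from a misreading of the definition of $\ITdist$, and this misreading is precisely what makes your argument fail to close to an equality. You recall the definition as involving ``certain syzygy-type inequalities involving $\Phi$ and $\Psi$,'' but this is not what an $(m,n)$-Igusa-Todorov algebra is: by Definition~\ref{def-italg}, $A$ is $(m,n)$-IT if there exists a module $U$ such that every $\Omega^n(X)$ admits an $\add(U)$-resolution of length $m+1$, and $\ITdist(A)$ is the infimum of the $m$'s for which some $n$ works. Crucially, $n$ is free in this infimum. When you transport the $\add(U)$-resolution of $\Omega_A^n(M\otimes_B Y)$ along $N\otimes_A -$ (exact, since $N_A$ is projective), the \emph{length} $m$ of the resolution is unchanged and only the syzygy index shifts: $N\otimes_A \Omega_A^n(M\otimes_B Y)\simeq \Omega_B^{n+l}(Y)$ up to projective summands. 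So $A$ being $(m,n)$-IT gives $B$ being $(m,n+l)$-IT with the same $m$, which yields $\ITdist(B)\le\ITdist(A)$ outright, and symmetry gives equality.

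Your bookkeeping instead has the level $l$ leaking into the witness $m$ (``maps to a witness $m'$ for $B$ with $m'\le m+l$''), which would at best give a bound $|\ITdist(A)-\ITdist(B)|\le\text{const}$, not an equality. The ``roundtrip is the identity on $\Dsg$'' heuristic you invoke to pin the constant to zero is where you say you would spend the most effort, and it is hand-waved because the problem it is trying to solve does not arise in the correct setup: there is no shift of $m$ to absorb, only a shift of $n$ that the infimum is blind to. The detour through $\Phi$, $\Psi$, and $\Dsg(A)$ is also unnecessary; the paper's proof works entirely with the bimodules $(M,N)$, the exactness of $N\otimes_A-$, and the stable isomorphism $N\otimes_A M\otimes_B Y\simeq\Omega_B^l(Y)$ from Definition~\ref{def-level}. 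If you reformulate your reduction so that the level shift lands in the syzygy slot $n$ rather than the length slot $m$, the argument closes cleanly with no cancellation lemma needed.
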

	
	Recollements of triangulated categories have been introduced by
	Beilinson, Bernstein and Deligne (\cite{BBD}) in order to decompose
	a derived category of constructible sheaves into two parts, an open and a closed one. Recollements of derived categories are powerful tools in connecting the homological properties of three algebras $A$, $B$ and $C$. They have been used to study properties such as global or finitistic dimension \cite{akly17,cx17,happel93}, K-theory \cite{akly17,cx12,sch06} and Hochschild (co)homology \cite{han14,Keller98,kn09}, self-injective dimension and $\Phi$-dimension \cite{qin18}, syzygy-finite properties and Igusa-Todorov properties \cite{ww22}.
	We analyze the Igusa–Todorov distance in the context of recollements:
	
	\begin{Theo}\label{main-thm-rec}
		{\rm (Theorem \ref{thm-rec})}
		Let $A$, $B$ and $C$ be three Artin algebras.
		Suppose that there is a recollement among the derived categories
		$\D{A\Modcat}$, $\D{B\Modcat}$ and $\D{C\Modcat}$$:$
		\begin{align}\label{main-rec}
			\xymatrixcolsep{4pc}\xymatrix{
				\D{B\Modcat} \ar[r]|{i_*=i_!} &\D{A\Modcat} \ar@<-2ex>[l]|{i^*} \ar@<2ex>[l]|{i^!} \ar[r]|{j^!=j^*}  &\D{C\Modcat}. \ar@<-2ex>[l]|{j_!} \ar@<2ex>[l]|{j_{*}}
			}
		\end{align}
		
		{\rm (1)(i)} If the recollement $(\ref{main-rec})$ extends one step downwards, then $\ITdist(C)\le \ITdist(A)$.
		
		{\rm (ii)} If the recollement $(\ref{main-rec})$ extends one step upwards, then $\ITdist(B)\le \ITdist(A)$.
		
		{\rm (2)} If the recollement $(\ref{main-rec})$ extends one step downwards and extends one step upwards, then $$\max\{\ITdist(B),\ITdist(C)\}\le \ITdist(A)\le \ITdist(B)+\ITdist(C)+1.$$
		
		{\rm (3)(i)} If the recollement $(\ref{main-rec})$ extends one step downwards and $\gd(B)<\infty$, then $$\ITdist(C)= \ITdist(A).$$
		
		{\rm (ii)} If $\gd(C)<\infty$, then $\ITdist(B)= \ITdist(A)$.
	\end{Theo}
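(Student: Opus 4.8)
The plan is to recast $\ITdist$ as an extension-generation datum inside the singularity category and then transport that datum through the recollement, which under the stated hypotheses restricts to the bounded level. The first step is a reformulation lemma: for an Artin algebra $A$, one has $\ITdist(A)\le m$ if and only if there is $V\in A\modcat$ with
\[
\Dsg(A)=\big\langle\,\add\{V[i]\mid i\in\mathbb{Z}\}\,\big\rangle_{m+1},
\]
the $(m+1)$-st extension closure formed inside $\Dsg(A)$. One direction turns the defining short exact sequences $0\to V_1\to V_0\to\Omega^n_AM\to 0$ into triangles and uses $\Omega^n_AM\cong M[-n]$ in $\Dsg(A)$; the other lifts a triangulated extension filtration back to $A\modcat$ by applying a syzygy, replacing $V$ by a finite direct sum of its syzygies. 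This reformulation is, in substance, what already underlies Theorem~\ref{main-thm-level}; from it one extracts the transport principle that does the real work: an essentially surjective triangle functor $F\colon\Dsg(A)\to\Dsg(A')$ forces $\ITdist(A')\le\ITdist(A)$, since $F$ commutes with shifts and sends $\add$ into $\add$, hence $(m+1)$-st extension closures into $(m+1)$-st extension closures; in particular a triangle equivalence forces $\ITdist(A)=\ITdist(A')$.

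For parts (1) and the two lower bounds in (2) I would invoke the restriction theory for recollements of derived module categories (Angeleri H\"ugel--K\"onig--Liu; cf.~\cite{akly17} and the preliminaries recalled above). If (\ref{main-rec}) extends one step downwards, then $j_!$ and $j^*=j^!$ restrict to triangle functors between the bounded derived categories of finitely generated modules, $j^*$ preserves perfect complexes, and $j^*j_!\cong\Id$ with $j_!$ taking values in $\Db{A\modcat}$; hence $j^*$ induces an essentially surjective triangle functor $\Dsg(A)\to\Dsg(C)$, and the transport principle yields $\ITdist(C)\le\ITdist(A)$, proving (1)(i). Part (1)(ii) is the mirror statement for the $i$-side: ``extends one step upwards'' makes $i^*$ and $i_*=i_!$ restrict to the bounded level with $i^*i_*\cong\Id$, giving an essentially surjective $\Dsg(A)\to\Dsg(B)$, hence $\ITdist(B)\le\ITdist(A)$. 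Together these are $\max\{\ITdist(B),\ITdist(C)\}\le\ITdist(A)$.

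For the upper bound in (2), when (\ref{main-rec}) extends in both directions the two restrictions above assemble into a recollement of singularity categories $\Dsg(B)\to\Dsg(A)\to\Dsg(C)$. Choose $V_B\in B\modcat$ and $V_C\in C\modcat$ realizing $\ITdist(B)$ and $\ITdist(C)$ through the reformulation lemma, and let $\mathcal U\subseteq\Dsg(A)$ be the shift-closed additive subcategory generated by $i_*(V_B)$ and $j_!(V_C)$. For any $X\in\Dsg(A)$ the gluing triangle $j_!j^*X\to X\to i_*i^*X\to(j_!j^*X)[1]$ has first term in $\langle\mathcal U\rangle_{\ITdist(C)+1}$ (apply $j_!$ to the extension filtration of $j^*X$ in $\Dsg(C)$) and third term in $\langle\mathcal U\rangle_{\ITdist(B)+1}$ (apply $i_*$ to that of $i^*X$ in $\Dsg(B)$), so by $\langle\mathcal U\rangle_a*\langle\mathcal U\rangle_b\subseteq\langle\mathcal U\rangle_{a+b}$ we get $X\in\langle\mathcal U\rangle_{\ITdist(B)+\ITdist(C)+2}$; thus $\Dsg(A)=\langle\mathcal U\rangle_{\ITdist(B)+\ITdist(C)+2}$ and the reformulation lemma gives $\ITdist(A)\le\ITdist(B)+\ITdist(C)+1$. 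The ``$+1$'' is precisely the gap between ``distance $m$'' and ``$(m+1)$ extension steps.''

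Part (3) is the degenerate case of the same picture. If in addition $\gd(B)<\infty$ then, together with ``extends one step downwards,'' the recollement restricts to a recollement $\Dsg(B)\to\Dsg(A)\to\Dsg(C)$ of singularity categories with $\Dsg(B)=0$, so $\Dsg(A)\to\Dsg(C)$ is a triangle equivalence and the transport principle gives $\ITdist(A)=\ITdist(C)$, which is (3)(i). For (3)(ii), $\gd(C)<\infty$ forces $\Db{C\modcat}=\per C$, which by the Angeleri H\"ugel--K\"onig--Liu criterion is already enough to make (\ref{main-rec}) restrict to the bounded level (the vanishing that ``extends one step upwards'' would otherwise supply is now automatic), hence to a recollement $\Dsg(B)\xrightarrow{i_*}\Dsg(A)\to\Dsg(C)=0$ in which $i_*$ is fully faithful with essential image all of $\Dsg(A)$, i.e.\ a triangle equivalence; the transport principle again gives $\ITdist(B)=\ITdist(A)$. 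The main obstacle is the first step: pinning down the reformulation lemma, and above all verifying that the recollement functors descend to the singularity categories with exactly the numerology claimed --- that is, controlling cohomological amplitudes and finite generation, which is what the ``extends one step up/down'' (and, in (3)(ii), ``$\gd(C)<\infty$'') conditions are engineered to guarantee. Once the recollement is known to live on singularity categories, the extension-closure bookkeeping is routine.
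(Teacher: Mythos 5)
Your outline is appealing, but it rests on two claims that, as stated, are too strong, and the gap between them and what you can actually prove is precisely where the work of the paper lies. The ``reformulation lemma'' asserts $\ITdist(A)\le m$ if and only if $\Dsg(A)=\langle\add\{V[i]:i\in\mathbb Z\}\rangle_{m+1}$ for some $V\in A\modcat$. The ``only if'' direction is fine (it is the translation you describe), but the ``if'' direction is not: the extension filtration in $\Dsg(A)$ for a given $X$ uses only finitely many shifts of $V$, yet nothing bounds those shifts uniformly as $X$ ranges over $A\modcat$. Without a uniform bound you cannot choose one $n$ and one finitely generated module $U$ serving as a simultaneous $(m,n)$-Igusa--Todorov module. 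The ``transport principle'' inherits the same defect, and is moreover stronger than anything the paper (or, to my knowledge, the literature) proves: if any essentially surjective triangle functor $\Dsg(A)\to\Dsg(A')$ forced $\ITdist(A')\le\ITdist(A)$, then $\ITdist$ — and with it syzygy-finiteness and the Igusa--Todorov property — would be invariant under \emph{arbitrary} singular equivalences, whereas the paper only establishes this under singular equivalence of Morita type with level (Theorem~\ref{thm-level}), and explicitly frames that as the accessible case.

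The reason your plan works in the recollement setting is that the relevant functors are not arbitrary: by Lemma~\ref{lem-rec-prop} the functors $i_*,i^*,i^!,j_!,j^*,j_*$ restrict (under the one-step hypotheses) to both $\Db{\rm mod}$ and $\Kb{\rm proj}$, and by Lemmas~\ref{lem-functor-proj} and \ref{lem-functor} they have bounded cohomological amplitude, i.e.\ they send $\mathscr{D}^{[p,q]}$ into $\mathscr{D}^{[s+p,t+q]}$ for fixed $s,t$. This amplitude control is exactly what replaces the missing uniformity in your reformulation, and it is why the paper never passes to $\Dsg$ at all: instead it introduces $\ITdist(\mathcal C)$ for subclasses $\mathcal C\subseteq\Db{A\modcat}$, proves the invariance statements $\ITdist(A)=\ITdist(A\modcat)=\ITdist(\mathscr{D}^{[s,t]}(A\modcat))$ (Lemmas~\ref{lem-it-equi-1}, \ref{lem-it-equi}), and then proves the transport statement in the form $\ITdist(F(\mathcal C))\le\ITdist(\mathcal C)$ for $F$ a triangle functor restricting to $\Kb{\rm proj}$ (Lemma~\ref{lemma-functor-it}), with explicit control of degrees. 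Your acknowledgment at the end — that one must verify the recollement functors ``descend with exactly the numerology claimed'' — is not a routine remaining step but the core of the proof; if you track amplitudes explicitly, you will essentially reproduce the paper's argument in $\Db{\rm mod}$ rather than in $\Dsg$. The parts of your outline that do not depend on the too-strong transport principle — the gluing triangle $j_!j^*X\to X\to i_*i^*X$ for the upper bound in (2), and the vanishing of $\Dsg(B)$ or $\Dsg(C)$ in (3) — match the paper's strategy.
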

	
	As we known, derived equivalences play an important role in the representation theory of Artin algebras and finite groups (see \cite{happel88, Xi18}). 
	Foundational results such as Rickard’s Morita theory for derived categories of rings (\cite{r1989}) and Keller’s work on differential graded algebras (\cite{k94}) have significantly advanced our understanding of the homological behavior shared by derived equivalent algebras.
	A number of homological invariants are known to be preserved under derived equivalences, including Hochschild homology (\cite{r91}), cyclic homology (\cite{Keller98}), algebraic $K$-theory (\cite{ds04}), and the number of non-isomorphic simple modules (\cite{r1989}). In many cases, derived equivalences impose upper bounds on homological dimensions via the length of the tilting complex realizing the equivalence. This behavior is observed, for instance, in the global dimension (\cite[Section 12.5(b)]{gr92}), the finitistic dimension (\cite{happel93, px09}), the extension dimension (\cite[Theorem 1.1]{zz24}), and the dominant dimension (\cite[Theorem 5.2]{fhk21}).
	However, such dimensions are not necessarily preserved exactly.
	In this work, we demonstrate that the Igusa–Todorov distance remains invariant under derived equivalences.
	
	\begin{Koro}{\rm (Corollary \ref{cor-der})}
		\label{main-cor-der}
		Let $A$ and $B$ be Artin algebras. If $A$ and $B$ are derived-equivalent, then $$\ITdist(A)=\ITdist(B).$$
	\end{Koro}
	
	The paper is organized as follows:
	In Section 2, we recall the necessary definitions, notations, and background.
	Section \ref{sect-def} introduces the Igusa–Todorov distance and establishes its upper bound.
	Section \ref{sect-st} proves Theorem \ref{main-st-thm} on stable equivalences.
	Section \ref{sect-t-level} establishes Theorem \ref{main-thm-level} on singular equivalences of Morita type with level.
	Section \ref{sect-t-rec} proves Theorem \ref{main-thm-rec} on recollements.
	Corollary \ref{main-cor-der} then follows as a consequence.
	
	\section{Preliminaries}
	In this section, we shall fix some notations, and recall some definitions.
	
	Throughout this paper, $k$ is an arbitrary but fixed commutative Artin ring. Unless stated otherwise, all algebras are Artin $k$-algebras with unit, and all modules are finitely generated unitary left modules; all categories will be $k$-categories and all functors are $k$-functors.
	
	Let $A$ be an Artin algebra and $\ell\ell(A)$ stand for the Loewy length of $A$. We denote by $A\Modcat$ the category of all left $A$-modules, by $A\modcat$ the category of all finitely generated left $A$-modules, by $\pmodcat{A}$ the category of all finitely generated projective $A$-modules, and by $\imodcat{A}$ the category of all finitely generated projective $A$-modules. All subcategories of $A\Modcat$ are full, additive and closed under isomorphisms.
	For a class $\mathcal{C}$ of $A$-modules, we write $\add(\mathcal{C})$ for the smallest full subcategory of $A\modcat$ containing $\mathcal{C}$ and closed under finite direct sums and direct summands.
	When $\mathcal{C}$ consists of only one module $C$, we write $\add(C)$ for $\add(\mathcal{C})$.
	In particular, $\add({_A}A)=\pmodcat{A}$.
	Let $M$ be an $A$-module.
	If $f:P\ra M$ is the projective cover of $M$ with $P$ projective, then the kernel of $f$ is called the \emph{syzygy} of $M$, denoted by $\Omega(M)$.
	Dually, if $g:M\ra I$ is the injective envelope of $M$ with $I$ injective, then the cokernel of $g$ is called the \emph{cosyzygy} of $M$, denoted by $\Omega^{-1}(M)$. Additionally, let $\Omega^0$ be the identity functor in $A\modcat$ and $\Omega^1:=\Omega$. Inductively, for any $n\ge 2$, define $\Omega^n(M):=\Omega^1(\Omega^{n-1}(M))$ and $\Omega^{-n}(M):=\Omega^{-1}(\Omega^{-n+1}(M))$.
	We denoted by $\pd(_AM)$ and $\id(_AM)$ the projective and injective dimension, respectively.
	
	Let $A^{\rm {op}}$ be the opposite algebra of $A$, and $A^e = A\otimes _k A^{\opp}$ be the enveloping algebra of $A$. We identify
	$A$-$A$-bimodules with left $A^e$-modules. Let $D:=\Hom_k(-,E(k/\rad(k)))$ the usual duality from $A\modcat$ to $A^{\rm {op}}\modcat$, where $\rad(k)$ denotes the radical of $k$ and $E(k/\rad(k))$ denotes the injective envelope of $k/\rad(k)$. The duality $\Hom_A(-, A)$ from $\pmodcat{A}$ to $\pmodcat{A\opp}$ is denoted by $^*$, namely for each projective $A$-module $P$, the projective $A\opp$-module $\Hom_A(P, A)$ is written as $P^*$. We write $\nu_A$ for the Nakayama functor $D\Hom_A(-,A): \pmodcat{A}\ra \imodcat{A}$.

	\medskip
	Let $\cal C$ be an additive category. For two morphisms $f:X\rightarrow Y$ and $g:Y\rightarrow Z$ in $\cal C$, their composition is denoted by $fg$, which is a morphism from $X$ to $Z$. But for two functors $F:\mathcal{C}\ra \mathcal{D}$ and $G:\mathcal{D}\ra\mathcal{E}$ of categories, their composition is written as $GF$.
	
	Suppose $\mathcal{C}\subseteq A\Modcat$. A (co)chain complex $\cpx{X}=(X^i, d_X^i)$ over $\mathcal{C}$ is a sequence of objects $X^i$ in
	$\cal C$ with morphisms $d_{\cpx{X}}^{i}:X^i\ra X^{i+1}$ such that $d_{\cpx{X}}^{i}d_{\cpx{X}}^{i+1}=0$ for all $i \in {\mathbb Z}$.
	A (co)chain map $f$ from $\cpx{X}=(X^i, d_X^i)$ to $\cpx{Y}=(Y^i, d_Y^i)$, is a set of maps $f=\{f^i:X^i\ra Y^i\mid i\in \mathbb{Z}\}$ such that $f^id^i_{\cpx{Y}}=d^i_{\cpx{X}}f^{i+1}$. For a complex $\cpx{X}$, the complex $\cpx{X}[1]$ is obtained from $\cpx{X}$ by shifting $\cpx{X}$ to the left by one degree, and the complex $\cpx{X}[-1]$ is obtained from $\cpx{X}$ by shifting $\cpx{X}$ to the right by one degree. For $n\in \mathbb{Z}$, we denoted by $\cpx{X}_{\le n}$ (respectively, $\cpx{X}_{\ge n}$) the brutal trucated complex which is obtained from the given complex $\cpx{X}$ by replacing each $X^i$ with $0$ for $i>n$ (respectively, $i<n$).
	A complex $\cpx{X}=(X^i, d_X^i)$
	is called \textit{bounded above} (respectively, \textit{bounded below}) if $X^{i} = 0$ for all but finitely many positive (respectively, negative) integers $i$. A complex $\cpx{X}$ is called \textit{bounded} if it is both bounded above and bounded below, equivalently,
	$X^{i} = 0$ for all but finitely many $i$. A complex $\cpx{X}$ is called \textit{cohomologically bounded} if all but finitely many cohomologies of $\cpx{X}$ are zero.
	
	We denote by $\C{\mathcal C}$  the category of all complexes over $\cal C$ with chain map, by $\K{\mathcal C}$ the homotopy category of complexes over $\mathcal{C}$, by $\mathscr{K}^{-,b}(\mathcal C)$ the homotopy category of bounded above complexes over $\mathcal{C}$, by $\mathscr{K}^{+,b}(\mathcal C)$ the homotopy category of bounded below complexes over $\mathcal{C}$, and by $\mathscr{K}^{b}(\mathcal C)$ the homotopy category of bounded complexes over $\mathcal{C}$. Let $I$ be a subset of $\mathbb{Z}$. We denoted by $\mathscr{K}^{I}(\mathcal C)$ the subcategory of $\mathscr{K}(\mathcal C)$ consisting of complexes with the $i$-th component is $0$ for each $i\notin I$. For instance, $\mathscr{K}^{\{0\}}(\mathcal C)=\mathcal{C}$.
	If $\cal C$ is an abelian category, then let $\D{\cal C}$ be the derived category of complexes over $\cal C$, $\Db{\mathcal C}$ be the full subcategory of $\D{\cal C}$ consisting of cohomologically bounded complexes over $\mathcal{C}$, and $\mathscr{D}^{I}(\mathcal C)$ be the subcategory of $\D{\mathcal C}$ consisting of complexes with the $i$-th cohomology is $0$ for any $i\in I$. For instance, $\mathscr{D}^{\{0\}}(\mathcal C)=\mathcal{C}$.
	
	Let $A$ be an Artin algebra. For convenience, we do not distinguish $\Kb{\pmodcat{A}}$, $\Kb{\imodcat{A}}$ and $\Db{A\modcat}$ from their essential images under the canonical full embeddings into $\D{A\Modcat}$. Furthermore, we always identify $A\Modcat$ with the full subcategory of $\D{A\Modcat}$ consisting of all stalk complexes concentrated on degree $0$.
	
	In this paper, all functors between triangulated categories are assumed to be triangle functors.
	
	\subsection{Syzygies in derived categories}
	In this section, we recall some basic and self-contained facts on syzygy complexes for later use, as detailed in reference \cite{ai07,wei17}.
	
	Let $A$ be an Artin algebra.
	A homomorphism $f: X\ra Y$ of $A$-modules is called a \emph{radical homomorphism} if, for any indecomposable module $Z$ and homomorphisms $h: Z\ra X$ and $g: Y\ra Z$, the composition $hfg$ is not an isomorphism.
	For a complex $(X^i, d_{\cpx{X}}^i)$ over $A\modcat$, if all $d_{\cpx{X}}^i$ are radical homomorphisms, then it is called a \emph{radical complex}, which has the following properties.
	\begin{Lem}\label{lem-radical}
		{\rm (\cite[pp. 112-113]{hx10})}
		Let $A$ be an Artin algebra.
		
		$(1)$ Every complex over $A\modcat$ is isomorphic to a radical complex in $\K{A\modcat}$.
		
		$(2)$ Two radical complexes $\cpx{X}$ and $\cpx{Y}$ are isomorphic in $\K{A\modcat}$ if and only if they are isomorphic in $\C{A\modcat}$.
	\end{Lem}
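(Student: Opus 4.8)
The plan is to rely on two standard facts about the module category of an Artin algebra $A$: that $A\modcat$ is a Krull--Schmidt category, and that the radical homomorphisms form a two-sided ideal $\rad$ of $A\modcat$ for which $\rad(M,M)$ coincides with the Jacobson radical of the semiperfect ring $\End_A(M)$. In particular, composing a radical homomorphism with any homomorphism (on either side) again yields a radical homomorphism, and $\Id_M-\varphi$ is an automorphism of $M$ whenever $\varphi\in\rad(M,M)$.

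For part $(1)$, start with a complex $\cpx{X}=(X^i,d^i)$ and suppose some $d^i$ is not a radical homomorphism. By the Krull--Schmidt property there are indecomposable direct summands $Z$ of $X^i$ and $Z'$ of $X^{i+1}$ such that the component of $d^i$ between them is an isomorphism; after adjusting the chosen decompositions one may assume $d^i$ is block-diagonal with this isomorphism as one block, so that the subcomplex $\bigl(\cdots\to 0\to Z\xrightarrow{\sim}Z'\to 0\to\cdots\bigr)$, concentrated in degrees $i$ and $i+1$, is a direct summand of $\cpx{X}$ in $\C{A\modcat}$. This summand is contractible, so splitting it off changes $\cpx{X}$ neither in $\K{A\modcat}$ up to isomorphism nor, away from degrees $i$ and $i+1$, as a complex; and because $\rad$ is an ideal, the differentials adjacent to those degrees are merely pre- or post-composed with split mono- or epimorphisms, so no differential that was already radical is destroyed. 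Since $X^i$ has finite length, iterating this reduction (it only ever touches three consecutive degrees, hence is locally finite) terminates and yields a radical complex isomorphic to $\cpx{X}$ in $\C{A\modcat}$, and therefore in $\K{A\modcat}$.

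For part $(2)$, one direction is trivial, so let $f\colon\cpx{X}\to\cpx{Y}$ be a homotopy equivalence of radical complexes with homotopy inverse $g$. In each degree, $\Id_{X^i}-(gf)^i$ is a sum of two homomorphisms each of which factors through a differential of $\cpx{X}$; since those differentials are radical and $\rad$ is an ideal, $\Id_{X^i}-(gf)^i\in\rad(X^i,X^i)$, whence $(gf)^i$ is an isomorphism. Thus $gf$ is an isomorphism in $\C{A\modcat}$, and symmetrically $fg$ is an isomorphism in $\C{A\modcat}$; a morphism possessing both a left and a right inverse is invertible, so $f$ is an isomorphism in $\C{A\modcat}$.

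The genuine content --- and the only place Artinianness is used --- is the ring-theoretic input that $\rad(M,M)$ is the Jacobson radical of $\End_A(M)$, so that $\Id_M-\varphi$ is invertible for radical $\varphi$ and, together with the Krull--Schmidt decomposition of each $X^i$, controls both the termination in $(1)$ and the degreewise invertibility in $(2)$; granting this, the remainder is bookkeeping with block forms of the differentials.
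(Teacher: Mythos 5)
Your proof is correct, and it follows the standard ``split off contractible two-term summands'' argument --- the paper itself supplies no proof but cites Hu--Xi, who argue in essentially this way, so there is no genuine divergence of approach to report. The key inputs are exactly as you isolate them: each $\End_A(X^i)$ is semiperfect with Jacobson radical $\rad(X^i,X^i)$, Krull--Schmidt gives the indecomposable decompositions, and $\rad$ being a two-sided ideal makes the bookkeeping go through.

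One slip you should fix: at the end of part $(1)$ you write that the iteration ``yields a radical complex isomorphic to $\cpx{X}$ in $\C{A\modcat}$, and therefore in $\K{A\modcat}$.'' The implication is in the wrong direction and the first claim is false --- the radical complex produced is a proper direct summand of $\cpx{X}$ in $\C{A\modcat}$ whose complement is contractible (a one-term contractible complex $0\to M\xrightarrow{\Id}M\to 0$ is not isomorphic to $0$ in $\C{A\modcat}$). What is true is that $\cpx{X}$ decomposes in $\C{A\modcat}$ as (radical part) $\oplus$ (contractible part), and since the contractible part, even if it is an infinite direct sum of two-term contractibles, is itself contractible, the radical part is isomorphic to $\cpx{X}$ in $\K{A\modcat}$. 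This is also the clean way to justify the ``locally finite'' passage to the limit for unbounded complexes: you are not really claiming the process terminates, but that each $X^i$ and $d^i$ stabilize after finitely many steps, and the stabilized complex is the complement of a contractible direct summand of $\cpx{X}$. A second, smaller point worth spelling out is why the two-term subcomplex you extract is a summand of the whole complex and not just of $d^i$: after block-diagonalizing $d^i$, the relations $d^{i-1}d^i=0$ and $d^id^{i+1}=0$, together with the isomorphism block, force the component of $d^{i-1}$ landing in $Z$ and the component of $d^{i+1}$ emanating from $Z'$ to vanish, so the adjacent differentials automatically respect the splitting. You implicitly rely on this when you say the adjacent differentials are only pre- or post-composed with split monos or epis; it deserves a sentence.
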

	
	Recall that $\Db{A\modcat}$ is equivalent to $\Kfb{\pmodcat{A}}$ as triangulated categories. For a complex $\cpx{X}\in \Db{A\modcat}$, a \textit{minimal projective resolution} of $\cpx{X}$ is a radical complex $\cpx{P}\in\Kfb{\pmodcat{A}}$ such that $\cpx{P}\simeq \cpx{X}$ in $\D{A\modcat}$. In case $X$ is an $A$-module, $\cpx{P}$ is just the usual minimal projective resolution of $X$.
	
	\begin{Def}{\rm (\cite[Section 1.3]{ai07})}
		{\rm
			Let $\cpx{X}\in \Db{A\modcat}$ and $n\in \mathbb{Z}$. Let $\cpx{P}$ be a minimal projective resolution of $\cpx{X}$. We say that a complex in $\Db{A\modcat}$ is an $n$\textit{-th syzygy} of $\cpx{X}$ provied that it is isomorphic to $\cpx{P}_{\le -n}[-n]$ in $\D{A\modcat}$. In this case, the $n$-th syzygy of $\cpx{X}$ is denoted by $\Omega_{\mathscr{D}}^n(\cpx{X}_{\cpx{P}})$, or simply by $\Omega_{\mathscr{D}}^n(\cpx{X})$ if there is no danger of confusion.}
	\end{Def}
	\begin{Rem}
		Given a complex $\cpx{X}\in \Db{A\modcat}$, the definition of the $n$-th syzygy of $\cpx{X}$  we adopt here differs slightly from Wei's definition. We take the projective resolution of $\cpx{X}$ to be minimal. According to Lemma \ref{lem-radical}, we know that the $n$-th syzygy of $\cpx{X}$ is unique uo to isomorphism.
	\end{Rem}

	\begin{Bsp}{\rm (\cite[Example 3.2]{wei17})}{\rm
			Let $\cpx{X}$ be a complex in $\Db{A\modcat}$. Suppose that $\cpx{P}$ is a minimal projective resolution of $\cpx{X}$. Assume that $\cpx{P}$ is a complex of the form
			$$\cdots \lra P^{-(n+1)}\lra P^{-n}\lra \cdots\lra P^{-1}\lra P^0\lra P^{1}\lra \cdots \lra P^{m-1}\lra P^{m}\lra 0,$$
			where $P^0$ is at the $0$-th position and $n,m>0$. Then
			
			{\rm (1)} $\Omega_{\mathscr{D}}^n(\cpx{X})$ is a complex of the form
			$$\cdots \lra P^{-(n+1)}\lra P^{-n}\lra 0,$$
			where $P^{-n}$ is at the $0$-th position.
			
			{\rm (2)} $\Omega_{\mathscr{D}}^{-1}(\cpx{X})$ is a complex of the form
			$$\cdots \lra P^{-(n+1)}\lra P^{-n}\lra \cdots\lra P^{-1}\lra P^0\lra P^{1}\lra 0,$$
			where $P^{1}$ is at the $0$-th position.
			
			{\rm (3)} $\Omega_{\mathscr{D}}^{-m}(\cpx{X})$ is a complex of the form
			$$\cdots \lra P^{-(n+1)}\lra P^{-n}\lra \cdots\lra P^{-1}\lra P^0\lra P^{1}\lra \cdots \lra P^{m-1}\lra P^{m}\lra 0,$$
			where $P^{m}$ is at the $0$-th position.
			
			{\rm (4)} $\Omega_{\mathscr{D}}^{-(m+1)}(\cpx{X})$ is a complex of the form
			$$\cdots \lra P^{-(n+1)}\lra P^{-n} \lra \cdots \lra P^0\lra P^{1}\lra \cdots \lra P^{m-1}\lra P^{m}\lra 0\lra 0,$$
			where the first $0$ after $P^{m}$ is at the $0$-th position.
		}
	\end{Bsp}
	
	Let $X$ be an $A$-module and $\cpx{P}$ be the minimal projective resolution of $X$. Then the brutal truncated complex $\cpx{P}_{\le -n}[-n]$ is just the minimal projective resolution of the $n$-th syzygy of $X$ for $n>0$. Thus syzygies of $X$ coincide with the usual syzygies in module categories.
	
	The following list some basic properties of syzygy complexes in \cite{wei17}.
	
	\begin{Lem}\label{lem-prop-omega}
		{\rm (\cite[Lemma 3.3]{wei17})}
		Let $\cpx{X}, \cpx{Y}\in\Db{A\modcat}$, and $m,n,s,t$ be integers. Let $\cpx{P}$ be the minimal projective resolutions of $\cpx{X}$. Then
		
		{\rm (1)} $\Omega_{\mathscr{D}}^n(\cpx{X})\in \mathscr{D}^{(-\infty,0]}(A\modcat)$ and $\Hom_{\Db{A\modcat}}(Q,\Omega_{\mathscr{D}}^n(\cpx{X})[i])=0$ for any projective $A$-module $Q$ and any integer $i>0$.
		
		{\rm (2)}
		If $\cpx{X}\in \mathscr{D}^{[s,t]}(A\modcat)$ for some integers $s\le t$, then $\Omega_{\mathscr{D}}^n(\cpx{X})\in \mathscr{D}^{[0,0]}(A\modcat),$ $\mathscr{D}^{[s+n,0]}(A\modcat),$ $\mathscr{D}^{[s+n,t+n]}(A\modcat)$ for cases $n\ge -s$, $-t\le n\le -s$, $n\le -t$, respectively. In particular, $\Omega_{\mathscr{D}}^{-s}(\cpx{X})$ is isomorphic to an $A$-module and $\cpx{X}\simeq \Omega_{\mathscr{D}}^{-t}(\cpx{X})[-t]$.
		
		{\rm (3)} $\Omega_{\mathscr{D}}^{n+m}(\cpx{X}[m])\simeq \Omega_{\mathscr{D}}^n(\cpx{X})$.
		
		{\rm (4)} $\Omega_{\mathscr{D}}^{n+m}(\cpx{X})\simeq \Omega_{\mathscr{D}}^m(\Omega_{\mathscr{D}}^n(\cpx{X}))$ for $m\ge 0$.
		
		%
		{\rm (5)}
		$\cpx{X}\in \Kb{\pmodcat{A}}$ if and only if any/some syzygy of $\cpx{X}$ is also in $\Kb{\pmodcat{A}}$.
		
		{\rm (6)} $\Omega_{\mathscr{D}}^n(\cpx{X}\oplus \cpx{Y})\simeq \Omega_{\mathscr{D}}^n(\cpx{X})\oplus \Omega_{\mathscr{D}}^n(\cpx{Y})$.
	\end{Lem}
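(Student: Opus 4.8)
The plan is to reduce every clause to explicit bookkeeping on the minimal projective resolution $\cpx{P}$ of $\cpx{X}$, after isolating the two facts that do the real work. The first is purely formal: brutal truncation and shift carry radical complexes to radical complexes, and a bounded-above radical complex of projectives is, by Lemma \ref{lem-radical}, the unique minimal projective resolution of the derived object it represents; in particular $\Omega_{\mathscr{D}}^n(\cpx{X})=\cpx{P}_{\le -n}[-n]$ is already its own minimal projective resolution, and it is concentrated in cohomological degrees $\le 0$. The second fact, which I expect to be the only genuinely non-formal point, is that if $\cpx{X}\in\mathscr{D}^{[s,t]}(A\modcat)$ then $P^i=0$ for all $i>t$. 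I would prove this by contradiction: take the top nonzero term $P^j$ with $j>t$; then $H^j(\cpx{P})=0$ forces $d^{j-1}\colon P^{j-1}\to P^j$ to be a split epimorphism onto a nonzero projective, and a splitting produces an indecomposable summand $Z$ of $P^j$ for which the composite $Z\hookrightarrow P^{j-1}\xrightarrow{d^{j-1}}P^j\twoheadrightarrow Z$ is an isomorphism, contradicting that $d^{j-1}$ is a radical homomorphism. This is exactly where minimality of $\cpx{P}$ is used.

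With these in hand, parts (3), (4) and (6) are pure degree-chasing. For (3) one notes that $\cpx{P}[m]$ is the minimal projective resolution of $\cpx{X}[m]$ and that $(\cpx{P}[m])_{\le -(n+m)}=(\cpx{P}_{\le -n})[m]$, so the shift $[-(n+m)]$ returns $\cpx{P}_{\le -n}[-n]$. For (4), since $\Omega_{\mathscr{D}}^n(\cpx{X})$ is its own minimal projective resolution and sits in degrees $\le 0$, for $m\ge 0$ the truncation $(\cpx{P}_{\le -n}[-n])_{\le -m}$ equals $(\cpx{P}_{\le -(n+m)})[-n]$, and applying $[-m]$ yields $\Omega_{\mathscr{D}}^{n+m}(\cpx{X})$; the hypothesis $m\ge 0$ enters precisely here, since for $m<0$ the outer truncation would reinstate previously deleted terms. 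For (6), $\cpx{P}\oplus\cpx{Q}$ is again radical, because a block-diagonal map with radical diagonal blocks is radical — the endomorphism ring of an indecomposable is local, so a sum of two non-isomorphisms is a non-isomorphism — hence it is the minimal projective resolution of $\cpx{X}\oplus\cpx{Y}$, and truncation and shift both commute with finite direct sums.

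For (1), the containment $\Omega_{\mathscr{D}}^n(\cpx{X})\in\mathscr{D}^{(-\infty,0]}(A\modcat)$ is immediate from $\cpx{P}_{\le -n}[-n]$ being concentrated in degrees $\le 0$; and for projective $Q$ one has $\Hom_{\Db{A\modcat}}(Q,\Omega_{\mathscr{D}}^n(\cpx{X})[i])=\Hom_{\K{\pmodcat{A}}}(Q,\Omega_{\mathscr{D}}^n(\cpx{X})[i])$, which is $0$ for $i>0$ since a chain map out of the degree-$0$ stalk $Q$ is a degree-$0$ cocycle in a complex that vanishes in degree $0$. For (2), I would compute the cohomology of $\cpx{P}_{\le -n}$ directly: it agrees with that of $\cpx{P}$ in degrees $<-n$ and contributes at most a cokernel in degree $-n$, so it is supported in $[s,-n]$ when $-n\le t$, whereas $\cpx{P}_{\le -n}=\cpx{P}$ once $-n\ge t$ by the vanishing above; shifting by $[-n]$ and separating the ranges $n\ge -s$, $-t\le n\le -s$, $n\le -t$ gives the three stated conclusions (a complex of projectives with cohomology only in degree $0$ being isomorphic in $\D{A\modcat}$ to that cohomology module), and the cases $n=-s$, $n=-t$ give the ``in particular'' clause. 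Finally, for (5): $\cpx{X}\in\Kb{\pmodcat{A}}$ if and only if its radical projective resolution $\cpx{P}$ is bounded (uniqueness of radical representatives once more), and since $\cpx{P}$ is bounded above and coincides with $\cpx{P}_{\le -n}$ in all degrees $\le -n$, this is equivalent to boundedness of $\Omega_{\mathscr{D}}^n(\cpx{X})=\cpx{P}_{\le -n}[-n]$ for some, equivalently any, integer $n$. The only step demanding more than bookkeeping is thus the degree-$t$ vanishing used in (2); once one commits to always representing a derived object by its unique radical projective resolution, everything else is forced.
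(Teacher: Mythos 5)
The paper does not prove this lemma; it quotes it verbatim from Wei's paper \cite[Lemma 3.3]{wei17} and uses it as a black box. So there is no in-paper proof to compare against, but your blind proof is correct and is the natural argument: everything is reduced to bookkeeping on the unique minimal (radical) projective resolution $\cpx{P}$, using Lemma \ref{lem-radical} to justify uniqueness and the fact that truncation and shift preserve radicality. You correctly identify the one non-formal step, namely that $\cpx{X}\in\mathscr{D}^{[s,t]}$ forces $P^j=0$ for $j>t$, and your contradiction argument (the top differential of a minimal complex cannot be a split epimorphism onto a nonzero projective, since an indecomposable summand would give an isomorphism factoring through $d^{j-1}$) is exactly the standard use of minimality. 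Parts (1), (3), (5), (6) then follow by degree inspection as you say; your argument for (6) correctly notes that a direct sum of radical complexes is radical because endomorphism rings of indecomposables are local. One small imprecision: in (4) you say that for $m<0$ the outer truncation ``would reinstate previously deleted terms''; more accurately, for $m<0$ the outer truncation $(-)_{\le -m}$ removes nothing from a complex concentrated in degrees $\le 0$, so the composite produces $\Omega_{\mathscr{D}}^n(\cpx{X})[-m]$ rather than $\Omega_{\mathscr{D}}^{n+m}(\cpx{X})$ — the deleted terms $P^{-n+1},\dots$ cannot be recovered because truncation is lossy. The conclusion that $m\ge 0$ is needed is still right, and the rest of the proposal is sound.
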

	
	\begin{Lem}\label{lem-x-tri}
		{\rm (\cite[Lemma 3.4]{wei17})}
		Let $\cpx{X}\in \Db{A\modcat}$. Then there is a triangle $\Omega_{\mathscr{D}}^{n+1}(\cpx{X})[n-m]\ra \cpx{Y}\ra \Omega_{\mathscr{D}}^m(\cpx{X})\ra \Omega_{\mathscr{D}}^{n+1}(\cpx{X})[n-m+1],$ where $\cpx{Y}\in \mathscr{K}^{[m-n,0]}(\pmodcat{A})$ and $n\ge m$. In particular, for each $n\in \mathbb{Z}$, there is a triangle $\Omega_{\mathscr{D}}^{n+1}(\cpx{X})\ra Q\ra \Omega_{\mathscr{D}}^n(\cpx{X})\ra \Omega_{\mathscr{D}}^{n+1}(\cpx{X})[1]$ with $Q$ projective.
	\end{Lem}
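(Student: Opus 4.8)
The plan is to make everything explicit through one minimal projective resolution of $\cpx{X}$ and to obtain the triangle from a short exact sequence of brutally truncated complexes.

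First I would fix a minimal projective resolution $\cpx{P}\in\Kfb{\pmodcat{A}}$ of $\cpx{X}$, so that by definition $\Omega_{\mathscr{D}}^{m}(\cpx{X})\simeq \cpx{P}_{\le -m}[-m]$ and $\Omega_{\mathscr{D}}^{n+1}(\cpx{X})\simeq \cpx{P}_{\le -(n+1)}[-(n+1)]$ in $\D{A\modcat}$. Since $n\ge m$ we have $-(n+1)\le -m-1$, so there is a surjective chain map $\cpx{P}_{\le -m}\lra \cpx{P}_{\le -(n+1)}$ that is the identity in degrees $\le -(n+1)$ and zero in higher degrees; the only point to verify is that this is a chain map at degree $-(n+1)$, where the differential of $\cpx{P}_{\le -m}$ lands in a component that has been set to zero in $\cpx{P}_{\le -(n+1)}$, so the relevant square still commutes. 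Its kernel is the bounded complex of projectives
$$\cpx{Z}:=\big(0\lra P^{-n}\lra P^{-n+1}\lra\cdots\lra P^{-m}\lra 0\big)\in\mathscr{K}^{[-n,-m]}(\pmodcat{A})$$
with $P^{-n}$ in degree $-n$, so one gets a short exact sequence of complexes $0\to\cpx{Z}\to\cpx{P}_{\le -m}\to\cpx{P}_{\le -(n+1)}\to 0$ and hence a distinguished triangle $\cpx{Z}\to\cpx{P}_{\le -m}\to\cpx{P}_{\le -(n+1)}\to\cpx{Z}[1]$ in $\D{A\modcat}$.

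Next I would apply the shift $[-m]$ and put $\cpx{Y}:=\cpx{Z}[-m]$; then $\cpx{Y}\in\mathscr{K}^{[m-n,0]}(\pmodcat{A})$, while $\cpx{P}_{\le -m}[-m]=\Omega_{\mathscr{D}}^{m}(\cpx{X})$ and $\cpx{P}_{\le -(n+1)}[-m]\simeq\Omega_{\mathscr{D}}^{n+1}(\cpx{X})[n+1-m]=\Omega_{\mathscr{D}}^{n+1}(\cpx{X})[n-m+1]$. So the triangle becomes
$$\cpx{Y}\lra \Omega_{\mathscr{D}}^{m}(\cpx{X})\lra \Omega_{\mathscr{D}}^{n+1}(\cpx{X})[n-m+1]\lra \cpx{Y}[1],$$
and rotating it one step backwards yields
$$\Omega_{\mathscr{D}}^{n+1}(\cpx{X})[n-m]\lra \cpx{Y}\lra \Omega_{\mathscr{D}}^{m}(\cpx{X})\lra \Omega_{\mathscr{D}}^{n+1}(\cpx{X})[n-m+1],$$
which is the asserted triangle. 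The ``in particular'' statement then follows by taking $m=n$: now $\cpx{Y}\in\mathscr{K}^{[0,0]}(\pmodcat{A})=\pmodcat{A}$ is a projective module $Q$ (concretely $Q=P^{-n}$), and the triangle reads $\Omega_{\mathscr{D}}^{n+1}(\cpx{X})\to Q\to\Omega_{\mathscr{D}}^{n}(\cpx{X})\to\Omega_{\mathscr{D}}^{n+1}(\cpx{X})[1]$.

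There is no serious obstacle: the whole argument is the observation that ``$\Omega_{\mathscr{D}}$ is brutal truncation of the minimal projective resolution'' turns the desired triangle into a short exact sequence of truncated complexes. The only thing needing care is the bookkeeping of truncation and shift indices — in particular, checking that the natural degreewise maps between brutal truncations really are chain maps (the subtlety being at the truncation degree), and tracking the shift exponents so that the connecting morphism lands in $\Omega_{\mathscr{D}}^{n+1}(\cpx{X})[n-m+1]$ rather than a neighbouring shift.
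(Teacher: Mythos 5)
Your proof is correct and complete. The paper simply cites this result from \cite[Lemma 3.4]{wei17} without reproducing an argument, and what you wrote is the standard proof: the brutal truncations of a minimal projective resolution fit into a degreewise split short exact sequence $0\to\cpx{Z}\to\cpx{P}_{\le -m}\to\cpx{P}_{\le -(n+1)}\to 0$ (the boundary check at degree $-(n+1)$ you made is exactly the point that needs verifying), giving a triangle in $\D{A\modcat}$; after the shift $[-m]$ and one rotation this is the asserted triangle, and taking $m=n$ gives the ``in particular'' clause with $Q=P^{-n}$.
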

	
	\begin{Lem}\label{lem-omega-shift}
		{\rm (\cite[Proposition 3.8]{wei17})} Let $\cpx{X}\ra \cpx{Y}\ra \cpx{Z}\ra \cpx{X}[1]$ be a triangle in $\Db{A\modcat}$, and $n$ be an integer. Then there is a trianle $\Omega_{\mathscr{D}}^n(\cpx{X})\ra \Omega_{\mathscr{D}}^n(\cpx{Y})\oplus P\ra \Omega_{\mathscr{D}}^n(\cpx{Z})\ra \Omega_{\mathscr{D}}^n(\cpx{X})[1]$ for some projective $A$-module $P$.
	\end{Lem}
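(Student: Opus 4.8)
The plan is to lift the given triangle to a termwise-split short exact sequence of complexes of projective modules whose two outer terms are the minimal projective resolutions of $\cpx{X}$ and $\cpx{Z}$, and then apply brutal truncation degreewise. First I would fix minimal projective resolutions $\cpx{P}_X$ and $\cpx{P}_Z$ of $\cpx{X}$ and $\cpx{Z}$. Rotating the given triangle yields a morphism $\cpx{Z}[-1]\to\cpx{X}$ in $\Db{A\modcat}$; since $\Db{A\modcat}\simeq\Kfb{\pmodcat{A}}$ and morphisms in $\Kfb{\pmodcat{A}}$ are homotopy classes of genuine chain maps, this morphism is realized by a chain map $g\colon\cpx{P}_Z[-1]\to\cpx{P}_X$. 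Put $\cpx{P}_Y:=\mathrm{Cone}(g)$. Then $\cpx{P}_Y$ is again a bounded-above complex of projectives with bounded cohomology, it is a (in general non-minimal) projective resolution of $\cpx{Y}$, the mapping-cone sequence
\[
0\lra \cpx{P}_X\lraf{\iota}\cpx{P}_Y\lraf{\pi}\cpx{P}_Z\lra 0
\]
is termwise split, and the distinguished triangle it induces is isomorphic to $\cpx{X}\to\cpx{Y}\to\cpx{Z}\to\cpx{X}[1]$ (here one uses that $g$ represents the rotated connecting morphism and that the completion of a morphism to a triangle is unique up to isomorphism).

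Next, since the brutal truncation $(-)_{\le -n}$ followed by the shift $[-n]$ preserves termwise-split short exact sequences of complexes, applying it to the sequence above produces a distinguished triangle
\[
\Omega_{\mathscr{D}}^n(\cpx{X})\lra (\cpx{P}_Y)_{\le -n}[-n]\lra \Omega_{\mathscr{D}}^n(\cpx{Z})\lra \Omega_{\mathscr{D}}^n(\cpx{X})[1],
\]
where the outer terms are identified by the minimality of $\cpx{P}_X$ and $\cpx{P}_Z$. It then remains to identify the middle term. Using Lemma \ref{lem-radical} together with the standard fact that a bounded-above complex of projectives splits off a maximal contractible summand, write $\cpx{P}_Y\cong\cpx{P}_Y^{\min}\oplus\cpx{C}$ in $\C{\pmodcat{A}}$, with $\cpx{P}_Y^{\min}$ the minimal projective resolution of $\cpx{Y}$ and $\cpx{C}$ a contractible complex of projectives. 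Then $(\cpx{P}_Y)_{\le -n}[-n]\cong\Omega_{\mathscr{D}}^n(\cpx{Y})\oplus(\cpx{C})_{\le -n}[-n]$. Since $\cpx{C}$ is a direct sum of two-term complexes of the form $Q\lraf{\mathrm{id}}Q$, a direct inspection shows that $(\cpx{C})_{\le -n}[-n]$ is the direct sum of a contractible complex of projectives and the stalk complex in degree $0$ given by the lower term of the summand of $\cpx{C}$ sitting in degrees $-n,-n+1$; hence $(\cpx{C})_{\le -n}[-n]\cong P$ in $\Db{A\modcat}$ for some projective $A$-module $P$. Substituting this isomorphism into the triangle above gives the asserted triangle $\Omega_{\mathscr{D}}^n(\cpx{X})\to\Omega_{\mathscr{D}}^n(\cpx{Y})\oplus P\to\Omega_{\mathscr{D}}^n(\cpx{Z})\to\Omega_{\mathscr{D}}^n(\cpx{X})[1]$.

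The point that needs care is the first step: realizing the given triangle, up to isomorphism, by a termwise-split short exact sequence of complexes of projectives whose outer two terms are \emph{simultaneously} the minimal resolutions of $\cpx{X}$ and $\cpx{Z}$. The mapping cone of a chain-map representative of the rotated connecting morphism achieves this, but one must check that the induced triangle matches the original one. Everything else is bookkeeping, and the fact that the middle resolution $\cpx{P}_Y=\mathrm{Cone}(g)$ need not be minimal is exactly what produces the extra projective summand $P$ after truncation; in particular for $n=0$ one recovers the original triangle with $P=0$.
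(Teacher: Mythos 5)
The paper does not reproduce a proof of this lemma; it simply cites \cite[Proposition 3.8]{wei17}. Your argument is a correct reconstruction, and it follows the standard route one would expect Wei's proof to take: lift the connecting morphism $\cpx{Z}\to\cpx{X}[1]$ to a chain map between minimal projective resolutions, take its cone to obtain a termwise-split short exact sequence of bounded-above complexes of projectives
$0\to\cpx{P}_X\to\mathrm{Cone}(g)\to\cpx{P}_Z\to 0$
whose outer terms are minimal, then apply the (exactness-preserving, termwise-split-preserving) brutal truncation $(-)_{\le -n}[-n]$.

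All the steps check out. The identification of the induced triangle with the original one is justified because $g$ is chosen to represent (a rotation of) the connecting morphism and triangles on a fixed morphism are unique up to isomorphism. Truncation indeed sends a termwise-split short exact sequence to a termwise-split one, hence to a distinguished triangle in $\K{\pmodcat{A}}\subseteq\Db{A\modcat}$. The decomposition $\cpx{P}_Y\cong\cpx{P}_Y^{\min}\oplus\cpx{C}$ with $\cpx{C}$ contractible is available by Lemma \ref{lem-radical}, and your case analysis of $(\cpx{C})_{\le -n}[-n]$ is complete: two-term summands $Q\lraf{\mathrm{id}}Q$ of $\cpx{C}$ lying entirely in degrees $\le -n$ remain contractible, those entirely in degrees $\ge -n+1$ vanish, and those in degrees $-n,-n+1$ contribute a stalk projective in degree $0$. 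One small point worth stating explicitly, though it causes no trouble: the resulting $P$ is a direct summand of $P_Y^{-n}=P_X^{-n}\oplus P_Z^{-n}$, hence is a \emph{finitely generated} projective $A$-module, as the statement requires. Your closing observation that $n=0$ returns the original triangle with $P=0$ (since then $(\cpx{P}_Y)_{\le 0}=\cpx{P}_Y$ when $\cpx{Y}$ is a module) is a useful sanity check.
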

	
	Recall that the dual notation of syzygy complexes, that is , cosyzygy complexes. Note that $\Db{A\modcat}$ is equivalent to $\Kpb{\imodcat{A}}$ as triangulated categories. For a complex $\cpx{X}\in \Db{A\modcat}$, a injective resolution of $\cpx{X}$ is a complex $\cpx{I}\in\Kpb{\imodcat{A}}$ such that $\cpx{I}\simeq \cpx{X}$ in $\D{A\modcat}$. In case $X$ is an $A$-module, $\cpx{I}$ is just a usual injective resolution of $X$.
	
	\begin{Def}{\rm (\cite[Definition 3.1$'$]{wei17})}
		{\rm
			Let $\cpx{X}\in \Db{A\modcat}$ and $n\in \mathbb{Z}$. Let $\cpx{I}$ be a injective resolution of $\cpx{X}$. We say that a complex in $\Db{A\modcat}$ is an $n$\textit{-th cosyzygy} of $\cpx{X}$ provied that it is isomorphic to $\cpx{I}_{\ge n}[n]$ in $\D{A\modcat}$. In this case, the $n$-th cosyzygy of $\cpx{X}$ is denoted by $\Omega^{\mathscr{D}}_n(\cpx{X}_{\cpx{I}})$, or simply by $\Omega^{\mathscr{D}}_n(\cpx{X})$ if there is no danger of confusion.
	}\end{Def}

	We state the dual results in  \cite[Section 3]{wei17}.
	
	\begin{Lem}
		{\rm (\cite[Lemma 3.3$'$]{wei17})}
		Let $\cpx{X}, \cpx{Y}\in\Db{A\modcat}$, and $m,n,s,t$ be integers. Let $\cpx{I}$ be a injective resolutions of $\cpx{X}$. Then
		
		{\rm (1)} $\Omega^{\mathscr{D}}_n(\cpx{X})\in \mathscr{D}^{[0,\infty)}(A\modcat)$ and $\Hom_{\Db{A\modcat}}(\Omega^{\mathscr{D}}_n(\cpx{X}),J[i])=0$ for any injective $A$-module $J$ and any integer $i>0$.
		
		{\rm (2)}
		If $\cpx{X}\in \mathscr{D}^{[s,t]}(A\modcat)$ for some integers $s\le t$, then $\Omega^{\mathscr{D}}_n(\cpx{X})\in \mathscr{D}^{[0,0]}(A\modcat),$
		$\mathscr{D}^{[0,t-n]}(A\modcat),$
		$\mathscr{D}^{[s-n,t-n]}(A\modcat)$
		for cases $n\ge t$, $s\le n\le t$, $n\le s$, respectively. In particular, $\Omega^{\mathscr{D}}_{t}(\cpx{X})$ is isomorphic to an $A$-module and $\cpx{X}\simeq \Omega_{\mathscr{D}}^{s}(\cpx{X})[s]$.
		
		{\rm (3)} $\Omega^{\mathscr{D}}_{n+m}(\cpx{X}[m])\simeq \Omega^{\mathscr{D}}_n(\cpx{X})$.
		
		{\rm (4)} $\Omega^{\mathscr{D}}_{n+m}(\cpx{X})\simeq \Omega^{\mathscr{D}}_m(\Omega^{\mathscr{D}}_n(\cpx{X}))$ for $m\ge 0$.
		
		{\rm (5)}
		$\cpx{X}\in \Kb{\imodcat{A}}$ if and only if any/some cosyzygy of $\cpx{X}$ is also in $\Kb{\imodcat{A}}$.
		
		{\rm (6)} $\Omega^{\mathscr{D}}_n(\cpx{X}\oplus \cpx{Y})\simeq \Omega^{\mathscr{D}}_n(\cpx{X})\oplus \Omega^{\mathscr{D}}_n(\cpx{Y})$.
	\end{Lem}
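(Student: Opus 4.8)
The plan is to deduce every clause from its already-established ``projective'' counterpart in Lemma \ref{lem-prop-omega} by transporting along the standard duality $D=\Hom_k(-,E(k/\rad(k)))\colon A\modcat\to A\opp\modcat$. First I would record the relevant compatibilities: $D$ is an exact contravariant equivalence, hence induces a contravariant triangle equivalence $\Db{A\modcat}\to\Db{A\opp\modcat}$ with $D(\cpx{X}[1])=D(\cpx{X})[-1]$; it carries $\imodcat{A}$ onto $\pmodcat{A\opp}$ and, being a duality, sends radical morphisms to radical morphisms, so by Lemma \ref{lem-radical} it takes a minimal injective resolution $\cpx{I}$ of $\cpx{X}$ to a minimal projective resolution $D\cpx{I}$ of $D\cpx{X}$ over $A\opp$; and it turns the stupid truncation $\cpx{I}_{\ge n}$ into $(D\cpx{I})_{\le -n}$. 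Combining these gives $D(\cpx{I}_{\ge n}[n])=(D\cpx{I})_{\le -n}[-n]$, which is exactly the $n$-th syzygy of $D\cpx{X}$; that is,
\[
D\,\Omega^{\mathscr{D}}_n(\cpx{X})\;\simeq\;\Omega_{\mathscr{D}}^{n}\big(D\cpx{X}\big)\quad\text{in }\Db{A\opp\modcat}.
\]
I would also note that $D$ sends $\mathscr{D}^{[a,b]}(A\modcat)$ to $\mathscr{D}^{[-b,-a]}(A\opp\modcat)$ (because $H^i(D\cpx{X})=D(H^{-i}(\cpx{X}))$) and that, for all complexes $U,V$ and all $i$, $\Hom_{\Db{A\modcat}}(U,V[i])\cong\Hom_{\Db{A\opp\modcat}}(DV,DU[i])$, interchanging injective $A$-modules with projective $A\opp$-modules.

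With this dictionary the six assertions are routine translations. For (1): Lemma \ref{lem-prop-omega}(1) gives $\Omega_{\mathscr{D}}^n(D\cpx{X})\in\mathscr{D}^{(-\infty,0]}(A\opp\modcat)$ and $\Hom_{\Db{A\opp\modcat}}(Q,\Omega_{\mathscr{D}}^n(D\cpx{X})[i])=0$ for projective $A\opp$-modules $Q$ and $i>0$; applying $D$ and the displayed isomorphism turns these into $\Omega^{\mathscr{D}}_n(\cpx{X})\in\mathscr{D}^{[0,\infty)}(A\modcat)$ and the stated Hom-vanishing against injective $A$-modules. For (2), feed $\cpx{X}\in\mathscr{D}^{[s,t]}(A\modcat)$ through $D$ to get $D\cpx{X}\in\mathscr{D}^{[-t,-s]}(A\opp\modcat)$, invoke Lemma \ref{lem-prop-omega}(2), and translate the three resulting cohomological intervals back, the case hypotheses $n\ge t$, $s\le n\le t$, $n\le s$ matching $-n\ge -t$, $-t\le -n\le -s$, $-n\le -s$. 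Parts (3), (4), (6) follow by applying $D$ to Lemma \ref{lem-prop-omega}(3), (4), (6), using $D(\cpx{X}[m])=D(\cpx{X})[-m]$ for (3) and additivity of $D$ for (6). Part (5) follows from Lemma \ref{lem-prop-omega}(5) together with the fact that $D$ restricts to an equivalence $\Kb{\imodcat{A}}\to\Kb{\pmodcat{A\opp}}$.

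The only point needing genuine care, rather than being purely formal, is the bookkeeping of signs and directions under the contravariant duality: one must check that a bounded-below injective resolution really does go to a bounded-above projective resolution, that minimality is preserved (this is exactly where being a duality, not merely an exact functor, is used), and that the truncation/shift identity $D(\cpx{I}_{\ge n}[n])=(D\cpx{I})_{\le -n}[-n]$ holds on the nose at the level of complexes, so that the uniqueness up to isomorphism of (co)syzygies noted in the Remark can be applied. Once that is pinned down, no further computation is required. (Alternatively, each clause can be proved directly by repeating Wei's arguments for Lemma \ref{lem-prop-omega} with injective resolutions in place of projective ones and arrows reversed, but the duality route avoids duplicating that work.)
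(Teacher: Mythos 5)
Your approach — deduce each clause of the dual lemma from the already-recorded projective version (Lemma \ref{lem-prop-omega}) by transporting along the Matlis duality $D$ — is exactly the intended argument; the paper states this lemma without proof, citing Wei and explicitly calling it ``the dual result,'' so the implied proof is precisely the dualization you spell out. Your list of needed compatibilities ($D$ is an exact contravariant triangle equivalence with $D(\cpx{X}[1])=D(\cpx{X})[-1]$, $D$ exchanges $\imodcat{A}$ and $\pmodcat{A\opp}$, $D$ preserves radical maps and hence minimal resolutions, and $D(\cpx{I}_{\ge n}[n])=(D\cpx{I})_{\le -n}[-n]$ so that $D\,\Omega^{\mathscr{D}}_n(\cpx{X})\simeq\Omega_{\mathscr{D}}^n(D\cpx{X})$) is complete and each is verified correctly.

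One caveat worth being precise about: if you actually carry out the dualization on part (3), you do \emph{not} obtain the formula as printed. Starting from $\Omega_{\mathscr{D}}^{n+m}(\cpx{Z}[m])\simeq\Omega_{\mathscr{D}}^{n}(\cpx{Z})$ with $\cpx{Z}=D\cpx{X}$ and using $D(\cpx{X}[-m])=(D\cpx{X})[m]$, the duality yields
\[
\Omega^{\mathscr{D}}_{n+m}(\cpx{X}[-m])\;\simeq\;\Omega^{\mathscr{D}}_n(\cpx{X}),
\qquad\text{equivalently}\qquad
\Omega^{\mathscr{D}}_{n-m}(\cpx{X}[m])\;\simeq\;\Omega^{\mathscr{D}}_n(\cpx{X}),
\]
and a direct computation with injective resolutions ($\cpx{I}[m]$ resolves $\cpx{X}[m]$, and $(\cpx{I}[m])_{\ge n-m}=(\cpx{I}_{\ge n})[m]$) confirms this form. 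The version printed in the lemma, $\Omega^{\mathscr{D}}_{n+m}(\cpx{X}[m])\simeq\Omega^{\mathscr{D}}_n(\cpx{X})$, would force $\Omega^{\mathscr{D}}_{n+2m}(\cpx{X})\simeq\Omega^{\mathscr{D}}_n(\cpx{X})$, which fails for $m\neq 0$; it is a sign slip in the statement (the subscript and the shift must move in opposite directions under the contravariant $D$, whereas in the projective version they move the same way). You should flag that your method \emph{detects} this rather than present (3) as a verbatim translation. The remaining clauses (1), (2), (4), (5), (6) dualize exactly as you say.
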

	
	\begin{Lem}{\rm (\cite[Lemma 3.5$'$]{wei17})}
		Let $\cpx{X}\in \Db{A\modcat}$. Then there is a triangle $\Omega^{\mathscr{D}}_{m}(\cpx{X})\ra\cpx{Y}\ra
		\Omega^{\mathscr{D}}_{n+1}(\cpx{X})[m-n]\ra \Omega^{\mathscr{D}}_{m}(\cpx{X})[1],$ where $\cpx{Y}\in \mathscr{K}^{[0,n-m]}(\imodcat{A})$ and $n\ge m$. In particular, for any $n$, there is a triangle $\Omega^{\mathscr{D}}_{n}(\cpx{X})\ra J\ra \Omega^{\mathscr{D}}_{n+1}(\cpx{X})\ra \Omega^{\mathscr{D}}_{n}(\cpx{X})[1]$ with $J$ injective.
	\end{Lem}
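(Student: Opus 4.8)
The plan is to dualize the argument behind Lemma \ref{lem-x-tri}, replacing the minimal projective resolution and the truncations from the right by an injective resolution and truncations from the left. Fix once and for all an injective resolution $\cpx{I}\in\Kpb{\imodcat{A}}$ of $\cpx{X}$, and compute all cosyzygies appearing in the statement with respect to this same $\cpx{I}$, so that $\Omega^{\mathscr{D}}_{m}(\cpx{X})=\cpx{I}_{\ge m}[m]$ and $\Omega^{\mathscr{D}}_{n+1}(\cpx{X})=\cpx{I}_{\ge n+1}[n+1]$ by definition.

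First I would write down the evident short exact sequence of complexes
$$0\lra \cpx{I}_{\ge n+1}\lra \cpx{I}_{\ge m}\lra \cpx{W}\lra 0,$$
where $\cpx{W}$ is the ``window'' complex with $\cpx{W}^{\,i}=I^{i}$ for $m\le i\le n$ and $\cpx{W}^{\,i}=0$ otherwise; here $\cpx{I}_{\ge n+1}$ genuinely is a subcomplex of $\cpx{I}_{\ge m}$ precisely because $n\ge m$. In every degree one of the three terms of this sequence is zero and the remaining map is an identity, so the sequence is split in each degree; it therefore gives rise to a triangle
$$\cpx{I}_{\ge n+1}\lra \cpx{I}_{\ge m}\lra \cpx{W}\lra \cpx{I}_{\ge n+1}[1]$$
in $\Kpb{\imodcat{A}}$, hence in $\Db{A\modcat}$.

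Next I would shift this triangle by $[m]$ and rotate. Using $\cpx{I}_{\ge m}[m]=\Omega^{\mathscr{D}}_{m}(\cpx{X})$ and $\cpx{I}_{\ge n+1}[m]=\Omega^{\mathscr{D}}_{n+1}(\cpx{X})[m-n-1]$, the shifted and rotated triangle reads
$$\Omega^{\mathscr{D}}_{m}(\cpx{X})\lra \cpx{W}[m]\lra \Omega^{\mathscr{D}}_{n+1}(\cpx{X})[m-n]\lra \Omega^{\mathscr{D}}_{m}(\cpx{X})[1].$$
Setting $\cpx{Y}=\cpx{W}[m]$, its components are $I^{i+m}$ for $0\le i\le n-m$, so $\cpx{Y}\in\mathscr{K}^{[0,n-m]}(\imodcat{A})$, which is exactly the asserted triangle. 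For the ``in particular'' statement I would specialize to $m=n$: then $\cpx{W}$ is concentrated in degree $n$ with component the injective module $I^{n}$, so $\cpx{Y}=\cpx{W}[n]$ is the stalk $J:=I^{n}$ in degree $0$, and $\Omega^{\mathscr{D}}_{n+1}(\cpx{X})[m-n]=\Omega^{\mathscr{D}}_{n+1}(\cpx{X})$, giving the triangle $\Omega^{\mathscr{D}}_{n}(\cpx{X})\lra J\lra \Omega^{\mathscr{D}}_{n+1}(\cpx{X})\lra \Omega^{\mathscr{D}}_{n}(\cpx{X})[1]$ with $J$ injective.

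The only thing demanding care — rather than a genuine obstacle — is the bookkeeping: getting the direction of the connecting morphism correct when passing from the short exact sequence to the triangle and then rotating, and keeping the shift convention straight so that the window complex lands in degrees $[0,n-m]$ after the shift by $[m]$. One should also keep in mind that cosyzygies are defined only relative to a chosen injective resolution, so it is essential to use the single fixed $\cpx{I}$ throughout; no minimality of $\cpx{I}$ is needed for this statement.
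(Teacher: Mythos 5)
Your proof is correct and is exactly what the paper (implicitly) expects: the paper cites this as \cite[Lemma 3.5$'$]{wei17} without giving its own argument, leaving the reader to dualize the proof of Lemma \ref{lem-x-tri} (\cite[Lemma 3.4]{wei17}) by replacing the minimal projective resolution and right-truncations with an injective resolution and left-truncations, which is precisely what you do. Your degreewise-split short exact sequence of complexes, the shift by $[m]$, the rotation, and the degree bookkeeping for $\cpx{Y}=\cpx{W}[m]$ all check out, including the specialization to $m=n$; your remark that only a fixed (not necessarily minimal) injective resolution is needed also matches the paper's Definition of cosyzygy, which unlike the syzygy case does not impose minimality.
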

	
	\begin{Lem}{\rm (\cite[Proposition 3.8$'$]{wei17})} Let $\cpx{X}\ra \cpx{Y}\ra \cpx{Z}\ra \cpx{X}[1]$ be a triangle in $\Db{A\modcat}$, and $n$ be an integer. Then there is a trianle $\Omega^{\mathscr{D}}_n(\cpx{X})\ra \Omega^{\mathscr{D}}_n(\cpx{Y})\oplus I\ra \Omega^{\mathscr{D}}_n(\cpx{Z})\ra \Omega^{\mathscr{D}}_n(\cpx{X})[1]$ for some injective $A$-module $I$.
	\end{Lem}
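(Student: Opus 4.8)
The statement is the exact dual of Lemma \ref{lem-omega-shift}, so the plan is to deduce it from that lemma by transporting everything through the $k$-duality $D=\Hom_k(-,E(k/\rad k))$, which is an additive, exact, contravariant triangle equivalence between $\Db{A\modcat}$ and $\Db{A\opp\modcat}$ exchanging injective $A$-modules with projective $A\opp$-modules. The first step is to record how $D$ interacts with the constructions involved. On the level of complexes $D$ reverses the grading, $(D\cpx{C})^i=D(C^{-i})$, so it carries $\Kpb{\imodcat{A}}$ into $\Kfb{\pmodcat{A\opp}}$, and it preserves the radical (equivalently, minimal) property since it is a duality between the module categories (cf. Lemma \ref{lem-radical}). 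Hence if $\cpx{I}$ is the minimal injective resolution of $\cpx{X}$, then $D\cpx{I}$ is the minimal projective resolution of $D\cpx{X}$, and from $D(\cpx{C}_{\ge n})=(D\cpx{C})_{\le -n}$ and $D(\cpx{C}[m])=(D\cpx{C})[-m]$ one obtains, for every $n\in\mathbb{Z}$,
$$D\bigl(\Omega^{\mathscr{D}}_n(\cpx{X})\bigr)\;\cong\;\Omega_{\mathscr{D}}^n\bigl(D\cpx{X}\bigr)\quad\text{in }\Db{A\opp\modcat},$$
and, applying $D$ to both sides together with $DD\cong\Id$, the symmetric identity $D\bigl(\Omega_{\mathscr{D}}^n(\cpx{W})\bigr)\cong\Omega^{\mathscr{D}}_n(D\cpx{W})$ for $\cpx{W}\in\Db{A\opp\modcat}$.

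Next I would apply $D$ to the given triangle. Since $D$ is a contravariant triangle functor, $\cpx{X}\to\cpx{Y}\to\cpx{Z}\to\cpx{X}[1]$ is carried to a distinguished triangle $D\cpx{Z}\to D\cpx{Y}\to D\cpx{X}\to D\cpx{Z}[1]$ in $\Db{A\opp\modcat}$. Invoking Lemma \ref{lem-omega-shift} over the algebra $A\opp$ for this triangle and the given integer $n$ yields a triangle
$$\Omega_{\mathscr{D}}^n(D\cpx{Z})\to \Omega_{\mathscr{D}}^n(D\cpx{Y})\oplus P\to \Omega_{\mathscr{D}}^n(D\cpx{X})\to \Omega_{\mathscr{D}}^n(D\cpx{Z})[1]$$
for some projective $A\opp$-module $P$. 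Finally I would apply $D$ once more, now from $\Db{A\opp\modcat}$ back to $\Db{A\modcat}$, again as a contravariant triangle functor, and simplify each term using the identities of the first step and $D(\cpx{A}\oplus\cpx{B})=D\cpx{A}\oplus D\cpx{B}$. This produces a triangle
$$\Omega^{\mathscr{D}}_n(\cpx{X})\to \Omega^{\mathscr{D}}_n(\cpx{Y})\oplus DP\to \Omega^{\mathscr{D}}_n(\cpx{Z})\to \Omega^{\mathscr{D}}_n(\cpx{X})[1]$$
in $\Db{A\modcat}$ in which $DP$ is an injective $A$-module; setting $I:=DP$ finishes the proof.

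Every step here is formal once the first-step identities are in place, so the only point that genuinely needs care is the behaviour of brutal truncation. In the dualization route this is hidden in the identity $D(\cpx{C}_{\ge n})=(D\cpx{C})_{\le -n}$ and in checking that $D$ of the minimal injective resolution really is the minimal projective resolution (not merely a complex homotopy equivalent to it). If one instead argues directly — which is the alternative I would fall back on — one lifts the triangle to a degreewise-split short exact sequence of injective resolutions $0\to\cpx{I}_{X}\to\cpx{I}_{Y}\oplus\cpx{J}\to\cpx{I}_{Z}\to 0$ with $\cpx{J}$ a contractible complex of injectives, applies the brutal truncation $(-)_{\ge n}[n]$ to obtain a triangle, and must then observe that $\cpx{J}_{\ge n}[n]$ is no longer contractible but is isomorphic in $\Db{A\modcat}$ to a single injective module — the cohomology created by the truncation — which is exactly where the summand $I$ comes from. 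This truncation subtlety, dual to the one underlying Lemma \ref{lem-omega-shift} itself, is the main obstacle; everything else is routine.
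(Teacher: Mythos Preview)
The paper does not give its own proof of this lemma: it simply quotes it as \cite[Proposition 3.8$'$]{wei17}, after announcing ``We state the dual results in \cite[Section 3]{wei17}.'' So there is no proof in the paper to compare against.

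Your duality-transport argument is correct. The key identity $D\bigl(\Omega^{\mathscr{D}}_n(\cpx{X})\bigr)\cong\Omega_{\mathscr{D}}^n(D\cpx{X})$ follows exactly as you wrote, from $D(\cpx{C}_{\ge n})=(D\cpx{C})_{\le -n}$ and $D(\cpx{C}[m])=(D\cpx{C})[-m]$ together with the fact that $D$ carries minimal injective resolutions to minimal projective resolutions; the rest is formal. The alternative direct argument you sketch (lifting to a degreewise-split short exact sequence of injective resolutions and brutally truncating) is presumably closer to how \cite{wei17} actually proves Proposition 3.8$'$, and your remark about where the injective summand $I$ comes from---the single cohomology of the truncated contractible complex---is exactly the right picture. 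Either route is fine; the duality argument has the advantage of avoiding any repetition of the syzygy-side computation.
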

	
	\subsection{Recollements}
	
	In this subsection, we recall the notation of recollements introduced by Beilinson, Bernstein and Deligne in \cite{BBD}, which is a very useful tool for representation theory and algebraic geometry.
	
	\begin{Def}{\rm
			Let $\mathscr{D}$, $\mathscr{D}'$ and $\mathscr{D}''$ be triangulated categories with shift functors denoted by $[1]$.
			
			{\rm (1)}
			{\rm (\cite{BBD})} A \emph{recollement} of $\mathscr{D}$ by $\mathscr{D}'$ and $\mathscr{D}''$ is a diagram of six triangle functors
			\begin{align}\label{diag:rec}
				\xymatrixcolsep{4pc}\xymatrix{\mathscr{D}' \ar[r]|{i_*=i_!} &\mathscr{D} \ar@<-2ex>[l]|{i^*} \ar@<2ex>[l]|{i^!} \ar[r]|{j^!=j^*}  &\mathscr{D}'', \ar@<-2ex>[l]|{j_!} \ar@<2ex>[l]|{j_{*}}
				}
			\end{align}
			which satisfies
			
			{\rm (R1)} $(i^\ast,i_\ast)$,\,$(i_!,i^!)$,\,$(j_!,j^!)$ ,\,$(j^\ast,j_\ast)$
			are adjoint pairs;
			
			{\rm (R2)}
			$i_*,~j_*,~j_!$ are fully faithful$;$
			
			{\rm (R3)}
			$j^*\circ i_*=0$ $($thus $i^*\circ j_!=0$ and $i^!\circ j_*=0$$);$
			
			{\rm (R4)}
			for any object $X$ in $\mathscr{D}$, there are two triangles in $\mathscr{D}$ induced by counit and unit adjunctions$:$
			$$ \xymatrix@R=0.5pc{i_*i^!(X)\ar[r] &X \ar[r] & j_*j^*(X) \ar[r]& i_*i^!(X)[1]\\
				j_!j^*(X) \ar[r] &X \ar[r] & i_*i^*(X) \ar[r]& j_!j^*(X)[1]} $$
			The upper two rows
			\begin{align*}
				\xymatrixcolsep{4pc}\xymatrix{\mathscr{D}' \ar[r]|{i_*} &\mathscr{D} \ar@<-2ex>[l]|{i^*}  \ar[r]|{j^*}  &\mathscr{D}'' \ar@<-2ex>[l]|{j_!}
				}
			\end{align*}
			is said to be a \emph{left recollemet} of $\mathscr{D}$ by $\mathscr{D}'$ and $\mathscr{D}''$ if the four functors $i^*$, $i_*$, $j_!$ and $j^!$ satisfy the conditions in {\rm (R1)-(R4)} involving them.  A \emph{right recollement} is defined similarly via the lower two rows.
			
			{\rm (2)}
			{\rm (\cite{akly17})}
			A \emph{ladder} of $\mathscr{D}$ by $\mathscr{D}'$ and $\mathscr{D}''$ is a finite or infinite diagram with triangle functors
			\vspace{-10pt}$${\setlength{\unitlength}{0.7pt}
				\begin{picture}(200,170)
					\put(0,70){$\xymatrix@!=3pc{\mathscr{D}'
							\ar@<+2.5ex>[r]|{i_{n-1}}\ar@<-2.5ex>[r]|{i_{n+1}} &\mathscr{D}
							\ar[l]|{j_n} \ar@<-5.0ex>[l]|{j_{n-2}} \ar@<+5.0ex>[l]|{j_{n+2}}
							\ar@<+2.5ex>[r]|{j_{n-1}} \ar@<-2.5ex>[r]|{j_{n+1}} &
							\mathscr{D}''\ar[l]|{i_n} \ar@<-5.0ex>[l]|{i_{n-2}}
							\ar@<+5.0ex>[l]|{i_{n+2}}}$}
					\put(52.5,10){$\vdots$}
					\put(137.5,10){$\vdots$}
					\put(52.5,130){$\vdots$}
					\put(137.5,130){$\vdots$}
			\end{picture}}$$
			such that any three consecutive rows form a recollement. The rows are labelled by a subset of $\mathbb{Z}$ and multiple occurence of the same recollement is allowed. The \emph{height} of a ladder is the number of recollements
			contained in it $($counted with multiplicities$).$ It is an element of $\mathbb{N}\cup \{0,\infty\}$. A recollement is viewed to be a ladder of height $1$.
	}\end{Def}
	
	Recall that a sequence of triangulated categories $\mathscr{D}'\stackrel{F}{\to}\mathscr{D}\stackrel{G}{\to}\mathscr{D}''$ is said to be a \emph{short exact sequence} (up to direct summands) if $F$ is fully faithful, $G\circ F=0$ and the induced functor $\overline{G}\colon \mathscr{D}/\mathscr{D}'\to\mathscr{D}''$ is an equivalence (up to direct summands). The following result is well-known, see \cite[1.4.4, 1.4.5, 1.4.8]{BBD} and \cite[Chapter III, Lemma 1.2 (1) and Chapter IV, Proposition 1.11]{BR07}.
	
	\begin{Prop}
		\label{prop:recollement-vs-ses}
		$(1)$ The two rows of a left recollement are short exact sequences of triangulated categories.
		Conversely, assume that there is a short exact sequence of triangulated categories {\rm (}possibly up to direct summands{\rm)}
		$$\xymatrixcolsep{2pc}\xymatrix{\mathscr{D}' \ar[r]^{i_*} &\mathscr{D}   \ar[r]^{j^*}  &\mathscr{D}''.
		}$$
		Then $i_*$ has a left adjoint {\rm (}respectively, right adjoint{\rm)} if and only if $j^*$ has a left adjoint {\rm (}respectively, right adjoint{\rm)}. In this case, $i_*$ and $j^*$ together with their left adjoints {\rm (}respectively, right adjoints{\rm)} form a left recollement {\rm (}respectively, right recollement{\rm)} of $\mathscr{D}$ in terms of $\mathscr{D}'$ and $\mathscr{D}''$.
		
		$(2)$ Assume that there is a diagram
		\begin{align*}
			\xymatrixcolsep{4pc}\xymatrix{\mathscr{D}' \ar[r]|{i_*=i_!} &\mathscr{D} \ar@<-2ex>[l]|{i^*} \ar@<2ex>[l]|{i^!} \ar[r]|{j^!=j^*}  &\mathscr{D}'' \ar@<-2ex>[l]|{j_!} \ar@<2ex>[l]|{j_{*}}
			}
		\end{align*} satisfying the condition {\rm (R1)}. If it is a recollement, then all the three rows are short exact sequences of triangulated categories. Conversely, if any one of the three rows is a short exact sequence of triangulated categories, then the diagram is a recollement.
		
		$(3)$ Given a recollement \eqref{diag:rec}, assume that $\mathscr{D}$, $\mathscr{D}'$ and $\mathscr{D}''$ admit small coproducts. Then both $j_!$ and $i^*$ preserve compact objects.
	\end{Prop}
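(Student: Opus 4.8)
The plan is to reduce everything to two standard facts about triangulated categories and then to obtain $(2)$ and $(3)$ formally from $(1)$. The first fact: a triangle functor that admits a fully faithful left (or right) adjoint is, up to direct summands, the Verdier quotient functor by its kernel. The second fact (Bousfield (co)localization): for a thick subcategory $\mathscr{C}$ of a triangulated category $\mathscr{D}$, the inclusion $\mathscr{C}\hookrightarrow\mathscr{D}$ has a left adjoint (resp.\ a right adjoint) if and only if the quotient functor $\mathscr{D}\to\mathscr{D}/\mathscr{C}$ does. Granting these, I would prove $(1)$ first.

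For the forward direction of $(1)$, I would start from a left recollement with functors $i^*,i_*,j_!,j^*$. By (R2), $i_*$ is fully faithful and thus embeds $\mathscr{D}'$ as a triangulated subcategory of $\mathscr{D}$; by (R3), $j^*i_*=0$. The triangle $j_!j^*(X)\to X\to i_*i^*(X)\to j_!j^*(X)[1]$ of (R4) shows that $\ker(j^*)$ is exactly this subcategory, while fully-faithfulness of $j_!$ gives $j^*j_!\cong\Id$, so $j^*$ is essentially surjective. Since $j_!$ is a fully faithful left adjoint of $j^*$, the first fact shows that the functor $\mathscr{D}/\mathscr{D}'\to\mathscr{D}''$ induced by $j^*$ is an equivalence up to direct summands; hence $\mathscr{D}'\xrightarrow{i_*}\mathscr{D}\xrightarrow{j^*}\mathscr{D}''$ is a short exact sequence. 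The same argument, now using that $i^*$ has the fully faithful \emph{right} adjoint $i_*$ and that $i^*j_!=0$ (also from (R3)), shows that $\mathscr{D}''\xrightarrow{j_!}\mathscr{D}\xrightarrow{i^*}\mathscr{D}'$ is a short exact sequence.

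For the converse of $(1)$: given a short exact sequence $\mathscr{D}'\xrightarrow{i_*}\mathscr{D}\xrightarrow{j^*}\mathscr{D}''$, the second fact gives that $i_*$ has a left adjoint if and only if $j^*$ has a left adjoint $j_!$, and when this holds $j_!$ is automatically fully faithful. The cone of the counit $j_!j^*(X)\to X$ then defines, functorially in $X$, a functor $i^*\colon\mathscr{D}\to\mathscr{D}'$, and a routine check of the triangle identities shows $(i^*,i_*)$ is an adjoint pair and $i^*,i_*,j_!,j^*$ form a left recollement; the right-adjoint case is dual. Part $(2)$ is then obtained by assembling pieces: a recollement contains the left recollement $(i^*,i_*,j_!,j^*)$ and the right recollement $(i^!,i_*,j_*,j^*)$, so by $(1)$ all three of
$$\mathscr{D}'\xrightarrow{i_*}\mathscr{D}\xrightarrow{j^*}\mathscr{D}'',\qquad \mathscr{D}''\xrightarrow{j_!}\mathscr{D}\xrightarrow{i^*}\mathscr{D}',\qquad \mathscr{D}''\xrightarrow{j_*}\mathscr{D}\xrightarrow{i^!}\mathscr{D}'$$
are short exact sequences; conversely, if (R1) holds and any one of these rows is a short exact sequence, then the converse of $(1)$ applied to both adjoints of the corresponding quotient functor yields a left recollement and a right recollement, and one checks that these assemble into the recollement — the two triangles in (R4) being exactly the defining triangles of the two (co)localizations.

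For $(3)$: since $i_*=i_!$ has the right adjoint $i^!$ and $j^*$ has the right adjoint $j_*$, both $i_*$ and $j^*$ preserve small coproducts. For a compact object $X\in\mathscr{D}''$ and a family $\{Y_\lambda\}$ in $\mathscr{D}$,
\begin{align*}
\Hom_{\mathscr{D}}(j_!X,\coprod\nolimits_\lambda Y_\lambda)
&\cong\Hom_{\mathscr{D}''}(X,j^*\coprod\nolimits_\lambda Y_\lambda)
\cong\Hom_{\mathscr{D}''}(X,\coprod\nolimits_\lambda j^*Y_\lambda)\\
&\cong\coprod\nolimits_\lambda\Hom_{\mathscr{D}''}(X,j^*Y_\lambda)
\cong\coprod\nolimits_\lambda\Hom_{\mathscr{D}}(j_!X,Y_\lambda),
\end{align*}
so $j_!X$ is compact; running the same chain with the adjunction $(i^*,i_*)$ and using that $i_*=i_!$ preserves coproducts shows $i^*$ preserves compactness. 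I expect the only genuinely delicate point to be the careful treatment, in part $(1)$, of the universal property of the Verdier quotient together with the ``up to direct summands'' (idempotent-completion) subtlety; everything else is formal manipulation of adjunctions.
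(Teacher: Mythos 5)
The paper does not prove this proposition; right above the statement the authors write that it ``is well-known'' and cite \cite[1.4.4, 1.4.5, 1.4.8]{BBD} and \cite[Ch.\ III, Lemma 1.2(1); Ch.\ IV, Prop.\ 1.11]{BR07}, so there is no in-paper argument to compare against. Your sketch correctly reconstructs the standard Bousfield (co)localization argument that those references use, and your proof of $(3)$ via adjunctions and preservation of coproducts is exactly the usual one; as a blind reconstruction of a cited result it is structurally sound.

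Two places deserve more care before this could be called a proof rather than a sketch. First, in the converse of $(1)$ you construct $i^*(X)$ as ``the cone of the counit $j_!j^*(X)\to X$'' and assert this is functorial; cones in a triangulated category are not functorial in general, so you must actually prove uniqueness of the filler. This follows because the cone $C_X$ lies in $\ker j^*$, while $j_!(\mathscr{D}'')\subseteq{}^\perp\ker j^*$ (via the adjunction), so the two relevant $\Hom$ groups in the long exact sequence vanish and $\Hom(C_X,C_Y)\xrightarrow{\sim}\Hom(X,C_Y)$ — this is precisely the orthogonality that drives the whole (co)localization theory, and should be stated rather than waved away as ``routine.'' Second, in the converse of $(2)$, the phrase ``applied to both adjoints of the corresponding quotient functor'' works verbatim only when the SES row is the middle one ($j^*$ then has both $j_!$ and $j_*$ from (R1)). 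If the SES row is the top or bottom row, (R1) supplies only one of the two adjoints of the outer functor; you should first apply the converse of $(1)$ once to produce the complementary recollement and deduce that the middle row is then a SES, and only then apply $(1)$ again to assemble the full diagram. Finally, your treatment of the ``up to direct summands'' issue — which you flag yourself — is genuinely needed: when the induced $\mathscr{D}/\mathscr{D}'\to\mathscr{D}''$ is only an equivalence up to summands, the cone $C_X$ may be a summand of an object of $i_*(\mathscr{D}')$ without itself being in the essential image, and one must argue (e.g.\ via a retract argument or idempotent completeness of the categories at hand) that $i^*(X)$ is still well-defined in $\mathscr{D}'$.
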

	
	\begin{Def}{\rm
			We say that a recollement \eqref{diag:rec} \emph{extends one step downwards}  if both $i^!$ and $j_*$ have right adjoints. In this case, let $i_{?}$ and $j^{\#}$ be the right adjoints of $i^!$ and $j_*$, respectively. Then we have the following diagram:
			\begin{align}\label{ladder-2}
				\xymatrixcolsep{4pc}\xymatrix{\mathscr{D}' \ar[r]|{i_*=i_!} \ar@<-4ex>[r]|{i_{?}}
					&\mathscr{D} \ar@<-2ex>[l]|{i^*} \ar@<2ex>[l]|{i^!} \ar[r]|{j^!=j^*} \ar@<-4ex>[r]|{j^{\#}} &\mathscr{D}''. \ar@<-2ex>[l]|{j_!} \ar@<2ex>[l]|{j_{*}}
				}
			\end{align}
			By Proposition~\ref{prop:recollement-vs-ses}, the lower three rows also form a recollement. This means that (\ref{ladder-2}) form a ladder of height $2$. Similarly, we have the notion of extending the recollement one step upwards.
	}\end{Def}
	
	Now, we consider the recollements of derived module categories. Let $F:\Db{A\modcat}\to \Db{B\modcat}$ be a triangle functor. We say that $F$ \emph{restricts to} $\Kb{\rm proj}$ (respectively, $\Kb{\rm inj}$) if $F$ sends $\Kb{\pmodcat{A}}$ (respectively, $\Kb{\imodcat{A}}$) to $\Kb{\pmodcat{B}}$ (respectively, $\Kb{\imodcat{B}}$). Let $F:\D{A\Modcat}\to \D{B\Modcat}$ be a triangle functor. We say that $F$ \emph{restricts to} $\Db{\rm mod}$ (respectively, $\Kb{\rm proj}$, $\Kb{\rm inj}$) in a similar sense.
	
	The following result is well-known, see \cite{akly17,JYZ23}.
	
	\begin{Lem}\label{lem-rec-prop}
		Let $A$, $B$ and $C$ be Artin algebras.
		Suppose that there is a recollement among the derived categories
		$\D{A\Modcat}$, $\D{B\Modcat}$ and $\D{C\Modcat}$:
		\begin{align}\label{rec-der}
			\xymatrixcolsep{4pc}\xymatrix{
				\D{B\Modcat} \ar[r]|{i_*=i_!} &\D{A\Modcat} \ar@<-2ex>[l]|{i^*} \ar@<2ex>[l]|{i^!} \ar[r]|{j^!=j^*}  &\D{C\Modcat}. \ar@<-2ex>[l]|{j_!} \ar@<2ex>[l]|{j_{*}}
			}
		\end{align}
		Then $i^{*}$ and $j_{!}$ restrict to $\mathscr{K}^b({\rm proj})${\rm ;} $i_{*}$ and $j^{!}$ restrict to $\mathscr{D}^b({\rm mod})$. Moreover, the following hold true.
		
		{\rm (1)} The following conditions are equivalent$:$
		
		{\rm (i)} the recollement {\rm (\ref{rec-der})} extends one step downwards$;$
		
		{\rm (ii)} $i_{*}$ restricts to $\mathscr{K}^b({\rm proj})$$;$
		
		{\rm (iii)} $i_{*}(B)\in\Kb{\pmodcat{A}}$$;$
		
		{\rm (iv)} $j^{*}$ restricts to $\mathscr{K}^b({\rm proj})$$;$
		
		{\rm (v)} $j^{*}(A)$ restricts to $\mathscr{K}^b({\pmodcat{C}})$$;$
		
		{\rm (vi)} $i^!$ restricts to $\mathscr{D}^b({\rm mod})$$;$
		
		{\rm (vii)} $j_*$ restricts to $\mathscr{D}^b({\rm mod})$$;$
		
		{\rm (viii)} $i^!$ has a right adjoint$;$
		
		{\rm (ix)} $j_*$ has a right adjoint.
		
		In this case, the recollement {\rm (\ref{rec-der})} restricts to a left recollement and a right recollement
		\begin{align*}
			\xymatrixcolsep{4pc}\xymatrix{
				\Kb{\pmodcat{B}} \ar[r]|{i_*=i_!} &\Kb{\pmodcat{A}} \ar@<-2ex>[l]|{i^*}  \ar[r]|{j^!=j^*}  &\Kb{\pmodcat{C}}, \ar@<-2ex>[l]|{j_!}
			}
		\end{align*}
		\begin{align*}
			\xymatrixcolsep{4pc}\xymatrix{
				\Db{B\modcat} \ar[r]|{i_*=i_!} &\Db{A\modcat}  \ar@<2ex>[l]|{i^!} \ar[r]|{j^!=j^*}  &\Db{C\modcat}. \ar@<2ex>[l]|{j_{*}}
			}
		\end{align*}
		
		{\rm (2)} The following conditions are equivalent$:$
		
		{\rm (i)} the recollement {\rm (\ref{rec-der})} extends one step upwards$;$
		
		{\rm (ii)} $i_{*}$ restricts to $\mathscr{K}^b({\rm inj})$$;$
		
		{\rm (iii)} $i_{*}(D(B))\in \Kb{\imodcat{A}}$$;$
		
		{\rm (iv)} $j^{*}$ restricts to $\mathscr{K}^b({\rm inj})$$;$
		
		{\rm (v)} $j^{*}(D(A))\in \Kb{\imodcat{C}}$$;$
		
		{\rm (vi)} $i^*$ restricts to $\mathscr{D}^b({\rm mod})$$;$
		
		{\rm (vii)} $j_{!}$ restricts to $\mathscr{D}^b({\rm mod})$$;$
		
		{\rm (viii)} $i^*$ has a left adjoint$;$
		
		{\rm (ix)} $j_{!}$ has a left adjoint.
		
		In this case, the recollement {\rm (\ref{rec-der})} restricts to a left recollement and a right recollement
		\begin{align*}
			\xymatrixcolsep{4pc}\xymatrix{
				\Db{B\modcat} \ar[r]|{i_*=i_!} &\Db{A\modcat}  \ar@<-2ex>[l]|{i^*}  \ar[r]|{j^!=j^*}  &\Db{C\modcat}, \ar@<-2ex>[l]|{j_!}
			}
		\end{align*}
		\begin{align*}
			\xymatrixcolsep{4pc}\xymatrix{
				\Kb{\imodcat{B}}  \ar[r]|{i_*=i_!} &\Kb{\imodcat{A}} \ar@<2ex>[l]|{i^!} \ar[r]|{j^!=j^*}  &\Kb{\imodcat{C}}. \ar@<2ex>[l]|{j_{*}}
			}
		\end{align*}
	\end{Lem}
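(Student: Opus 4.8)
The argument rests on two inputs: the identification of the compact objects of the big derived categories as the perfect complexes $\Kb{\pmodcat{A}}=\per(A)$ (with $A$ a compact generator), and the adjunctions built into the recollement. Since all three categories have small coproducts, Proposition~\ref{prop:recollement-vs-ses}(3) at once gives that $i^{*}$ and $j_{!}$ restrict to $\mathscr{K}^{b}(\mathrm{proj})$; in particular $j_{!}(C)\in\Kb{\pmodcat{A}}$. Moreover $i^{*}(A)$ is a compact \emph{generator} of $\D{B\Modcat}$ — compact because $i^{*}$ preserves compacts, a generator because $i_{*}$ is faithful and $A$ generates $\D{A\Modcat}$ — so $\mathrm{thick}(i^{*}A)=\Kb{\pmodcat{B}}$. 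From this I extract the criterion used repeatedly below: a complex $Z\in\D{B\Modcat}$ lies in $\Db{B\modcat}$ if and only if $\Hom_{\D{B\Modcat}}(i^{*}A,Z[n])$ has finite length over $k$ for every $n$ and vanishes for all but finitely many $n$; indeed $H^{n}(Z)=\Hom_{\D{B\Modcat}}(B,Z[n])$ and $B\in\mathrm{thick}(i^{*}A)$, so each $H^{n}(Z)$ is built from finitely many of those groups by cones and summands.

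Next I would establish the two unconditional restriction statements. For $N\in B\modcat$ one has $H^{n}(i_{*}N)\cong\Hom_{\D{A\Modcat}}(A,i_{*}N[n])\cong\Hom_{\D{B\Modcat}}(i^{*}A,N[n])$; since $i^{*}A$ is quasi-isomorphic to a bounded complex of finitely generated projective $B$-modules, this is of finite length for every $n$ and zero for all but finitely many $n$, so $i_{*}N\in\Db{A\modcat}$. Likewise $H^{n}(j^{*}M)\cong\Hom_{\D{A\Modcat}}(j_{!}C,M[n])$ for $M\in A\modcat$, and $j_{!}C\in\Kb{\pmodcat{A}}$ forces $j^{*}M\in\Db{C\modcat}$. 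Since $\Db{B\modcat}$ and $\Db{A\modcat}$ are the smallest triangulated subcategories of $\D{B\Modcat}$, respectively $\D{A\Modcat}$, containing the finitely generated modules (split off the components of a bounded complex by iterated brutal-truncation triangles), the full triangulated subcategories $\{N'\in\D{B\Modcat}\mid i_{*}N'\in\Db{A\modcat}\}$ and $\{X\in\D{A\Modcat}\mid j^{*}X\in\Db{C\modcat}\}$ contain $\Db{B\modcat}$, respectively $\Db{A\modcat}$; hence $i_{*}$ and $j^{*}$ restrict to $\mathscr{D}^{b}(\mathrm{mod})$.

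For the equivalences in (1): by definition the recollement extends one step downwards iff both $i^{!}$ and $j_{*}$ have right adjoints, and Proposition~\ref{prop:recollement-vs-ses}(1) applied to the short exact sequence $\D{C\Modcat}\xrightarrow{j_{*}}\D{A\Modcat}\xrightarrow{i^{!}}\D{B\Modcat}$ (a row of the recollement, exact by Proposition~\ref{prop:recollement-vs-ses}(2)) shows these two conditions are equivalent, giving (i)$\Leftrightarrow$(viii)$\Leftrightarrow$(ix). Next, $i^{!}$ has a right adjoint iff it preserves coproducts iff its left adjoint $i_{*}$ preserves compact objects iff $i_{*}$ restricts to $\mathscr{K}^{b}(\mathrm{proj})$ — standard facts about adjoints of triangle functors between compactly generated categories, see \cite{akly17} — which is (viii)$\Leftrightarrow$(ii); and $\Kb{\pmodcat{B}}=\mathrm{thick}(B)$ with $i_{*}$ a triangle functor gives (ii)$\Leftrightarrow$(iii). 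The mirror argument on the $j$-side gives (ix)$\Leftrightarrow$(iv)$\Leftrightarrow$(v); one can also see (iii)$\Leftrightarrow$(v) from the canonical triangle $j_{!}j^{*}A\to A\to i_{*}i^{*}A\to j_{!}j^{*}A[1]$, since $A$ is perfect, $\mathrm{thick}(i_{*}i^{*}A)=\mathrm{thick}(i_{*}B)$, and $j_{!}$ reflects compactness. Finally (iii)$\Rightarrow$(vi): for $X\in\Db{A\modcat}$, $\Hom_{\D{B\Modcat}}(i^{*}A,i^{!}X[n])\cong\Hom_{\D{A\Modcat}}(i_{*}i^{*}A,X[n])$ is of finite length and almost all zero (as $i_{*}i^{*}A$ is perfect), so $i^{!}X\in\Db{B\modcat}$ by the criterion; similarly (v)$\Rightarrow$(vii) via $H^{n}(j_{*}W)\cong\Hom_{\D{C\Modcat}}(j^{*}A,W[n])$. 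The implications (vi)$\Rightarrow$ and (vii)$\Rightarrow$ closing the cycle I would obtain as in \cite{akly17,JYZ23}, using that a recollement of derived module categories can be put into standard form (all six functors being derived tensor or Hom functors against bounded complexes of bimodules), which makes the relevant amplitude bounds uniform. Statement (2) is proved dually, with $\mathscr{K}^{b}(\mathrm{proj})$, $\per$, $i^{*}$, $j_{!}$ replaced by $\mathscr{K}^{b}(\mathrm{inj})$, $\Kb{\imodcat{A}}$, $i^{!}$, $j_{*}$ (equivalently, by applying the duality $D$).

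For the ``in this case'' clauses, once $i^{*},j_{!},i_{*},j^{*}$ all restrict to $\mathscr{K}^{b}(\mathrm{proj})$ the upper two rows restrict to $\Kb{\pmodcat{B}}\xrightarrow{i_{*}}\Kb{\pmodcat{A}}\xrightarrow{j^{*}}\Kb{\pmodcat{C}}$ with adjoints $i^{*},j_{!}$; by the Thomason--Neeman localization theorem this is a short exact sequence of triangulated categories up to direct summands, and Proposition~\ref{prop:recollement-vs-ses}(1) promotes it to a left recollement, while the right recollement $\Db{B\modcat}\to\Db{A\modcat}\to\Db{C\modcat}$ comes the same way from the upper half restricted to $\mathscr{D}^{b}(\mathrm{mod})$ using the right adjoint $i^{!}$. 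The step I expect to be the main obstacle is exactly the passage through $i_{*}$, $j^{*}$, $i^{!}$, $j_{*}$, which need not preserve compact objects: controlling finite generation and boundedness of cohomology along them — both for the unconditional restrictions and for closing the equivalence cycle at (vi), (vii) — is where the structure theory of standard recollements is genuinely needed.
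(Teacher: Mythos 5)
The paper itself offers no proof of this lemma---it simply cites it as ``well-known'' from \cite{akly17,JYZ23}---so what you have written is a reconstruction rather than a comparison. Most of it is sound and follows the standard route: Proposition~\ref{prop:recollement-vs-ses}(3) handles $i^{*},j_{!}$; the adjunction isomorphisms $H^{n}(i_{*}N)\cong\Hom(i^{*}A,N[n])$ and $H^{n}(j^{*}M)\cong\Hom(j_{!}C,M[n])$, together with the fact that an $A$-module is finitely generated iff it has finite $k$-length, give the unconditional restrictions of $i_{*}$ and $j^{!}$ to $\Db{\rm mod}$; Brown representability in compactly generated categories gives (viii)\,$\Leftrightarrow$\,(ii) and (ix)\,$\Leftrightarrow$\,(iv); the thick-subcategory generation $\Kb{\pmodcat{B}}=\mathrm{thick}(B)$ gives (ii)\,$\Leftrightarrow$\,(iii); Proposition~\ref{prop:recollement-vs-ses}(1) links (viii) and (ix); and (iii)\,$\Rightarrow$\,(vi), (v)\,$\Rightarrow$\,(vii) follow by the same Hom-group computations. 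The promotion of the restricted rows to a left recollement on $\Kb{\rm proj}$ via Thomason--Neeman is also the right move.

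However, your proof is not closed: you establish (i)--(v), (viii), (ix) as an equivalence cycle and show this cycle \emph{implies} (vi) and (vii), but you never show (vi) or (vii) feeds back into the cycle. You flag this yourself and defer to \cite{akly17,JYZ23}, which is honest but means the argument is not self-contained. The missing implications (vi)\,$\Rightarrow$\,(ii) and (vii)\,$\Rightarrow$\,(iv) are genuinely the hard part: for instance, $i_{*}B$ is compact iff $H^{0}\circ i^{!}$ commutes with coproducts, and knowing only that $i^{!}$ lands in $\Db{B\modcat}$ when fed $\Db{A\modcat}$ does not by itself control $i^{!}$ on arbitrary coproducts. The standard resolution, which \cite{akly17} use, is to put the recollement of derived \emph{module} categories into standard form (all six functors derived tensor/Hom against fixed bimodule complexes), which turns each of (ii)--(ix) into a statement about the amplitude or homotopical finiteness of one of those bimodule complexes; these conditions are then manifestly equivalent. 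Without that normalization, the implications from (vi) and (vii) remain a gap. A second, smaller point: your ``criterion'' that $Z\in\Db{B\modcat}$ iff $\Hom(i^{*}A,Z[n])$ has finite $k$-length and vanishes for almost all $n$ is correct but somewhat indirect---one can simply note $B\in\mathrm{thick}(i^{*}A)$ and apply the Hom-long-exact-sequence dévissage directly to compute $H^{n}(Z)=\Hom(B,Z[n])$, which is how you in fact use it.
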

	
	\section{Definition of Igusa-Todorov distances}
	\label{sect-def}
	In this section, we introduce the notion of the Igusa-Todorov distance of an Artin algebra.
	
	\begin{Def}\label{def-sf-it}
		{\rm Let $A$ be an Artin algebra, and $n$ be a nonnegative integer.
			
			{\rm (1)} $A$ is said to be $n$-\emph{syzygy-finite} if $\Omega^n(A\modcat)$ is representation-finite, that is, the number of non-isomorphic indecomposable direct summands of modules in $\Omega^n(A\modcat)$ is finite. $A$ is said to be \emph{syzygy-finite} if $A$ is $n_0$-syzygy-finite for some $n_0$.
			
			{\rm (2)} {\rm (\cite[Definition 2.2]{wei09})}
			$A$ is said to be an $n$\textit{-Igusa-Todorov algebra} if there exists an $A$-module $U$ such that for any $A$-module $X$ there exists an exact sequence
			$$0\lra U_{1}\lra U_{0} \lra \Omega^{n}(X) \lra 0$$
			where $U_{i} \in \add(_AU)$ for each $0 \le i \le 1$.
			Such a module $U$ is said to be an $n$-Igusa-Todorov module.
			$A$ is said to be an \emph{Igusa-Todorov algebra} if $A$ is an $n_0$-Igusa-Todorov algebra for some $n_0$.
		}
	\end{Def}
	
	\begin{Def}\label{def-italg}
		{\rm (\cite[Definition 2.1]{zheng22})}
		{\rm
			Let $m$ and $n$ be nonnegative integers. An Artin algebra $A$ is said
			to be an $(m,n)$\textit{-Igusa-Todorov algebra} if there is a module
			$U\in A\modcat$ such that for any module $X\in A\modcat$ there exists an exact sequence
			$$0\lra U_m\lra U_{m-1}\lra \cdots \lra U_1\lra U_0\lra \Omega^n(X)\lra 0$$
			where $U_i \in \add(U)$ for each $0 \le i \le m $.
			Such a module $U$ is said to be an $(m,n)$-Igusa-Todorov module.
		}
	\end{Def}
	
	To measure how far an algebra is from being Igusa-Todorov algebras, 
	the Igusa-Todorov distance of an Artin algebra is defined now.
	\begin{Def}\label{def-itdist}
		{\rm 
			Let $A$ be an Artin algebra.
			We set the \textit{Igusa-Todorov distance} of $A$ as follows
			$$\ITdist(A):=\inf\{m \mid A \mbox{ is an } (m,n) \mbox{-Igusa-Todorov algebra for some } n\}.$$
		}
	\end{Def}
	
	\begin{Lem}\label{lem-it}
		Let $A$ be an Artin algebra.
		
		{\rm (1)} $\ITdist(A)=0$ if and only if $A$ is syzygy-finite.
		
		{\rm (2)} $\ITdist(A)\le 1$ if and only if $A$ is Igusa-Todorov algebra.
	\end{Lem}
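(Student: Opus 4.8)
The plan is to deduce both equivalences directly from Definitions \ref{def-sf-it}, \ref{def-italg} and \ref{def-itdist}. I do not anticipate a genuine obstacle; the two mild ingredients are the Krull--Schmidt property of $A\modcat$ and an elementary monotonicity remark, which I would isolate first.

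\emph{Monotonicity and reformulation.} If $A$ is an $(m,n)$-Igusa-Todorov algebra with witness module $U$, then it is an $(m',n)$-Igusa-Todorov algebra with the same $U$ for every $m'\ge m$: simply prepend zero terms $U_{m'}=\cdots=U_{m+1}=0$ to the defining exact sequence, exactness of the longer sequence being clear since the leftmost nonzero map of the original one is a monomorphism. Since $\ITdist(A)$ is the infimum of a set of nonnegative integers, it follows that $\ITdist(A)=0$ exactly when $A$ is a $(0,n)$-Igusa-Todorov algebra for some $n$, and $\ITdist(A)\le 1$ exactly when $A$ is a $(0,n)$- or a $(1,n)$-Igusa-Todorov algebra for some $n$; as a $(0,n)$-Igusa-Todorov algebra is in particular a $(1,n)$-Igusa-Todorov algebra by the monotonicity just noted, the latter reduces to: $A$ is a $(1,n)$-Igusa-Todorov algebra for some $n$.

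\emph{Part (1).} For ``if'': assuming $A$ is syzygy-finite, choose $n_0$ so that $\Omega^{n_0}(A\modcat)$ has only finitely many indecomposable summands up to isomorphism and let $U$ be their direct sum, so $\Omega^{n_0}(X)\in\add(U)$ for every $X$; the exact sequence $0\ra\Omega^{n_0}(X)\ra\Omega^{n_0}(X)\ra 0$ with identity middle map then shows $A$ is a $(0,n_0)$-Igusa-Todorov algebra, whence $\ITdist(A)=0$. For ``only if'': if $\ITdist(A)=0$, there are $n$ and $U$ with every $X$ fitting into an exact sequence $0\ra U_0\ra\Omega^n(X)\ra 0$, $U_0\in\add(U)$; this forces $\Omega^n(X)\cong U_0\in\add(U)$, so $\Omega^n(A\modcat)\subseteq\add(U)$, which has only finitely many indecomposables by Krull--Schmidt, i.e.\ $A$ is $n$-syzygy-finite.

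\emph{Part (2).} The key remark is that Definition \ref{def-sf-it}(2) of an $n_0$-Igusa-Todorov algebra coincides verbatim with Definition \ref{def-italg} taken at $m=1$; thus ``$A$ is an $n_0$-Igusa-Todorov algebra'' and ``$A$ is a $(1,n_0)$-Igusa-Todorov algebra'' are the same statement. Combined with the reformulation above, $A$ is an Igusa-Todorov algebra if and only if $A$ is a $(1,n)$-Igusa-Todorov algebra for some $n$, if and only if $\ITdist(A)\le 1$. The only step needing slight care, used throughout, is the translation between ``$\Omega^n(A\modcat)$ is representation-finite'' in the sense of Definition \ref{def-sf-it}(1) and ``$\Omega^n(A\modcat)\subseteq\add(U)$ for a single module $U$''; this is exactly where Krull--Schmidt is invoked, and since it works in both directions it is a routine matter rather than a real difficulty.
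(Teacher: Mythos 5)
Your proposal is correct and simply spells out the argument the paper summarizes as ``follows from Definition \ref{def-itdist} and Definition \ref{def-sf-it}'': the monotonicity/padding observation plus the literal coincidence of the $m=1$ case of Definition \ref{def-italg} with Definition \ref{def-sf-it}(2) are exactly the intended points. Nothing is missing.
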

	\begin{proof}
		This lemma follows from Definition \ref{def-itdist} and Definition \ref{def-sf-it}.
	\end{proof}
	At the end of this subsection, we provide an example that is not syzygy-finite.
	\begin{Bsp}{\rm
			{\rm (\cite[Example 54]{bm23})}
			Let $A =kQ/I$ be an algebra where $Q$ is
			$$\xymatrix{ 1 \ar@/^8mm/[rrr]^{\bar{\alpha}_1} \ar@/^2mm/[rrr]^{\alpha_1} \ar@/_2mm/[rrr]_{\beta_1} \ar@/_8mm/[rrr]_{\bar{\beta_1}} & &  & 2 \ar@/^8mm/[ddd]^{\bar{\alpha}_2} \ar@/^2mm/[ddd]^{\alpha_2} \ar@/_2mm/[ddd]_{\beta_2} \ar@/_8mm/[ddd]_{\bar{\beta_2}} \\ & &  &\\& & & & \\ 4 \ar@/^8mm/[uuu]^{\bar{\alpha}_4} \ar@/^2mm/[uuu]^{\alpha_4} \ar@/_2mm/[uuu]_{\beta_4} \ar@/_8mm/[uuu]_{\bar{\beta_4}} &  & &  3 \ar@/^8mm/[lll]^{\bar{\alpha}_3} \ar@/^2mm/[lll]^{\alpha_3} \ar@/_2mm/[lll]_{\beta_3} \ar@/_8mm/[lll]_{\bar{\beta_3}}}$$
			and $I = \langle \alpha_{i}\alpha_{i+1}-\bar{\alpha}_{i}\bar{\alpha}_{i+1},\ \beta_{i}\beta_{i+1}-\bar{\beta}_{i}\bar{\beta}_{i+1},\ \alpha_{i}\bar{\alpha}_{i+1},\ \bar{\alpha}_{i}\alpha_{i+1},\ \beta_{i}\bar{\beta}_{i+1},\ \bar{\beta}_{i}\beta_{i+1},\text{ for } i \in \mathbb{Z}_4, \ J^3 \rangle$.
			It follows from \cite{bm23} that $\Omega^{n}(A\modcat)$ is infinite representation type for each $n\in\mathbb{N}$. By Lemma \ref{lem-it}(1), we have $\ITdist (A) \ge 1.$
			On the other hand, by the below Theorem  \ref{theo-bound}, we get $\ITdist(A) \le \ell\ell(A)-2=1$. Also $\ell\ell(A)=3$. Thus $\ITdist(A)=1$.
	}\end{Bsp}
	
	\subsection{Finiteness of Igusa-Todorov distances}
	In this subsection, we provides an upper bound for the Igusa-Todorov distance of an Artin algebra. We first recall some basic results about the Layer lengths, as detailed in reference \cite{hlh13}.
	
	Let $A$ be an Artin algebra. For an $A$-module $M$, the radical and top of $M$ are denoted by $\rad(M)$ 
	and $\top(M)$, respectively.
	For a subclass $\mathcal{X}$ of $A\modcat$, we denote by $\add(\mathcal{X})$ 
	the subcategory
	of $A\modcat$ consisting of direct summands of
	finite direct sums of modules in $\mathcal{X}$,
	and if $\mathcal{X}=\{X\}$ for some $X\in A\modcat$, we write $\add(X):=\add(\mathcal{X})$.
	The projective dimension $\pd(\mathcal{X})$
	of $\mathcal{X}$ is defined as
	\begin{equation*}
		\pd(\mathcal{X}):=
		\begin{cases}
			\sup\{\pd M\mid M\in \mathcal{X}\}, & \text{if} \; \mathcal{X}\neq \varnothing;\\
			-1,&\text{if} \;\mathcal{X}=\varnothing.
		\end{cases}
	\end{equation*}
	
	Let $\mathcal{V}$ be a subset of all simple modules, and $\mathcal{V}'$
	the set of all the others simple modules in $A\modcat$.
	We write
	$$\mathfrak{F}(\mathcal{V}):=\{M\in A\modcat\mid \text{ there exists a chain }
	0\subseteq M_{0}\subseteq M_{1}\subseteq M_{2}\subseteq \cdots\subseteq M_{m-1}\subseteq M_{m}=M$$
	$$\text{ of submodules of  } M \text{ such that each quotients } M_{i}/M_{i-1}\in \mathcal{V}\}, \text{ and }$$
	$$\mathfrak{T}{(\mathcal{V})}:=\{M \in A\modcat \mid\top(M)\in \add(\mathcal{V}')\}.$$
	By \cite[Lemma 5.7 and Proposition 5.9]{hlh13}, 
	$(\mathfrak{T}{(\mathcal{V})}, \mathfrak{F}(\mathcal{V}))$ is a torsion pair.
	We denote by $t_{\mathcal{V}}$ the torsion radical of the torsion pair $(\mathfrak{T}{(\mathcal{V})}, \mathfrak{F}(\mathcal{V}))$.
	
	\begin{Def}{\rm (\cite{hlh13})
			The $t_{\mathcal{V}}$\textit{-radical layer length} is a function
			$\ell\ell^{t_{\mathcal{V}}}:A\modcat \ra \mathbb{N}\cup \{\infty\}$
			via $$\ell\ell^{t_{\mathcal{V}}}(M):=\inf\{i\ge 0\mid t_{\mathcal{V}}\circ F_{t_{\mathcal{V}}}^{i}(M)=0, M\in A\modcat\},$$
			where $F_{t_{\mathcal{V}}}=\rad\circ t_{\mathcal{V}}. $
		}
	\end{Def}
	Let $M$ be an $A$-module. If $\mathcal{V}=\varnothing$, then $\ell\ell(M)$ is the Loewy length of $M$.
	\begin{Theo}\label{theo-bound}
		Let $A$ be an Artin algebra and $\V$ the set of some simple modules with finite projective dimension.
		Then $\ITdist(A)\le \max\{\ell\ell^{t_{\V}}(A)-2,0\}.$ In particular, $\ITdist(A)\le \max\{\ell\ell(A)-2,0\}.$
	\end{Theo}
	\begin{proof}
		If $\ell\ell^{t_{\V}}(A)\le 2$, then
		$A$ is $(\pd(\V)+2)$-Igusa-Todorov algebra by  \cite[Theorem 1.2]{zheng21}. Then $\ITdist(A)=0$ by Lemma \ref{lem-it}. If $\ell\ell^{t_{\V}}(A)> 2$, then $A$ is $(\ell\ell^{t_{\V}}(A)-2,\pd( \V)+2)$-Igusa-Todorov algebra by \cite[Theorem 4.7]{zheng22}. Then  $\ITdist(A)\le \ell\ell^{t_{\V}}(A)-2$ by Definition \ref{def-itdist}. 
	\end{proof}
	
	\subsection{Relation with Rouquier's dimensions}
	In this subsection, we establish an upper bound for the dimension of  the singularity category using the Igusa-Todorov distance. We begin by recalling the basic definition and fundamental properties of the dimension of a triangulated category, as detailed in reference   \cite{ 0pp09,rou06,rou08}.
	
	Let $\T$ be a triangulated category and fix subcategories $\I,\I_1,\I_2$ of $\T$.
	Denote by $\langle \I \rangle_{1}$ the smallest full subcategory of $\T$ which contains $\I$ and is closed under taking finite direct sums, direct summands, and all shifts.
	Denote by $\I_{1}*\I_{2}$ by the full subcategory of all extensions between them, that is,
	$$\I_{1}*\I_{2}=\{ X\in \T \mid  X_{1} \longrightarrow X
	\longrightarrow X_{2}\longrightarrow X_{1}[1]
	\text{ with } X_{1}\in \I_{1} \text{ and }  X_{2}\in \I_{2}\}.$$
	Write $\I_{1}\diamond\I_{2}:=\langle\I_{1}*\I_{2} \rangle_{1}.$
	Then $(\I_{1}\diamond\I_{2})\diamond\I_{3}=\I_{1}
	\diamond(\I_{2}\diamond\I_{3})$ for any subcategory $\I_{3}$ of $\T$ by the octahedral axiom.
	Write
	\begin{align*}
		\langle \I \rangle_{0}:=0,\;
		\langle \I \rangle_{n+1}:=\langle \I
		\rangle_{n}\diamond\langle \I \rangle_{1}\;{\rm for\; any \;}
		n\geqslant 1.
	\end{align*}
	
	\begin{Def}{\rm (\cite[Definiton 3.2]{rou08})\label{def-dim}
			Let $\T$ be a triangulated category.
			The \textit{dimension} of $\T$, denoted by $\dim( \T)$,  is the minimal integer $d\ge 0$ such that there exists  $M\in \T$ with
			$\T=\langle M \rangle_{d+1}$. If no such $M$ exists for any $d$,
			then we set $\dim(\T)=\infty.$
		}
	\end{Def}
	
	Let $A$ be an Artin algebra over a commutative Artin ring $k$. Denote by $A\modcat$ the category of finitely generated left $A$-modules, and by $\Db{A}$ the bounded derived category. We call a complex in $\Db{A}$ \emph{perfect} if it is isomorphic to a bounded complex of finite generated projective modules.
	It is well known that a complex $\cpx{P}\in \Db{A}$ is perfect if and only if the functor $\Hom_{\Db{A}}(\cpx{P},-)$ preserves small coproducts, that is, $\Hom_{\Db{A}}(\cpx{P},\bigoplus_{i\in I}X_i)\cong \bigoplus_{i\in I}\Hom_{\Db{A}}(\cpx{P},X_i)$ for any set $I$. Let $\per(A)$ be the full subcategory of $\Db{A}$ consisting of all perfect complexes.
	Following \cite{Buch86, Orl04}, the \emph{singularity category} of $A$ is defined to be the Verdier quotient $\Dsg(A) = \Db{A}/\per(A)$. Denote by $q:  \Db{A}\rightarrow \Dsg(A)$ the quotient functor.
	
	\begin{Lem}\label{lem-sing}
		{\rm (\cite[Lemma 2.1]{chen11})}
		Let $X^\bullet$ be a complex in $\Dsg(A)$ and $r_0>0$. Then for any $r$ large enough, there exists
		a module $M$ in $\Omega^{r_0}(A\mbox{-{\rm mod}})$ such that $X^\bullet\simeq q(M)[r]$ in $\Dsg(A)$.
	\end{Lem}
	
	\begin{Lem}\label{lem-sing-ex}
		{\rm (\cite[Lemma 2.2]{chen11})}
		Let $0\rightarrow M\rightarrow P^{1-n}\rightarrow \cdots \rightarrow P^0\rightarrow N\rightarrow 0$ be an exact sequence
		with each $P^i$ projective. Then we have an isomorphism $q(N)\simeq q(M)[n]$ in $\Dsg(A)$. In particular, for an
		$A$-module $M$, we have a natural isomorphism $q(\Omega^n(M))\simeq q(M)[-n]$ in $\Dsg(A)$.
	\end{Lem}
	
	\begin{Theo}\label{sing-it}
		Let $A$ be an Artin algebra. Then $\dim(\Dsg(A))\le \ITdist(A)$.
	\end{Theo}
	\begin{proof}
		Let $X^{\bullet}\in \Dsg(A)$. By Lemma \ref{lem-sing}, there an $A$-module $M$ such that $\cpx{X}\simeq q(M)[r]$ in $\Dsg(A)$ for some $r\in \mathbb{Z}$. 
		Set $m:=\ITdist(A)$.
		By Definition \ref{def-itdist}, $A$ is an $(m,n)$-Igusa-Todorov algebra for some $n$.  
		By Lemma \ref{lem-sing-ex}, $q(\Omega^n(M))\simeq q(M)[-n]$ in $\Dsg(A)$. Then $\cpx{X}\simeq q(\Omega^n(M))[n+r]$ in $\Dsg(A)$.
		By Definition \ref{def-itdist},  there is a module
		$U\in A\modcat$ such that there exists an exact sequence
		\begin{equation}\label{eq-sing}
			0\lra U_m\lra U_{m-1}\lra \cdots \lra U_1\lra U_0\lra \Omega^n(M)\lra 0
		\end{equation}
		where $U_i \in \add(U)$ for each $0 \le i \le m $. 
		By the above exact sequence (\ref{eq-sing}), we can get  the following short exact sequences
		\begin{equation*}
			\xymatrix@C=1em@R=0.1em{
				0 \ar[r] &K_{1} \ar[r] &U_{0}\ar[r] & \Omega^{n}(M) \ar[r] & 0\\
				0\ar[r] & K_{2}  \ar[r] & U_{1} \ar[r] & K_{1}  \ar[r] & 0\\
				&& \vdots&\\
				0 \ar[r] &  K_{m-1} \ar[r] &  U_{m-2}\ar[r] & K_{m-2} \ar[r] &  0\\
				0 \ar[r] &  U_{m}\ar[r] &  U_{m-1}\ar[r] &  K_{m-1} \ar[r] &  0.
			}
		\end{equation*}
		Then we have the following triangles in $\Db{A}$:
		\begin{equation*}
			\xymatrix@C=1em@R=0.1em{
				K_{1} \ar[r] &U_{0}\ar[r] & \Omega^{n}(M) \ar[r] & K_{1}[1]\\
				K_{2}  \ar[r] & U_{1} \ar[r] & K_{1}  \ar[r] & K_{2}[1]\\
				& \vdots&\\
				K_{m-1} \ar[r] &  U_{m-2}\ar[r] & K_{m-2} \ar[r] & K_{m-1} [1]\\
				U_{m}\ar[r] &  U_{m-1}\ar[r] &  K_{m-1} \ar[r] &   U_{m}[1].
			}
		\end{equation*}
		Thus we have the following triangles in $\Dsg(A)$:
		\begin{equation*}
			\xymatrix@C=1em@R=0.1em{
				q(K_{1}) \ar[r] &q(U_{0})\ar[r] & q(\Omega^{n}(M)) \ar[r] & q(K_{1})[1]\\
				q(K_{2})  \ar[r] & q(U_{1}) \ar[r] & q(K_{1}) \ar[r] & q(K_{2})[1]\\
				& \vdots&\\
				q(K_{m-1}) \ar[r] &  q(U_{m-2})\ar[r] & q(K_{m-2}) \ar[r] & q(K_{m-1} )[1]\\
				q(U_{m})\ar[r] &  q(U_{m-1})\ar[r] & q(K_{m-1})\ar[r] &   q(U_{m})[1].
			}
		\end{equation*}
		Moreover, we can get the following triangles in $\Dsg(A)$:
		{\footnotesize
			\begin{equation*}
				\xymatrix@C=1em@R=0.1em{
					q(U_{0})[n+r]\ar[r] & q(\Omega^{n}(M))[n+r] \ar[r] & q(K_{1})[n+r+1] \ar[r] &q(U_{0})[n+r+1]\\
					q(U_{1}) [n+r+1]\ar[r] & q(K_{1})[n+r+1] \ar[r] & q(K_{2})[n+r+2] \ar[r] &q(U_{1}) [n+r+2]\\
					& \vdots&&\\
					q(U_{m-2})[n+r+m-2]\ar[r] & q(K_{m-2}) [n+r+m-2]\ar[r] & q(K_{m-1} )[n+r+m-1]\ar[r] &q(U_{m-2})[n+r+m-1]\\
					q(U_{m-1})[n+r+m-1]\ar[r] & q(K_{m-1})[n+r+m-1]\ar[r] &   q(U_{m})[n+r+m]\ar[r]&q(U_{m-1})[n+r+m].
				}
		\end{equation*}}
		Hence $\cpx{X}\simeq q(\Omega^{n}(M))[n+r] \in \langle q(U) \rangle_{m+1}$ and $\dim(\Dsg(A))\le m= \ITdist(A)$.
	\end{proof}
	
	Combining Theorem \ref{theo-bound} with Theorem \ref{sing-it}, we then obtain the following known result in \cite[Theorem 1.2]{zh20}, which is a generalization of \cite[Proposition 3.7]{rou06}. 
	\begin{Koro} 
		Let $A$ be an Artin algebra. Then $\dim(\Dsg(A))\le \max\{\ell\ell^{t_{\V}}(A)-2,0\}.$
	\end{Koro}
	
	\subsection{Relation with weak resolution dimensions}
	
	This subsection is devoted to establishing a link between the weak resolution dimensions and the Igusa–Todorov distances. We begin by recalling the notion of weak resolution dimension, which was introduced by Oppermann in \cite{0pp09} in order to provide a lower bound for the representation dimension. Note that this definition differs from the one introduced by Iyama in \cite{Iyama2003}.
	
	\begin{Def}\label{wrd-def}{\rm (\cite[Defition 2.2 and Defition 2.4]{0pp09})
			Let $A$ be an Artin algebra and $M$ an $A$-module.
			Then the \emph{$M$-resolution dimension} of an $A$-module $X$ is defined to be
			\begin{align*}
				M\mbox{-}\rd(X)&:=\min \{n\in\mathbb{N} \mid \mbox{ there is an exact sequence } \\
				&0 \ra M_{n} \ra M_{n-1}\ra\cdots \ra M_{0}\ra X\ra 0\mbox{ with all } M_{i}\in \add(M) \text{ such that}\\
				&0 \ra \Hom_A(M, M_{n})  \ra\cdots \ra \Hom_A(M, M_{0})  \ra \Hom_A(M, X) \ra 0\text{ is exact} \};
			\end{align*}
			and the \emph{$M$-resolution dimension} a subcategory $\mathscr{X}$ of $A\modcat$ is defined to be
			\begin{center}
				$M$-$\rd(\mathscr{X}):=\sup\{M$-$\rd(X) \mid X\in \mathscr{X}\}$;
			\end{center}
			the \emph{resolution dimension} of a subcategory $\mathscr{X}$ of $A\modcat$ is defined to be
			\begin{center}
				$\rd(\mathscr{X}):=\min\{M$-$\rd(A)\mid M\in A\modcat\}$.
			\end{center}
			The \emph{weak $M$-resolution dimension} of an $A$-module $X$ is defined to be
			\begin{align*}
				M\mbox{-}\wrd(X):=\min& \{n\in\mathbb{N} \mid \mbox{ there is an exact sequence } \\
				&0 \ra M_{n} \ra M_{n-1}\ra\cdots \ra M_{0}\ra X\ra 0 \\
				&\mbox{with all } M_{i}\in \add(M)\};
			\end{align*}
			and the \emph{weak $M$-resolution dimension} a subcategory $\mathscr{X}$ of $A\modcat$ is defined to be
			\begin{center}
				$M$-$\wrd(\mathscr{X}):=\sup\{M$-$\wrd(X) \mid X\in \mathscr{X}\}$;
			\end{center}
			the \emph{weak resolution dimension} of a subcategory $\mathscr{X}$ of $A\modcat$ is defined to be
			\begin{center}
				$\wrd(\mathscr{X}):=\min\{M$-$\wrd(A)\mid M\in A\modcat\}$.
			\end{center}
	}\end{Def}
	
	\begin{Def}{\rm (\cite[Chapter III, Section 5]{aus71})}
		Let $A$ be an Artin algebra. If $A$ is not semisimple, then the representation dimension of $A$ is defined by $$\rep(A):=\min\{\gd(\End_A(M))\mid M \text{ is generator-cogenerator}\}.$$
		Here an $A$-module $M$ is called a \textit{generator-cogenerator} if every indecomposable projective module and also every indecomposable injective module is isomorphic to a summand of $M$.
	\end{Def}
	When $A$ is semisimple, Auslander assigns the representation dimension $0$ whereas we define it to be $2$ here. 
	
	\begin{Lem}{\rm (\cite[Lemma 2.1]{ehis04})}\label{lem-rep-wrd}
		Let $A$ be an Artin algebra. Then
		$$\rep(A):=\min\{M\text{-}\rd(A\modcat)\mid M \text{ is generator-cogenerator}\}+2.$$ Moreover, $\wrd(A\modcat)+2\le  \rep(A).$
	\end{Lem}
	
	Recall that a complex $\cpx{X}=(X^i,d_X^i)\in\mathscr{C}(A)$ is said to be \emph{exact} if the cohomology vanishes in all degrees, i.e., $H^i(\cpx{X})=0$ for all $i$. It is called \emph{totally exact} if it is exact and the complex $\Hom_A(\cpx{X},A)$ is exact. Let $X$ be an $A$-module. An exact complex $\cpx{P}\in \mathscr{C}(\pmodcat{A})$ is called a \emph{complete projective resolution }of $X$ if ${\rm Ker}(d_P^0)= X$. By a \emph{total projective resolution} of $X$ we mean a totally exact, complete projective resolution of $X$.  Following \cite{EJ95}, the module $_AX$ is called \emph{Gorenstein-projective} if it admits a total projective resolution. In $A\modcat$, Gorenstein-projective modules concide with the modules of $G$-dimension $0$ in the sense of Auslander-Bridge \cite{AB69}.
	We denote by $A\text{-}\mathrm{Gproj}$ the full subcategory of $A\modcat$ consisting of all finitely generated Gorenstein-projective $A$-modules. It is known that $A\text{-}\mathrm{Gproj}$ contains $\pmodcat{A}$ and $\Omega^i(A\text{-}\mathrm{Gproj})=A\text{-}\mathrm{Gproj}$ for $i\ge 0$.
	
	Recall that an Artin algebra $A$ is
	\emph{Gorenstein} provided that the
	regular module $A$ has finite injective dimension on both sides (\cite{Hap2}).  It follows from \cite[Lemma 6.9]{AR} that for a Gorenstein algebra $A$ we have $\id
	(_AA)=\id(A_A)$. If $\id
	(_AA)\le s$, we say
	that $A$ is \emph{$s$-Gorenstein}.
	
	\begin{Lem}\label{lem-gor}{\rm (\cite[Theorem 3.1]{am02})}
		Let $A$ be an $s$-Gorenstein algebra. Then
		$\Omega^s(A\modcat)= A\text{-}\mathrm{Gproj}$.
	\end{Lem}
	
	\begin{Theo}\label{theo-wrd}
		Let $A$ be a Gorenstein algebra. Then 
		$$\ITdist(A)=\wrd(A\text{-}\mathrm{Gproj}).$$ 
		In particular, if $A$ is selfinjective, then $\ITdist(A)=\wrd(A\modcat)$.
	\end{Theo}
	\begin{proof}
		Assume $A$ be an $s$-Gorenstein algebra. 
		
		Firstly, we prove $\ITdist(A)\le \wrd(A\text{-}\mathrm{Gproj})$. Indeed, since $A$ is an $s$-Gorenstein algebra, for each $A$-module $M$, it follows from Lemma \ref{lem-gor} that $\Omega^s(M)\in A\text{-}\mathrm{Gproj}$. 
		Set $w:=\wrd(A\text{-}\mathrm{Gproj})$. By Definition \ref{wrd-def}, there is a module
		$W\in A\modcat$ such that we have an exact sequence
		$$0\lra W_w\lra W_{w-1}\lra \cdots \lra W_1\lra W_0\lra \Omega^s(M)\lra 0,$$
		where $W_i \in \add(W)$ for each $0 \le i \le w $.
		Then, by Definition \ref{def-itdist}, we obtain
		$\ITdist(A)\le w=\wrd(A\text{-}\mathrm{Gproj})$.
		
		Next, we prove $\ITdist(A)\ge \wrd(A\text{-}\mathrm{Gproj})$. Set $m:=\ITdist(A)$.
		By Definition \ref{def-itdist}, $A$ is an $(m,n)$-Igusa-Todorov algebra for some $n$.  Then there is a module
		$U\in A\modcat$ such that for each module $X\in A\modcat$ there exists an exact sequence
		$$0\lra U_m\lra U_{m-1}\lra \cdots \lra U_1\lra U_0\lra \Omega^n(X)\lra 0$$
		where $U_i \in \add(U)$ for each $0 \le i \le m $. Notice $\Omega^n(A\text{-}\mathrm{Gproj})=A\text{-}\mathrm{Gproj}$. By Definition \ref{wrd-def}, $\wrd(A\text{-}\mathrm{Gproj})\le m=\ITdist(A)$.
	\end{proof}
	
	The following example implies the Igusa-Todorov distance may be very large.
	\begin{Bsp}\label{ex-ex}
		{\rm
		Let $n\geqslant 1$ be an integer and $A:=\bigwedge(k^n)$ exterior algebras.
		By \cite[Theorem 4.1]{rou06},
		$$\dim(\stmodcat{A})=\rep(A)-2=n-1.$$ 
		Here $\stmodcat{A}$ is the stable module category of $A$ (see Section \ref{sect-st} for
		a detailed discussion).
		By Theorem \ref{sing-it}, $\dim(\Dsg(A))\le \ITdist(A)$. Note that $A$ is selfinjective.  It follows from $\stmodcat{A}\simeq \Dsg(A)$ as triangulated categories that $\dim(\stmodcat{A})\le \ITdist(A)$. 
		By Theorem \ref{theo-wrd}, $\ITdist(A)=\wrd(A\modcat)$.
		By Lemma \ref{lem-rep-wrd}, $\wrd(A\modcat)\le  \rep(A)-2.$ Then we have 
		$$n-1=\dim(\stmodcat{A})\le \ITdist(A)=\wrd(A\modcat)\le \rep(A)-2=n-1.$$ 
		Thus $$\dim(\stmodcat{A})= \ITdist(A)=\wrd(A\modcat)=\rep(A)-2=n-1.$$ 
	}
	\end{Bsp}
	
	\section{Stable equivalences and  Igusa-Todorov distances}\label{sect-st}
	
	In this section, we prove that stable equivalences of algebras without nodes preserve their Igusa-Todorov distances. We first recall some basic results about the stable equivalence of Artin algebras , as detailed in reference \cite{ars97,c21,Guo05,Xi2022}.
	
	Let $A$ be an Artin algebra. Recall that a simple $A$-module $S$ is called a \emph{node} of $A$ if it is  neither projective nor injective, and the middle term of the almost split sequence starting at $S$ is projective. An $A$-module $X$ is called a \emph{generator} if $A\in\add(X)$.  Denote by $\stmodcat{A}$ the stable module category of $A$ modulo projective modules. The objects are the same as the objects of $A\modcat$, and for two modules $X, Y$ in $\stmodcat{A}$, their homomorphism set is $\underline{\Hom}_A(X,Y):=\Hom_A(X,Y)/\mathscr{P}(X,Y)$, where $\mathscr{P}(X,Y)$ is the subgroup of $\Hom_A(X,Y)$ consisting of the homomorphisms factorizing through a projective $A$-module. This category is usually called the \emph{stable module category} of $A$. Dually, We denoted by  $A\mbox{-}\overline{\mbox{mod}}$ the stable module category of $A$ modulo injective modules.
	Two algebras $A$ and $B$ are said to be \emph{stably equivalent} if the two stable categories $A\stmodcat$ and $B\stmodcat$ are equivalent as additive categories.
	
	Now, let $A$ and $B$ be Artin algebras without nodes. Suppose that $F: \stmodcat{A}\ra \stmodcat{B}$ is an equivalence.
	Then there are one-to-one correspondences
	$$F: A\modcat_{\mathscr{P}}\lra B\modcat_{\mathscr{P}},$$
	where $A\modcat_{\mathscr{P}}$ stands for the full subcategory of $A\modcat$ consisting of modules without nonzero projective summands.
	We also use $F$ to denote the induce map $A\modcat\ra B\modcat$ which takes projectives to zero.
	
	Recall that an exact sequence $0\ra X\lraf{f} Y\ra Z\lraf{g} 0$ in $A\modcat$ is called \emph{minimal} (\cite{MV1990}) if it has no a split exact sequence as a direct summand, that is, there does not exist isomorphisms $u$, $v$, $w$ such that the following diagram
	$$\xymatrix{
		0\ar[r]
		&X\ar[r]^{f}\ar[d]^{u}
		&Y\ar[r]^g\ar[d]^{v}
		&Z\ar[r]\ar[d]^{w}
		&0\\
		0\ar[r]&X_1\oplus
		X_2\ar[r]^{\text{
				$\left(
				\begin{smallmatrix}
					f_1&0\\
					0&f_2
				\end{smallmatrix}\right)
				$}}
		&Y_1\oplus Y_2\ar[r]^{\text{
				$\left(
				\begin{smallmatrix}
					g_1&0\\
					0&g_2
				\end{smallmatrix}\right)
				$}}&Z_1\oplus Z_2\ar[r]&0
	}$$
	is row exact and commute, where $Y_2\neq 0$ and $0\ra X_2\lraf{f_2} Y_2\lraf{g_2}Z_2\ra 0$ is split. The next lemma shows that the stable functor has certain ``exactness" property.
	
	\begin{Lem}\label{lem-seq}{\rm\cite[Theorem 1.7]{MV1990}} Let $0\ra X\oplus P_1\lraf{f} Y\oplus P\lraf{g} Z\ra 0$ be a minimal exact sequence of $A$-modules, where $X,Y,Z\in A\modcat_{\mathscr{P}}$ and $P_1,P\in \pmodcat{A}$. Then
		there is a minimal exact sequence
		$$0\lra F(X)\oplus Q_1\lraf{f'} F(Y)\oplus Q\lraf{g'}F(Z)\lra 0$$
		in $B\modcat$ with $Q_1,Q\in \pmodcat{B}$ and $\underline{g'} =F(\underline{g})$. In particular, $\Omega_{B}(F(Z))\simeq F(\Omega_A(Z))$ in $\stmodcat{B}$ for $Z\in A\modcat_{\mathscr{P}}$.
	\end{Lem}
	
	\begin{Lem}\label{lem-mwrd}
		Let $_AU$ be an $A$-module and $V:=F(U)\oplus B$. Then for each $X\in A\modcat$, we have $$V\text{-}\wrd(F(X))\le U\text{-}\wrd(X).$$
	\end{Lem}
	\begin{proof}
		This can be proved by induction on $U\text{-}\wrd(X)$. 
		In fact, if $U\text{-}\wrd(X)=0$, then $X\simeq U_0$ in $A\modcat$ with $U_0\in \add(U)$. 
		Thus $F(X)\simeq F(U_0)$ in $B\modcat$ and $V\text{-}\wrd(F(X))=0$. Now suppose that for each $_AX$ with $0\le U\mbox{-}\wrd(_AX)\le m-1$, we have $V\mbox{-}\wrd(_BF(X))\le U\mbox{-}\wrd(_AX)$. We shall show the conclusion for $_AX$ with $U\mbox{-}\wrd(_AX)=m$. By Definition \ref{wrd-def}, there exists an exact sequence
		$$0\lra U_m\lraf{f_m} U_{m-1} \lraf{f_{m-1}} \cdots \lraf{f_1} U_0 \lraf{f_0} X\lra 0$$
		in $A\modcat$ with $U_i\in\add(_AU)$ for $0\le i\le m$. Let $K$ is the kernel of $f_0$. Then $U\mbox{-}\wrd(_AK)\le m-1$ and we have a
		short exact sequence \begin{align}\label{kmk}
			0\lra K\lra U_0\lra X \lra 0 
		\end{align}
		in $A\modcat$. 
		We can decompose the short exact sequence (\ref{kmk}) as the direct sums of the following two short exact sequences
		\begin{align}\label{uwu}
			0\lra K_1\lra W_1\lra L_1\lra 0,
		\end{align}
		\begin{align}\label{vzv}
			0\lra K_2\lra W_2\lra L_2\lra 0
		\end{align}
		in $A\modcat$, namely there are isomorphisms $\lambda$, $\mu$, $\nu$ with the following commutative diagram
		\begin{align}\label{2row}
			\xymatrix@C=0.5em@R=0.5em{
				0\ar[rr]
				&&K\ar[rr]\ar[dd]^{\lambda}
				&&U_0\ar[rr]\ar[dd]^{\mu}
				&&X\ar[rr]\ar[dd]^{\nu}
				&&0\\
				&&&&&&&&\\
				0\ar[rr]
				&&K_1\oplus
				K_2\ar[rr]
				&&W_1\oplus W_2\ar[rr]
				&&L_1\oplus L_2\ar[rr]
				&&0,
		}\end{align}
		in $A\modcat$ such that (\ref{uwu}) is minimal and (\ref{vzv}) is split.
		We write $K_1:=K^{^{\mathscr{P}}}_1\oplus P_1$ and $W_1:=W^{^{\mathscr{P}}}_1\oplus P$ with $K^{^{\mathscr{P}}}_1,W^{\mathscr{P}}_1\in A\modcat_{\mathscr{P}}$ and $P_1, P\in \pmodcat{A}$.
		Then we can write the following short exact sequence
		\begin{align}\label{uuu}
			0\lra K^{^{\mathscr{P}}}_1\oplus P_1\lra W^{^{\mathscr{P}}}_1\oplus P\lra L_1\lra 0
		\end{align}
		for the sequence (\ref{uwu}). Since the sequence (\ref{uwu}) is minimal, we know that the sequence (\ref{uuu}) is minimal and $L_1 \in A\modcat_{\mathscr{P}}$. By Lemma \ref{lem-seq}, we have the following minimal exact sequence
		\begin{align}\label{fff} 
			0\lra F(K^{^{\mathscr{P}}}_1)\oplus Q_1\lra F(W^{^{\mathscr{P}}}_1)\oplus Q\lra F(L_1)\lra 0
		\end{align}
		in $B\modcat$ such that $Q_1,Q\in \pmodcat{B}$. 
		
		By induction hypothesis, it follows from $U\mbox{-}\wrd(_AK)\le m-1$  that $V\mbox{-}\wrd(F(K))\le m-1$. Then there exists an exact sequence
		\begin{align}\label{nextk}
			0\lra V_{m-1} \lraf{g_{m-1}} V_{m-2}\lra \cdots \lra V_1\lraf{g_1} V_0 \lraf{g_0} F(K)\lra 0
		\end{align}
		in $B\modcat$ with $V_i\in\add(_BV)$ for $0\le i\le m-1$. 
		Since $\lambda$ is isomorphic, $K\simeq K_1\oplus K_2$ in $A\modcat$. Also $K_1=K_1^{\mathscr{P}}\oplus P_1$ and $P_1\in\pmodcat{A}$. Then $F(K)\simeq F(K_1\oplus K_2)\simeq F(K_1^{\mathscr{P}})\oplus F(K_2)$ in $B\modcat$.
		By the diagram \ref{2row}, the exact sequences (\ref{fff}) and (\ref{nextk}), we get the following long exact sequence
		{\footnotesize
			\begin{align}\label{long-u}
				0\lra V_{m-1} \lraf{g_{m-1}} V_{m-2}\lra \cdots \lra V_1\lraf{g_1} V_0\oplus Q_1 \lra F(W^{^{\mathscr{P}}}_1)\oplus Q\oplus F(K_2)\oplus F(L_2)\lra F(L_1)\oplus F(L_2)\lra 0
		\end{align}}
		Since $\mu$ is isomorphic, $U_0\simeq W_1\oplus W_2$. Since (\ref{vzv}) is split, $W_2\simeq K_2\oplus L_2$. It follows from $U_0\in \add(_AU)$ and $W_1=W^{^{\mathscr{P}}}_1\oplus P$ that $W^{^{\mathscr{P}}}_1,K_2,L_2\in \add(_AU))$. Then $F(W^{^{\mathscr{P}}}_1),F(K_2),F(L_2)\in \add(_BV)$. Since $\nu$ is isomorphic, $X\simeq L_1\oplus L_2$ in $A\modcat$. Then $F(X)\simeq F(L_1)\oplus F(L_2)$ in $B\modcat$.  By the sequence (\ref{long-u}), $V\mbox{-}\wrd(X)\le m$. Hence our claim is proved.
		
	\end{proof}
	\begin{Theo}\label{st-thm}
		Let $A$ and $B$ be stably equivalent Artin algebras without nodes. Then $$\ITdist(A)=\ITdist(B).$$
	\end{Theo}
	\begin{proof}
		We first prove $\ITdist(B)\le \ITdist(A)$.  For $Y\in B\modcat$, we write $Y=Y^{\mathscr{P}}\oplus Q'$ with $Y^{\mathscr{P}}\in B\modcat_{\mathscr{P}}$ and $Q'\in \pmodcat{B}$. Since $F: A\modcat_{\mathscr{P}}\ra B\modcat_{\mathscr{P}}$ is an one-to-one correspondence, there exists $X\in A\modcat_{\mathscr{P}}$ such that $F(X)\simeq Y^{\mathscr{P}}$ as $B$-modules.
		
		Set $m:=\ITdist(A)$. By Definition \ref{def-itdist}, $A$ is a $(m,n)$-Igusa-Todorov algebra for some $n$. By Definition \ref{def-italg}, there is an $(m,n)$-Igusa-Todorov module $_AU$ such that for each $A$-module $X$, we have the following exact sequence
		\begin{align*}
			0\lra U_m\lra \cdots \lra U_1\lra U_0\lra \Omega_A^n(X)\lra 0
		\end{align*}
		in $A\modcat$ with $U_i\in \add(_AU)$ for $0\le i\le m$. 
		Then $U\text{-}\wrd(\Omega_A^n(X))\le m$.
		Define $V:=F(U)\oplus B$. 
		By Lemma \ref{lem-mwrd}, $V\text{-}\wrd(F(\Omega_A^n(X))\le m$.
		Then there exists an exact sequence
		\begin{align}\label{nextk-2}
			0\lra V_{m} \lra V_{m-1}\lra \cdots \lra V_1\lra V_0 \lra  F(\Omega_A^n(X))\lra 0
		\end{align}
		in $B\modcat$ with $V_i\in\add(_BV)$ for $0\le i\le m$. 
		By Lemma \ref{lem-seq}, $F(\Omega_A^n(X))\simeq \Omega_B^n(F(X))$ in $\stmodcat{B}$. Then there are projective $B$-modules $Q$ and $Q_1$ such that $F(\Omega_A^n(X))\oplus Q\simeq \Omega_B^n(F(X))\oplus Q_1$ in $B\modcat$. 
		By the sequence \ref{nextk-2}, we have an exact sequence
		\begin{align}\label{nextk-3}
			0\lra V_{m} \lra V_{m-1}\lra \cdots \lra V_1\lra V'_0\oplus Q' \lra \Omega_B^n(F(X))\lra 0
		\end{align}
		where $V'_0$ is a direct sumand of $V_0$ and $Q'\in \pmodcat{B}$.
		Then $V\text{-}\wrd(\Omega_B^n(F(X))\le m$. 
		Note that $Y=Y^{\mathscr{P}}\oplus Q'$ and $F(X)\simeq Y^{\mathscr{P}}$. 
		Thus $V\text{-}\wrd(Y)\le m$. 
		Hence $B$ is a $(m,n)$-Igusa-Todorov algebra and $\ITdist(B)\le m=\ITdist{A}$.
		
		Similarly, we have $\ITdist{A}\le \ITdist(B)$. 
		Thus $\ITdist{A}= \ITdist(B)$.
	\end{proof}
	
	\section{Singular equivalences and  Igusa-Todorov distances}\label{sect-t-level}
	
	In this section, we shall prove that the Igusa-Todorov distance is an invariant under singular equivalence of Morita type with level. We first recall some basic results about the singular equivalences of Morita type with level, as detailed in reference \cite{Wang15}.
	
	Let $A$ be a finite dimensional algebra and $A^e = A\otimes _k A^{\opp}$ the enveloping algebra of $A$. We identify
	$A$-$A$-bimodules with left $A^e$-modules. Denote by $\Omega _{A^e}(-)$ the syzygy functor on the
	stable category $A^e\stmodcat$ of $A$-$A$-bimodules. The following terminology is
	due to Wang \cite{Wang15}.
	
	\begin{Def}\label{def-level}
		{\rm Let $A$ and $B$ be finite dimensional algebras. Let $_AM_B$ and $_BN_A$ be an $A$-$B$-bimodule and a $B$-$A$-bimodule,
			respectively, and let $l \geq 0$. We say $(M,N)$ defines a {\it singular equivalence
				of Morita type with level} $l$, provided that the following conditions are satisfied:
			
			(1) The four one-sided modules $_AM$, $M_B$, $_BN$ and $N_A$ are all finitely generated projective.
			
			(2) There are isomorphisms $M\otimes _B N \simeq \Omega _{A^e}^l(A)$ and
			$N\otimes _A M\simeq \Omega _{B^e}^l(B)$ in
			$A^e\stmodcat$ and $B^e\stmodcat$, respectively.}
	\end{Def}
	
	\begin{Lem}\label{lem-exact}
		Let $A$ and $B$ be finite dimensional algebras, and $F:A\modcat\ra B\modcat$ be an exact functor. If $F$ preserves projective modules, then for each $A$-module $X$ and each non-positive integer $n$, we have $F(\Omega_A^n(X))\simeq \Omega^n_B(F(X))\oplus Q$ in $B\modcat$ for some projective $B$-module $Q$.
	\end{Lem}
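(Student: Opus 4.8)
The plan is to reduce the assertion to the case $n=1$ and then induct on $n$; the case $n=0$ is trivial, since $\Omega_A^0=\Omega_B^0=\Id$ and one may take $Q=0$, so the non-vacuous situation is $n\geq 1$, where $\Omega_A^n(X)$ is a genuine iterated syzygy. Throughout I would use that $\Omega_B$ is additive and annihilates projective $B$-modules, together with the Krull--Schmidt property of $B\modcat$.

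For $n=1$ I would start from a projective cover sequence $0\to \Omega_A(X)\to P\to X\to 0$ with $P\in\pmodcat{A}$. Applying the exact functor $F$ produces a short exact sequence $0\to F(\Omega_A(X))\to F(P)\to F(X)\to 0$ in $B\modcat$ with $F(P)$ projective, by hypothesis. Comparing this sequence, via Schanuel's lemma, with a projective cover $0\to \Omega_B(F(X))\to Q_0\to F(X)\to 0$ of $F(X)$ gives
\[
F(\Omega_A(X))\oplus Q_0\;\simeq\;\Omega_B(F(X))\oplus F(P).
\]
The decisive point is that the projective cover $Q_0\twoheadrightarrow F(X)$ splits off of any projective presentation of $F(X)$, in particular of $F(P)\twoheadrightarrow F(X)$, so $F(P)\simeq Q_0\oplus Q'$ for some $Q'\in\pmodcat{B}$; substituting and cancelling $Q_0$ by Krull--Schmidt then yields $F(\Omega_A(X))\simeq \Omega_B(F(X))\oplus Q'$.

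For the inductive step I would assume $F(\Omega_A^{n-1}(X))\simeq \Omega_B^{n-1}(F(X))\oplus R$ with $R$ projective, apply the case $n=1$ to the module $\Omega_A^{n-1}(X)$, and then use additivity of $\Omega_B$ together with $\Omega_B(R)=0$ to obtain
\[
F(\Omega_A^{n}(X))\simeq \Omega_B\bigl(F(\Omega_A^{n-1}(X))\bigr)\oplus(\text{proj.})\simeq \Omega_B\bigl(\Omega_B^{n-1}(F(X))\bigr)\oplus \Omega_B(R)\oplus(\text{proj.})\simeq \Omega_B^{n}(F(X))\oplus(\text{proj.}),
\]
completing the induction. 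Alternatively one can argue in one stroke: $F$ carries a minimal projective resolution $\cpx{P}$ of $X$ to a (generally non-minimal) projective resolution $F(\cpx{P})$ of $F(X)$ --- exactness of $F$ gives the exactness, and $F(\pmodcat{A})\subseteq\pmodcat{B}$ gives the projectivity --- whose $n$-th syzygy is exactly $F(\Omega_A^n(X))$; then one invokes the standard comparison of syzygies taken along different projective resolutions.

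I expect the only real obstacle to be that last comparison, namely that a syzygy computed from a non-minimal projective resolution agrees with the minimal syzygy up to a projective direct summand. This is where Schanuel's lemma, the Krull--Schmidt theorem and the splitting-off of projective covers from projective presentations all enter, and all are legitimate here because $B$ is an Artin algebra and every module in sight is finitely generated. It is worth emphasising that $F$ need not preserve minimality of resolutions, which is precisely why the projective summand $Q$ in the statement genuinely appears and cannot in general be removed.
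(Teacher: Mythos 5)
The paper states Lemma~\ref{lem-exact} without proof, so there is no argument of the authors to compare yours against; what follows is therefore just a verification of your proposal, which is correct. Two remarks. First, you have tacitly fixed a typo in the statement: ``non-positive integer $n$'' should read ``non-negative'' --- as printed it would refer to cosyzygies, about which the hypotheses say nothing, and in the proof of Theorem~\ref{thm-level} the lemma is invoked with $n$ the non-negative integer from the definition of an $(m,n)$-Igusa--Todorov algebra. Second, your $n=1$ case is exactly the standard fact that over an Artin algebra the kernel of any epimorphism $P'\twoheadrightarrow M$ from a finitely generated projective onto $M$ is $\Omega(M)\oplus Q$ for a projective $Q$; your derivation via Schanuel's lemma, the splitting of the projective cover off $P'\twoheadrightarrow M$, and Krull--Schmidt cancellation is a correct proof of it, and the inductive step (using additivity of $\Omega_B$ and $\Omega_B(\text{proj})=0$) as well as the one-stroke resolution-level argument are both complete and routine. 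Your closing point --- that $F$ need not carry a minimal resolution to a minimal one, which is precisely why the projective summand $Q$ cannot be dropped --- identifies the only genuine subtlety.
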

	
	\begin{Theo}\label{thm-level}
		Let $A$ and $B$ be finite dimensional algebras
		If $A$ and $B$ are singularly equivalent of Morita type with level, then $$\ITdist(A)=\ITdist(B).$$
	\end{Theo}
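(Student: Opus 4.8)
The plan is to show that a singular equivalence of Morita type with level induces, after possibly adding projective summands, a bijection between syzygies of modules over $A$ and syzygies of modules over $B$ that is compatible with short exact sequences, so that an $(m,n)$-Igusa-Todorov presentation over one algebra transports to one over the other. Concretely, let $(M,N)$ define a singular equivalence of Morita type with level $l$, and consider the tensor functors $F := N\otimes_A - \colon A\modcat\to B\modcat$ and $G := M\otimes_B - \colon B\modcat\to A\modcat$. Since $_BN$, $N_A$, $_AM$, $M_B$ are all one-sided projective, both $F$ and $G$ are exact and send projectives to projectives. Moreover, from condition (2) of Definition~\ref{def-level} one has $GF(X) = M\otimes_B N\otimes_A X \cong \Omega_{A^e}^l(A)\otimes_A X$ in the stable category, and a standard computation (tensoring a minimal projective bimodule resolution of $A$ with $X$) gives $GF(X)\cong \Omega_A^l(X)\oplus(\text{projective})$ in $A\modcat$, up to an extra projective error term; symmetrically $FG(Y)\cong \Omega_B^l(Y)\oplus(\text{projective})$. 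These identifications are the engine of the whole argument.

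Next I would record the key homological consequence via Lemma~\ref{lem-exact}: for any $A$-module $X$ and any $n\ge 0$ (the lemma is stated for non-positive $n$, but for $A$-modules positive $n$ is handled by the usual syzygy functor), $F(\Omega_A^n(X))\cong \Omega_B^n(F(X))\oplus Q$ with $Q$ projective over $B$, and similarly for $G$. Combining this with the previous paragraph: if $A$ is an $(m,n)$-Igusa-Todorov algebra witnessed by a module $U$, then for any $B$-module $Y$ apply $G$ to the defining exact sequence $0\to U_m\to\cdots\to U_0\to\Omega_A^n(X)\to 0$ with $X := F(Y)$; because $G$ is exact and preserves projectives, one obtains an exact sequence over $A$ terminating in $G(\Omega_A^n(F(Y)))\cong \Omega_B^{?}(\cdots)$ — here I use $G\Omega_A^n F \cong \Omega_A^n GF \cong \Omega_A^n(\text{shift by }\Omega^l)$ together with the syzygy identities — after applying $F$ again and absorbing all projective summands, one arrives at an exact sequence $0\to V_m\to\cdots\to V_0\to\Omega_B^{n+l}(Y)\to 0$ with each $V_i\in\add(F(U)\oplus B)$. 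This exhibits $B$ as an $(m,n+l)$-Igusa-Todorov algebra, so $\ITdist(B)\le\ITdist(A)$; by symmetry $\ITdist(A)\le\ITdist(B)$.

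The main obstacle I anticipate is bookkeeping the projective error terms: each application of $F$, $G$, or a syzygy functor introduces an unnamed projective summand, and one must check that adjoining $_BB$ (resp. $_AA$) to the Igusa-Todorov module once and for all absorbs all of them uniformly in $X$ — i.e. that $\add(F(U)\oplus B)$ really is closed under all the manipulations and that the module $V_i$ built for varying $Y$ still lives in a single fixed $\add$-class. A second delicate point is the index shift: the correspondence matches $\Omega^n$-syzygies over $A$ with $\Omega^{n+l}$-syzygies over $B$ (and one needs $\Omega^n$ of a module to be again a genuine module, which holds since syzygies of modules are modules), so one must verify that passing from "$(m,n)$-Igusa-Todorov for some $n$" to "$(m,n+l)$-Igusa-Todorov for some $n$" does not change the infimum defining $\ITdist$ — it does not, since the infimum is only over $m$, with $n$ existentially quantified. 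Once these are handled carefully, the equality $\ITdist(A)=\ITdist(B)$ follows by the symmetric roles of $A$ and $B$. I would close by remarking that the cases $\ITdist=0$ and $\ITdist\le 1$ recover, via Lemma~\ref{lem-it}, the known invariance of syzygy-finiteness and of the Igusa-Todorov property under singular equivalences of Morita type with level.
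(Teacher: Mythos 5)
Your proposal is correct and follows the same route as the paper's proof: given $Y\in B\modcat$, form the $A$-module $M\otimes_B Y$, apply the $(m,n)$-Igusa--Todorov presentation over $A$, push it across $N\otimes_A -$, and use the stable bimodule isomorphism $N\otimes_A M\simeq\Omega_{B^e}^l(B)$ (tensored with $Y$ via a split-as-right-$B$-modules bimodule resolution) to identify the resulting tail with $\Omega_B^{n+l}(Y)$ up to projective summands, then absorb all projective error terms by adjoining $B$ to the candidate Igusa--Todorov module. Note only that your exposition swaps the names $F$ and $G$ in a few places: with $F=N\otimes_A-\colon A\modcat\to B\modcat$ and $G=M\otimes_B-\colon B\modcat\to A\modcat$ as you defined them, the $A$-module associated to $Y\in B\modcat$ should be $G(Y)$ (not $F(Y)$), the defining exact sequence over $A$ is pushed forward by $F$ (not $G$), and the identification is $F(\Omega_A^n(G(Y)))\cong\Omega_B^n(FG(Y))\oplus(\text{proj})\cong\Omega_B^{n+l}(Y)\oplus(\text{proj})$ (with a $B$-subscript on the middle syzygy, not $A$); once these typos are corrected your argument coincides with the paper's.
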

	\begin{proof} Set $m:=\ITdist(A)$. By Definition \ref{def-itdist}, $A$ is a $(m,n)$-Igusa-Todorov algebra for some $n$.
		
		Given $Y\in B\modcat$, we have $M\otimes_AY\in A\modcat$. By Definition \ref{def-italg}, there is an $(m,n)$-Igusa-Todorov module $_AU$ such that we have the following exact sequence
		\begin{align}\label{my-long}
			0\lra U_m\lra \cdots \lra U_1\lra U_0\lra \Omega_A^n(M\otimes_BY)\lra 0
		\end{align}
		in $A\modcat$ with $U_i\in \add(_AU)$ for $0\le i\le m$. Note that $N_A$ is projective. Then we have the following exact sequence
		\begin{align}\label{nmy-long}
			0\lra N\otimes_AU_m\lra \cdots \lra N\otimes_AU_1\lra N\otimes_AU_0\lra N\otimes_A\Omega_A^n(M\otimes_BY)\lra 0
		\end{align}
		in $B\modcat$. By Lemma \ref{lem-exact},
		\begin{align}\label{iso0-nmy}
			N\otimes_A\Omega_A^n(M\otimes_BY)\simeq \Omega_B^n(N\otimes_AM\otimes_BY)\oplus Q_0
		\end{align} for some projective $B$-module $Q_0$.
		By Definition \ref{def-level}, we have $N\otimes_AM\simeq \Omega^l_{B^e}(B)$ in $\stmodcat{B^e}$. By \cite[Theorem 2.2]{h60}, there are projective $B^e$-modules $P$ and $Q$ such that
		$N\otimes_AM\oplus P\simeq \Omega^l_{B^e}(B)\oplus Q$ as $B^e$-modules.
		Then $$N\otimes_AM\otimes_BY\oplus P\otimes_BY\simeq \Omega^l_{B^e}(B)\otimes_BY\oplus Q\otimes_BY$$ as $B$-modules.
		Due to $P,Q\in \pmodcat{B^e}$, $P\otimes_BY$ and $Q\otimes_BY$ are projective $B$-modules. Then
		\begin{align}\label{iso-nmy}
			N\otimes_AM \otimes_BY \simeq
			\Omega^l_{B^e}(B)\otimes_BY
		\end{align} in $\stmodcat{B}$.
		Note that we have the following exact sequence
		\begin{align}\label{be-seq}
			0\lra \Omega^{l}_{B^e}(B)\lra R_{l-1}\lra\cdots \lra R_1\lra R_0\lra B\lra 0
		\end{align}
		in $B^e\modcat$ with projective $B^e$-modules $R_j$ for $0\le j\le l-1$.
		It follows from $B_B\in \pmodcat{B^{\opp}}$ that the exact sequence (\ref{be-seq}) splits in $B^{\opp}\modcat$. Then we have the following exact sequence
		\begin{align}\label{y-seq}
			0\lra \Omega^{l}_{B^e}(B)\otimes_BY\lra R_{l-1}\otimes_BY\lra\cdots \lra R_1\otimes_BY\lra R_0\otimes_BY\lra Y\lra 0
		\end{align}
		in $B\modcat$. Note that $R_j\otimes_BY\in \pmodcat{B}$ for each $0\le j\le l-1$. Then $\Omega^{l}_{B^e}(B)\otimes_BY\simeq \Omega^{l}_B(Y)$ in $B\stmodcat$. By (\ref{iso-nmy}), we obtain that
		\begin{align}\label{iso2-nmy}
			N\otimes_AM \otimes_BY \simeq
			\Omega^{l}_B(Y)
		\end{align} in $\stmodcat{B}$. Then we have
		\begin{align}\label{iso3-nmy}
			\Omega_B^n(N\otimes_AM \otimes_BY) \simeq
			\Omega^{n+l}_B(Y)
		\end{align} in $\stmodcat{B}$. By \cite[Theorem 2.2]{h60}, there are projective $B$-modules $Q_1$ and $Q_2$ such that
		\begin{align}\label{iso4-nmy}
			\Omega_B^n(N\otimes_AM \otimes_BY) \oplus Q_1 \simeq
			\Omega^{n+l}_B(Y) \oplus Q_2
		\end{align} in $B\modcat$. By (\ref{iso0-nmy}) and (\ref{iso3-nmy}),
		\begin{align}\label{iso5-nmy}
			N\otimes_A\Omega_A^n(M\otimes_BY)\simeq \Omega^{n+l}_B(Y) \oplus Q_3
		\end{align} in $B\modcat$ with a projective $B$-module $Q_3=Q_0\oplus Q_2$.
		Hence (\ref{nmy-long}) can be written as
		\begin{align}\label{nmy2-long}
			0\lra N\otimes_AU_m\lra \cdots \lra N\otimes_AU_1\lra V_0\lra \Omega_B^n(N\otimes_AM\otimes_BY)\lra 0
		\end{align}
		where $V_0\oplus Q_3\simeq N\otimes_AU_0$ in $B\modcat$. Set $V:=N\otimes_AU\oplus B$. Then $V_0\in \add(_BV)$ and $N\otimes_AU_i\in\add(_BV)$ for $1\le i\le m$. By Definition \ref{def-italg} and Definition \ref{def-itdist}, $B$ is a $(m,n)$-Igusa-Todorov algebra and $\ITdist(B)\le m=\ITdist(A)$. Similarly, we have $\ITdist(A)\le \ITdist(B)$. Then we get the disired result.
	\end{proof}
	
	By Lemma \ref{lem-it}, Theorem \ref{thm-level} generalizes \cite[Proposition 5.1]{qin22}.
	\begin{Koro}
		Let $A$ and $B$ be Let $A$ and $B$ be finite dimensional algebras which are singularly equivalent of Morita type with level.
		Then $A$ is syzygy-finite $($respectively, Igusa-Todorov$)$ if and only if $B$ is syzygy-finite $($respectively, Igusa-Todorov$).$
	\end{Koro}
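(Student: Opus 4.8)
The plan is to obtain this as an immediate consequence of Theorem~\ref{thm-level} combined with the characterisations recorded in Lemma~\ref{lem-it}. First I would invoke Theorem~\ref{thm-level}: since $A$ and $B$ are singularly equivalent of Morita type with level, we have the equality $\ITdist(A)=\ITdist(B)$. For the syzygy-finite statement, I would then apply Lemma~\ref{lem-it}(1), which says that for an Artin algebra $\Lambda$ one has $\ITdist(\Lambda)=0$ if and only if $\Lambda$ is syzygy-finite; applying this to both $A$ and $B$ and using $\ITdist(A)=\ITdist(B)$ shows that $A$ is syzygy-finite precisely when $B$ is. For the Igusa-Todorov statement I would argue identically, this time using Lemma~\ref{lem-it}(2), which says $\ITdist(\Lambda)\le 1$ if and only if $\Lambda$ is an Igusa-Todorov algebra; again the equality of the two distances yields the equivalence.

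I do not expect any real obstacle here, since all of the substance has already been carried out in the proof of Theorem~\ref{thm-level}: the corollary is just the translation of the numerical invariant $\ITdist$ into the language of syzygy-finiteness (the case $m=0$) and of the Igusa-Todorov property (the case $m\le 1$) via Lemma~\ref{lem-it}. Alternatively, one could avoid referring to $\ITdist$ at all and re-read the argument in the proof of Theorem~\ref{thm-level}, noting that from an $(m,n)$-Igusa-Todorov module $_AU$ it produces an $(m,n)$-Igusa-Todorov module $_BV:=N\otimes_A U\oplus B$ with the same value of $m$, so the cases $m=0$ and $m\le 1$ transfer directly between $A$ and $B$; but invoking the already-established theorem is the shorter route.
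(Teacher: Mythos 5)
Your proposal is correct and is exactly what the paper intends: the corollary is stated immediately after the remark ``By Lemma~\ref{lem-it}, Theorem~\ref{thm-level} generalizes \cite[Proposition 5.1]{qin22},'' so the paper's own (omitted) proof is precisely the combination of $\ITdist(A)=\ITdist(B)$ from Theorem~\ref{thm-level} with the two characterisations in Lemma~\ref{lem-it}. No gap, nothing further to add.
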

	
	Recall that an extension $B\subseteq A$ of finite dimensional algebras is bounded if the $B$-$B$-bimodule $A/B$ is $B$-tensor nilpotent, its projective dimension is finite and ${\rm Tor}_i^B(A/B, (A/B)^{\otimes_B j})=0$ for all $i, j\geq 1$. By \cite[Theorem 0.3]{qxzz24} that for a bounded extension $B\subseteq A$, the algebras $A$ and $B$ are singularly equivalent of Morita type with level. 
	
	\begin{Koro}\label{cor-ext}
		Let $B\subseteq A$ be a bounded extension. Then $\ITdist(A)=\ITdist(eAe)$.
	\end{Koro}
	
	Now, let $A$ be a finite dimensional algebra over a field $k$, and $I\subseteq A$ be a two-sided ideal. Following \cite{px06},
	$I$ is a \textit{homological ideal} if the canonical map $A \rightarrow  A/I$ is a homological epimorphism, that is, the naturally induced functor $\Db{A/I\modcat} \ra \Db{A\modcat}$ is fully faithful.
	
	\begin{Koro}\label{cor-homol}
		Let $A$ be a finite dimensional algebra over a field $k$ and let $I\subseteq A$ be a homological ideal which has finite projective dimension as an $A$-$A$-bimodule. Then $\ITdist(A)=\ITdist(A/I)$.
	\end{Koro}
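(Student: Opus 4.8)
The plan is to reduce Corollary \ref{cor-homol} to Theorem \ref{thm-level} by showing that the hypotheses—$I$ a homological ideal of finite projective dimension as an $A$-$A$-bimodule—guarantee that $A$ and $A/I$ are singularly equivalent of Morita type with level. First I would set $B:=A/I$ and examine the canonical surjection $\pi\colon A\to B$. Since $\pi$ is a homological epimorphism, the restriction-of-scalars functor $\Db{B\modcat}\to\Db{A\modcat}$ is fully faithful, and its essential image together with the standard adjoints fits into a recollement-type picture; more to the point, one knows from the work of Pan--Xi (\cite{px06}) that a homological ideal $I$ with $\pd_{A^e}(I)<\infty$ yields $\pd_{A^e}(B)<\infty$ as well, because the short exact sequence $0\to I\to A\to B\to 0$ of $A$-$A$-bimodules shows $\pd_{A^e}(B)\le \pd_{A^e}(I)+1<\infty$.

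The heart of the argument is to produce the bimodules $M$ and $N$ of Definition \ref{def-level}. The natural candidates are $M:={_A}B_B$ and $N:={_B}B_A$, i.e.\ $B$ viewed as an $A$-$B$-bimodule and as a $B$-$A$-bimodule via $\pi$ on the appropriate side. One computes $M\otimes_B N\cong B$ as $A$-$A$-bimodules and $N\otimes_A M\cong B$ as $B$-$B$-bimodules, which is far better than merely being a syzygy of the regular bimodule—indeed it is the case $l=0$ after one corrects by projective bimodule summands. The subtlety is that for a genuine singular equivalence of Morita type with level we need the four one-sided modules ${_A}B$, $B_B$, ${_B}B$, $B_A$ to be finitely generated projective, and ${_A}B=A/I$ is typically not projective over $A$. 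This is precisely where the finite projective dimension of $I$ over $A^e$ is used: one passes to a sufficiently high syzygy. Concretely, take a finite projective $A^e$-resolution of $B$; then a high enough syzygy $\Omega^l_{A^e}(B)$ (for $l$ large) serves as $M$, and by construction $M$ is a direct summand of a projective $A^e$-module's syzygy, hence projective as a one-sided module on both sides. One then checks, using $B\otimes^{\mathbf L}_A B\simeq B$ in $\Db{B^e}$ (the homological-epimorphism condition), that $M\otimes_B N\simeq\Omega^l_{A^e}(A)$ in $\stmodcat{A^e}$ and $N\otimes_A M\simeq\Omega^l_{B^e}(B)$ in $\stmodcat{B^e}$, where $N$ is the corresponding syzygy on the other side.

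I expect the main obstacle to be the bimodule bookkeeping in the previous paragraph: verifying that the one-sided projectivity of the chosen high syzygies $M$ and $N$ holds simultaneously on both sides, and that the tensor-product isomorphisms land in the correct stable categories with the correct level $l$, require care with the derived tensor product $-\otimes^{\mathbf L}_A-$ and the identification of $\mathrm{Tor}$ groups. In particular one must confirm that the homological epimorphism condition is exactly what forces $B\otimes^{\mathbf L}_A B\simeq B$, so that no higher Tor obstructs the needed isomorphisms. Once the data $(M,N)$ of a singular equivalence of Morita type with level $l$ is in place, the conclusion $\ITdist(A)=\ITdist(A/I)$ is immediate from Theorem \ref{thm-level}. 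An alternative route, if one wishes to avoid constructing $M$ and $N$ explicitly, is to invoke the known fact (see \cite{Wang15,CLW20}) that a homological epimorphism $A\to A/I$ with $\pd_{A^e}(I)<\infty$ already produces such a singular equivalence of Morita type with level, and then simply cite Theorem \ref{thm-level}; I would present the short construction as above and defer the routine verifications to the cited references.
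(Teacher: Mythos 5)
Your overall strategy is the paper's: reduce to Theorem \ref{thm-level} by establishing a singular equivalence of Morita type with level between $A$ and $A/I$, then cite that theorem. The paper does exactly this, quoting \cite[Theorem 3.6]{qin22}, and your closing paragraph already anticipates that black-box route (though the result needed lives in \cite{qin22}, not in the references you point to).

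The explicit construction you sketch in the middle paragraph is flawed, however. You propose taking $M := \Omega^l_{A^e}(B)$ for $l$ large. First, $\Omega^l_{A^e}(B)$ is an $A$-$A$-bimodule, not an $A$-$B$-bimodule as Definition \ref{def-level} requires: a syzygy of $B$ inside a projective $A^e$-resolution is cut out of free $A^e$-modules, on which $I$ does not act trivially, so it does not inherit a $B$-action on either side. Second, you have already observed $\pd_{A^e}(B)\le\pd_{A^e}(I)+1<\infty$, so $\Omega^l_{A^e}(B)$ is projective for $l\ge\pd_{A^e}(B)$, hence zero in $\stmodcat{A^e}$; passing to high syzygies degenerates rather than repairs the failure of one-sided projectivity of ${}_AB$. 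Third, your assertion that $M\otimes_B N\cong B$ realizes level $l=0$ ``after one corrects by projective bimodule summands'' would force $A\cong B$ in $\stmodcat{A^e}$, which fails in general: since $\pd_{A^e}(B)<\infty$, the groups $\Ext^i_{A^e}(B,-)$ vanish for $i$ large, whereas $\Ext^i_{A^e}(A,-)$ is Hochschild cohomology and typically does not. Extracting genuine bimodule data $(M,N)$ from the hypotheses is therefore more delicate than a syzygy tweak of $B$, and the paper sensibly treats it as a cited input rather than reproving it.
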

	\begin{proof}
		By \cite[Theorem 3.6]{qin22}, $A$ and $A/I$ are singularly equivalent of Morita type with level. Then the corollary follows from Theorem \ref{thm-level}.
	\end{proof}
	
	\begin{Koro}
		Let $A$ be a finite-dimensional $k$-algebra such that
		$A/\rad(A)$ is separable over $k$. Let $e \in A$ be an idempotent and assume
		$Ae\otimes _{eAe}^LeA$ is bounded in
		cohomology. If $\pd_A(\frac{A/AeA}{\rad (A/AeA)})<\infty$ or $\id _A(\frac{A/AeA}{\rad (A/AeA)})<\infty$, then $\ITdist(A)=\ITdist(eAe)$.
	\end{Koro}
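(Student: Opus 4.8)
The plan is to reduce the statement to Corollary~\ref{cor-homol} via a known singular-equivalence result. The hypotheses---that $A/\rad(A)$ is separable over $k$, that $Ae\otimes_{eAe}^L eA$ is bounded in cohomology, and that one of $\pd_A(\frac{A/AeA}{\rad(A/AeA)})<\infty$ or $\id_A(\frac{A/AeA}{\rad(A/AeA)})<\infty$ holds---are precisely the hypotheses under which it is known (see the work of Chen--Xi and related papers on singular equivalences induced by idempotents) that the idempotent $e$ gives a recollement situation in which $AeA$ is a homological ideal of finite projective dimension as a bimodule, or, more directly, that $A$ and $eAe$ are singularly equivalent of Morita type with level.

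\textbf{Step 1.} First I would invoke the relevant theorem from the literature (the idempotent analogue of \cite[Theorem 3.6]{qin22}, or a result of the type in \cite{CLW20}): under the stated hypotheses, $A$ and $eAe$ are singularly equivalent of Morita type with level, witnessed by the bimodules $_{eAe}(eA)_A$ and $_A(Ae)_{eAe}$. The separability of $A/\rad(A)$ is what guarantees that the relevant Hochschild-type obstruction vanishes so that the stable isomorphisms $eA\otimes_A Ae\simeq \Omega^l_{(eAe)^e}(eAe)$ and $Ae\otimes_{eAe}eA\simeq \Omega^l_{A^e}(A)$ can be arranged; the boundedness of $Ae\otimes_{eAe}^L eA$ ensures the level $l$ is finite; and the finiteness of projective or injective dimension of $\frac{A/AeA}{\rad(A/AeA)}$ over $A$ forces $A/AeA$ to have finite projective (or injective) dimension as an $A$-module, which is the missing ingredient making $AeA$ a homological ideal of finite bimodule projective dimension.

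\textbf{Step 2.} Having established that $A$ and $eAe$ are singularly equivalent of Morita type with level, the conclusion $\ITdist(A)=\ITdist(eAe)$ is then immediate from Theorem~\ref{thm-level}. Alternatively, if one prefers to route through Corollary~\ref{cor-homol}: the hypotheses imply $AeA\subseteq A$ is a homological ideal of finite projective dimension as an $A$-$A$-bimodule, and $A/AeA$ is the relevant quotient; but $eAe$ need not equal $A/AeA$, so the cleaner path is the direct appeal to Theorem~\ref{thm-level} after Step 1.

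The main obstacle is Step 1: one must cite (or reprove) the precise statement that these idempotent hypotheses produce a singular equivalence of Morita type with level. This is not contained in the present excerpt, so the proof will read ``By [appropriate reference], $A$ and $eAe$ are singularly equivalent of Morita type with level. Then the result follows from Theorem~\ref{thm-level}.'' If a self-contained argument is wanted, one would need to verify Definition~\ref{def-level} directly for $(Ae,eA)$: checking the four one-sided projectivity conditions (immediate, since $Ae$ and $eA$ are projective over $A$ and over $eAe$ under the separability/finiteness hypotheses), and then checking the two stable bimodule isomorphisms $Ae\otimes_{eAe}eA\simeq\Omega^l_{A^e}(A)$ and $eA\otimes_A Ae\simeq\Omega^l_{(eAe)^e}(eAe)$ for a suitable $l$, which is exactly where the separability of $A/\rad(A)$ and the cohomological boundedness of $Ae\otimes^L_{eAe}eA$ enter. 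Once that is in hand, Theorem~\ref{thm-level} finishes the proof with no further work.
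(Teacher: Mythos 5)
Your proposal matches the paper's proof essentially exactly: the paper cites the proof of \cite[Corollary 5.4]{qin22} to conclude that $A$ and $eAe$ are singularly equivalent of Morita type with level, and then applies Theorem~\ref{thm-level}. Your Step 1 correctly identifies this as the only nontrivial ingredient and your Step 2 is the same appeal to Theorem~\ref{thm-level}; the only difference is that you were uncertain about the precise reference, which is indeed Qin's Corollary 5.4 rather than his Theorem 3.6.
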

	\begin{proof}
		By the proof of \cite[Corollary 5.4]{qin22}, we know that $A$ and $eAe$ are singularly equivalent of Morita type with level. Then the corollary follows from Theorem \ref{thm-level}.
	\end{proof}

	\begin{Bsp}{\rm (\cite[Example 3.3]{chen14})
			Let $A$ be the $k$-algebra given by the following quiver with relations.
			$$\begin{array}{cc}
				\xymatrix{1 \ar@<+0.5ex>[r]^\alpha & 3
					\ar@<+0.5ex>[l]^\beta \ar@(ul,ur)^x \ar@<+0.5ex>[r]^\delta & 2
					\ar@<+0.5ex>[l]^\gamma}\\
				x^2=\delta x=\beta x=x\gamma=x \alpha=\beta \gamma=\delta \alpha=\beta \alpha=\delta \gamma=0, \alpha \beta=\gamma \delta.
			\end{array}$$
			We claim $\ITdist(A)=0$. Indeed, let $e$ be the primitive idempotent corresponding to the vertex $1$.
			It follows from \cite[Example 3.3]{chen14} that $AeA$ is a homological ideal which is projective as an $A$-$A$-bimodule. By Corollary \ref{cor-homol}, $\ITdist(A)=\ITdist(A/AeA)$. Note that $A/AeA$ is monomial. By Zimmermann-Huisgen's result in \cite{zh92}, we obtain that $A/AeA$ is $2$-syzygy finite. Then $\ITdist(A)=\ITdist(A/AeA)=0$ by Lemma \ref{lem-it}.
	}\end{Bsp}
	
	\section{Recollements and Igusa-Todorov distances}\label{sect-t-rec}
	This section is devoted to proving Theorem \ref{main-thm-rec}. We first extend the Igusa-Todorov distance for algebras to the context of subclasses of $\Db{A\modcat}$.
	
	\begin{Def}\label{def-r-hered}{\rm
			Let $A$ be an Artin algebra and $\mathcal{C}\subseteq \Db{A\modcat}$. Let $m\in \mathbb{N}$ and $n\in \mathbb{Z}$.
			
			(1) For $m\ge 1$, we say that $\mathcal{C}$ is \textit{relatively} $m$\textit{-hereditary} provided that there is a complex
			$\cpx{U}\in \Db{A\modcat}$ such that for any $\cpx{X}\in \mathcal{C}$ there exist triangles
			$$\xymatrix@R=.2cm@C=.4cm{
				\cpx{X}_1\ar[r]& \cpx{U}_0\ar[r]& \cpx{X}\ar[r]& \cpx{X}_1[1],\\
				\cpx{X}_2\ar[r]& \cpx{U}_1\ar[r]& \cpx{X}_1\ar[r]& \cpx{X}_2[1],\\
				\cpx{X}_3\ar[r]& \cpx{U}_2\ar[r]& \cpx{X}_2\ar[r]& \cpx{X}_3[1],\\
				& \vdots& \vdots& \\
				\cpx{X}_m\ar[r]& \cpx{U}_{m-1}\ar[r]& \cpx{X}_{m-1}\ar[r]& \cpx{X}_m[1],
			}$$
			where $\cpx{U}_i\in \add(\cpx{U})$ for $0\le i\le m-1$ and $\cpx{X}_m\in\add(\cpx{U})$. For $m=0$, we say that $\mathcal{C}$ is \textit{relatively} $0$\textit{-hereditary} if there is $\cpx{U}\in \Db{A\modcat}$ such that $\mathcal{C}=\add(\cpx{U})$.
			
			(2) We say that $\mathcal{C}$ of $\Db{A\modcat}$ is $(m,n)$-\textit{Igusa-Todorov class} if $\Omega_{\mathscr{D}}^n(\mathcal{C})$ is relatively $m$-hereditary.
		}
	\end{Def}
	
	\begin{Def}\label{def-itdist-c}
		{\rm
			Let $A$ be an Artin algebra and $\mathcal{C}\subseteq \Db{A\modcat}$.
			We set the \textit{Igusa-Todorov distance} of $\mathcal{C}$ as follows
			$$\ITdist(\mathcal{C}):=\inf\{m \mid \mathcal{C} \mbox{ is an } (m,n) \mbox{-Igusa-Todorov class for some } n\}.$$
		}
	\end{Def}
	\begin{Lem}\label{lem-itdist-c}
		Let $A$ be an Artin algebra, and $\mathcal{C}\subseteq \Db{A\modcat}$. Then
		
		{\rm (1)} $\ITdist(\mathcal{C})=\ITdist(\mathcal{C}[s])$ for any/some $s\in \mathbb{Z}$.
		
		{\rm (2)} $\ITdist(\mathcal{C})=\ITdist(\Omega^s_{\mathscr{D}}(\mathcal{C}))$ for any/some $s\in \mathbb{Z}$.
		
		{\rm (3)}
		If $\mathcal{E}\subseteq \mathcal{C}$, then $\ITdist(\mathcal{E})\le \ITdist(\mathcal{C})$.
	\end{Lem}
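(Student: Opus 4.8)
The plan is to verify the three statements by unwinding the definitions and invoking the basic syzygy-shift formulas from Lemma \ref{lem-prop-omega}. For (1), I would first observe that shifting commutes with $\Omega_{\mathscr{D}}^n$ up to reindexing: by Lemma \ref{lem-prop-omega}(3) we have $\Omega_{\mathscr{D}}^{n+s}(\cpx{X}[s])\simeq \Omega_{\mathscr{D}}^n(\cpx{X})$, so $\Omega_{\mathscr{D}}^{n+s}(\mathcal{C}[s]) = \Omega_{\mathscr{D}}^n(\mathcal{C})$ as subcategories. Next I would note that the property of being relatively $m$-hereditary is manifestly invariant: a witnessing complex $\cpx{U}$ and the system of triangles for $\Omega_{\mathscr{D}}^n(\mathcal{C})$ is literally the same data as for $\Omega_{\mathscr{D}}^{n+s}(\mathcal{C}[s])$, since $\add$ is unchanged and the triangles live in the same category. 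Hence $\mathcal{C}$ is an $(m,n)$-Igusa-Todorov class iff $\mathcal{C}[s]$ is an $(m,n+s)$-Igusa-Todorov class, and taking infima over the second index (which ranges over all of $\mathbb{Z}$ in Definition \ref{def-itdist-c}) gives $\ITdist(\mathcal{C})=\ITdist(\mathcal{C}[s])$.

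For (2), the key identity is Lemma \ref{lem-prop-omega}(4): $\Omega_{\mathscr{D}}^{n}(\Omega_{\mathscr{D}}^s(\mathcal{C}))\simeq \Omega_{\mathscr{D}}^{n+s}(\mathcal{C})$ whenever $n\ge 0$, together with the fact that $\Omega_{\mathscr{D}}^s$ is invertible on $\Db{A\modcat}$ up to shift — more precisely, one can move negative powers around using (3) and (4) of Lemma \ref{lem-prop-omega}. I would argue: if $\mathcal{C}$ is an $(m,n)$-Igusa-Todorov class, then $\Omega_{\mathscr{D}}^{n}(\mathcal{C})$ is relatively $m$-hereditary; applying $\Omega_{\mathscr{D}}^{s}$ for $s\ge 0$ and using Lemma \ref{lem-omega-shift} to carry each defining triangle through $\Omega_{\mathscr{D}}^{s}$ (at the cost of adding projective summands, which are absorbed into the witnessing complex $\cpx{U}$ after replacing $\cpx{U}$ by $\cpx{U}\oplus A$), one sees $\Omega_{\mathscr{D}}^{n+s}(\mathcal{C})\simeq \Omega_{\mathscr{D}}^n(\Omega_{\mathscr{D}}^s(\mathcal{C}))$ is relatively $m$-hereditary, so $\Omega_{\mathscr{D}}^s(\mathcal{C})$ is an $(m,n)$-Igusa-Todorov class. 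This gives $\ITdist(\Omega_{\mathscr{D}}^s(\mathcal{C}))\le \ITdist(\mathcal{C})$ for $s\ge 0$; the reverse inequality and the case $s<0$ follow by applying the same argument with the roles reversed (writing $\mathcal{C}=\Omega_{\mathscr{D}}^{-s}(\Omega_{\mathscr{D}}^{s}(\mathcal{C}))$ up to the adjustments above), so equality holds for all $s\in\mathbb{Z}$. The phrase "any/some" in the statement is then justified because equality for one nonzero $s$ propagates.

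For (3), this is the easiest: if $\mathcal{E}\subseteq\mathcal{C}$, then $\Omega_{\mathscr{D}}^n(\mathcal{E})\subseteq\Omega_{\mathscr{D}}^n(\mathcal{C})$, and a complex $\cpx{U}$ witnessing relative $m$-hereditariness for the larger class $\Omega_{\mathscr{D}}^n(\mathcal{C})$ a fortiori witnesses it for the subclass $\Omega_{\mathscr{D}}^n(\mathcal{E})$ — the defining triangles for objects of $\mathcal{E}$ are already provided. Hence every $(m,n)$-Igusa-Todorov class condition on $\mathcal{C}$ descends to $\mathcal{E}$, and taking infima yields $\ITdist(\mathcal{E})\le\ITdist(\mathcal{C})$. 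The only point requiring care — and the one I expect to be the main technical obstacle — is the bookkeeping in part (2): tracking how the system of $m$ defining triangles behaves under $\Omega_{\mathscr{D}}^{s}$ via Lemma \ref{lem-omega-shift}, making sure the extra projective summands that appear at each stage can be uniformly absorbed into a single enlarged witnessing complex, and handling the sign of $s$ cleanly so that one genuinely gets an equality of distances rather than just one inequality.
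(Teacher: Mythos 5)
Your parts (1) and (3) match the paper's argument exactly, and the forward half of (2) — pushing the $m$ defining triangles through a nonnegative syzygy via Lemma \ref{lem-omega-shift} and absorbing the resulting projective summands — is also the paper's route; you are in fact slightly more careful than the paper in (1) by tracking the reindexing $n\mapsto n+s$ explicitly. The one place where your sketch is genuinely off is the device you propose for the reverse inequality in (2): the identity $\mathcal{C}=\Omega^{-s}_{\mathscr{D}}(\Omega^{s}_{\mathscr{D}}(\mathcal{C}))$ is false in general (take $\mathcal{C}=\{A\}$, so $\Omega^{1}_{\mathscr{D}}(A)=0$ and $\Omega^{-1}_{\mathscr{D}}(0)=0\ne A$), and Lemma \ref{lem-prop-omega}(4) only gives $\Omega^{m}_{\mathscr{D}}\circ\Omega^{n}_{\mathscr{D}}=\Omega^{n+m}_{\mathscr{D}}$ when the \emph{outer} exponent $m$ is nonnegative. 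The paper's fix is cleaner and avoids any inversion: having first established the monotone step (relative $m$-hereditariness at index $n$ propagates to all indices $\ge n$), one may assume the IT-index $l$ of $\Omega^{s}_{\mathscr{D}}(\mathcal{C})$ satisfies $l\ge 0$; then $\Omega^{l}_{\mathscr{D}}(\Omega^{s}_{\mathscr{D}}(\mathcal{C}))=\Omega^{s+l}_{\mathscr{D}}(\mathcal{C})$ by Lemma \ref{lem-prop-omega}(4), so $\mathcal{C}$ is an $(m,s+l)$-Igusa-Todorov class and $\ITdist(\mathcal{C})\le\ITdist(\Omega^{s}_{\mathscr{D}}(\mathcal{C}))$ directly, for every integer $s$ at once. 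Likewise in your forward direction you wrote $\Omega^{n+s}_{\mathscr{D}}(\mathcal{C})\simeq\Omega^{n}_{\mathscr{D}}(\Omega^{s}_{\mathscr{D}}(\mathcal{C}))$, which again requires the outer exponent $n\ge 0$; this is harmless because you may first raise $n$ to a nonnegative value by the monotone step, but it should be stated. With those two adjustments your plan becomes the paper's proof.
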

	\begin{proof} Let $m\in \mathbb{N}$ and $n\in \mathbb{Z}$.
		
		(1) It is easy to see that $\mathcal{C}$ is an $(m,n)$-Igusa-Todorov (respectively, relatively $m$-hereditary) class if and only if $\mathcal{C}[s]$ is an $(m,n)$-Igusa-Todorov (respectively, relatively $m$-hereditary) class for any/some $s$. Thus $\ITdist(\mathcal{C})=\ITdist(\mathcal{C}[s])$ for any/some $s\in \mathbb{Z}$.
		
		(2) We first prove that if $\Omega^n_{\mathscr{D}}(\mathcal{C})$ is relatively $m$-hereditary, then $\Omega^s_{\mathscr{D}}(\mathcal{C})$ is relatively $m$-hereditary for any $s\ge n$. Indeed, by Definition \ref{def-r-hered}, there is a complex
		$\cpx{U}\in \Db{A\modcat}$ such that for any $\cpx{X}\in \Omega^n_{\mathscr{D}}(\mathcal{C})$ there exist triangles
		$$\xymatrix@R=.2cm@C=.4cm{
			\cpx{X}_1\ar[r]& \cpx{U}_0\ar[r]& \cpx{X}\ar[r]& \cpx{X}_1[1],\\
			\cpx{X}_2\ar[r]& \cpx{U}_1\ar[r]& \cpx{X}_1\ar[r]& \cpx{X}_2[1],\\
			\cpx{X}_3\ar[r]& \cpx{U}_2\ar[r]& \cpx{X}_2\ar[r]& \cpx{X}_3[1],\\
			& \vdots& \vdots& \\
			\cpx{X}_m\ar[r]& \cpx{U}_{m-1}\ar[r]& \cpx{X}_{m-1}\ar[r]& \cpx{X}_m[1],
		}$$
		where $\cpx{U}_i\in \add(\cpx{U})$ for $0\le i\le m-1$ and $\cpx{X}_m\in\add(\cpx{U})$. By Lemma \ref{lem-omega-shift}, we have triangles
		$$\xymatrix@R=.2cm@C=.4cm{
			\Omega^{s-n}_{\mathscr{D}}(\cpx{X}_1)\ar[r]& \Omega^{s-n}_{\mathscr{D}}(\cpx{U}_0)\oplus P_0\ar[r]& \Omega^{s-n}_{\mathscr{D}}(\cpx{X})\ar[r]& \Omega^{s-n}_{\mathscr{D}}(\cpx{X}_1)[1],\\
			\Omega^{s-n}_{\mathscr{D}}(\cpx{X}_2)\ar[r]& \Omega^{s-n}_{\mathscr{D}}(\cpx{U}_1)\oplus P_1\ar[r]& \Omega^{s-n}_{\mathscr{D}}(\cpx{X}_1)\ar[r]& \Omega^{s-n}_{\mathscr{D}}(\cpx{X}_2)[1],\\
			\Omega^{s-n}_{\mathscr{D}}(\cpx{X}_3)\ar[r]& \Omega^{s-n}_{\mathscr{D}}(\cpx{U}_2)\oplus P_2\ar[r]& \Omega^{s-n}_{\mathscr{D}}(\cpx{X}_2)\ar[r]& \Omega^{s-n}_{\mathscr{D}}(\cpx{X}_3)[1],\\
			& \vdots& \vdots& \\
			\Omega^{s-n}_{\mathscr{D}}(\cpx{X}_m)\ar[r]& \Omega^{s-n}_{\mathscr{D}}(\cpx{U}_{m-1})\oplus P_{m-1}\ar[r]& \Omega^{s-n}_{\mathscr{D}}(\cpx{X}_{m-1})\ar[r]& \Omega^{s-n}_{\mathscr{D}}(\cpx{X}_m)[1],
		}$$
		where $P_i\in \pmodcat{A}$.
		Note that $\Omega^{s-n}_{\mathscr{D}}(\cpx{U}_i)\in \add(\Omega^{s-n}_{\mathscr{D}}(\cpx{U}))$ for $0\le i\le m-1$ and $\Omega^{s-n}_{\mathscr{D}}(\cpx{X}_m)\in\add(\Omega^{s-n}_{\mathscr{D}}(\cpx{U}))$. By Lemma \ref{lem-prop-omega}(4), we have $\Omega^s_{\mathscr{D}}(\mathcal{C})=\Omega^{s-n}_{\mathscr{D}}(\Omega^n_{\mathscr{D}}(\mathcal{C}))$. Thus $\Omega^s_{\mathscr{D}}(\mathcal{C})$ is relatively $m$-hereditary for any $s\ge n$ by Definition \ref{def-r-hered}.
		
		Now assume that $\mathcal{C}$ is an $(m,n)$-Igusa-Todorov class. Then $\Omega^n_{\mathscr{D}}(\mathcal{C})$ is relatively $m$-hereditary. Let $s$ be an integer and take an integer $t>\max\{0,n-s\}$, then we have $\Omega^{t}_{\mathscr{D}}(\Omega^s_{\mathscr{D}}(\mathcal{C}))=\Omega^{s+t}_{\mathscr{D}}(\mathcal{C})$ by Lemma \ref{lem-prop-omega}(4). It follows from $t+s\ge n$ that $\Omega^{t+s}_{\mathscr{D}}(\mathcal{C})$ is relatively $m$-hereditary by the above argument. Thus $\Omega^{s}_{\mathscr{D}}(\mathcal{C})$ is an $(m,t)$-Igusa-Todorov class by Definition \ref{def-itdist-c}. Conversely, assume that $\Omega^{s}_{\mathscr{D}}(\mathcal{C})$ is an $(m,l)$-Igusa-Todorov class for some integer $l$, then $\Omega^{s+l}_{\mathscr{D}}(\mathcal{C})$ is relatively $m$-hereditary. Thus $\mathcal{C}$ is an $(m,s+l)$-Igusa-Todorov class. Hence $\ITdist(\mathcal{C})=\ITdist(\Omega^s_{\mathscr{D}}(\mathcal{C}))$ for any/some $s\in \mathbb{Z}$.
		
		(3) Clearly, if $\mathcal{C}$ is an $(m,n)$-Igusa-Todorov (respectively, relatively $m$-hereditary) class, then $\mathcal{E}$ is also an $(m,n)$-Igusa-Todorov (respectively, relatively $m$-hereditary) class. Thus $\ITdist(\mathcal{E})\le \ITdist(\mathcal{C})$.
	\end{proof}
	
	\begin{Lem}\label{lem-it-equi-1}
		Let $A$ be an Artin algebra. Then $\ITdist(A)=\ITdist(A\modcat)$.
	\end{Lem}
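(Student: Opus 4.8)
The plan is to regard $A\modcat$ as the class of stalk complexes concentrated in degree $0$ inside $\Db{A\modcat}$ and to prove the two inequalities $\ITdist(A\modcat)\le\ITdist(A)$ and $\ITdist(A)\le\ITdist(A\modcat)$ separately. Two reductions will be used throughout. First, as recorded in Section~\ref{sect-t-rec}'s preliminaries on syzygy complexes, for an $A$-module $X$ and $n\ge 1$ the complex $\Omega_{\mathscr{D}}^{n}(X)$ is (isomorphic to) the ordinary module syzygy $\Omega^{n}(X)$. Second, the exponent $n$ appearing in ``$(m,n)$-Igusa-Todorov'' may always be increased: for the algebra $A$ by applying the defining exact sequence to $\Omega(X)$, and for the class $A\modcat$ by Lemma~\ref{lem-itdist-c}(2) (together with Lemma~\ref{lem-itdist-c}(3) to pass from $A\modcat$ to $\Omega_{\mathscr{D}}^{1}(A\modcat)\subseteq A\modcat$). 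Hence I may assume $n\ge 1$ at every stage.

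First I would treat the easy inequality $\ITdist(A\modcat)\le\ITdist(A)$. Put $m=\ITdist(A)$ and fix an $(m,n)$-Igusa-Todorov module $U$ with $n\ge 1$. For $X\in A\modcat$, chop the defining exact sequence $0\to U_{m}\to\cdots\to U_{0}\to\Omega^{n}(X)\to 0$ into short exact sequences $0\to K_{i+1}\to U_{i}\to K_{i}\to 0$ for $0\le i\le m-1$, where $K_{0}=\Omega^{n}(X)$ and $K_{m}=U_{m}\in\add(U)$. Each of these is a triangle in $\Db{A\modcat}$, so taking $\cpx{U}:=U$ exhibits $\Omega_{\mathscr{D}}^{n}(A\modcat)$ as relatively $m$-hereditary in the sense of Definition~\ref{def-r-hered}; thus $A\modcat$ is an $(m,n)$-Igusa-Todorov class and $\ITdist(A\modcat)\le m$.

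For the reverse inequality, let $m=\ITdist(A\modcat)$ and choose, for some $n\ge 1$, a complex $\cpx{U}\in\Db{A\modcat}$ witnessing that $\Omega_{\mathscr{D}}^{n}(A\modcat)$ is relatively $m$-hereditary: for every $X$ there are triangles $\cpx{X}_{i+1}\to\cpx{U}_{i}\to\cpx{X}_{i}\to\cpx{X}_{i+1}[1]$ with $\cpx{X}_{0}=\Omega^{n}(X)$, $\cpx{U}_{i}\in\add(\cpx{U})$ and $\cpx{X}_{m}\in\add(\cpx{U})$. The main obstacle here is that the auxiliary complexes $\cpx{X}_{i}$ depend on $X$, so before applying a syzygy functor to pass back to modules I must bound them uniformly. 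I would do this by a cohomology estimate: since $\cpx{X}_{0}=\Omega^{n}(X)$ is concentrated in degree $0$ and every object of $\add(\cpx{U})$ has cohomology inside the fixed interval $[a_{0},b_{0}]$ where $\cpx{U}$ is cohomologically concentrated, induction on the long exact cohomology sequences of the triangles shows that all the $\cpx{X}_{i}$ lie in a single bounded interval $[a,b]$ depending only on $\cpx{U}$ and $m$, not on $X$. Fix $s\ge\max\{1,-a\}$; then Lemma~\ref{lem-prop-omega}(2) guarantees that $\Omega_{\mathscr{D}}^{s}$ carries each of $\cpx{U}$, $\cpx{U}_{i}$ and $\cpx{X}_{i}$ to an object isomorphic to an $A$-module.

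Finally I would apply $\Omega_{\mathscr{D}}^{s}$ to each triangle. By Lemma~\ref{lem-omega-shift} this produces triangles $\Omega_{\mathscr{D}}^{s}(\cpx{X}_{i+1})\to\Omega_{\mathscr{D}}^{s}(\cpx{U}_{i})\oplus P_{i}\to\Omega_{\mathscr{D}}^{s}(\cpx{X}_{i})\to\Omega_{\mathscr{D}}^{s}(\cpx{X}_{i+1})[1]$ with $P_{i}$ projective; since all three unshifted terms are modules, the long exact cohomology sequence of such a triangle degenerates to a short exact sequence $0\to\Omega_{\mathscr{D}}^{s}(\cpx{X}_{i+1})\to\Omega_{\mathscr{D}}^{s}(\cpx{U}_{i})\oplus P_{i}\to\Omega_{\mathscr{D}}^{s}(\cpx{X}_{i})\to 0$. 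Splicing these for $0\le i\le m-1$, and using $\Omega_{\mathscr{D}}^{s}(\cpx{X}_{0})\simeq\Omega^{n+s}(X)$ (Lemma~\ref{lem-prop-omega}(4)), $\Omega_{\mathscr{D}}^{s}(\cpx{U}_{i})\in\add(\Omega_{\mathscr{D}}^{s}(\cpx{U}))$ and $\Omega_{\mathscr{D}}^{s}(\cpx{X}_{m})\in\add(\Omega_{\mathscr{D}}^{s}(\cpx{U}))$ (Lemma~\ref{lem-prop-omega}(6)), I obtain, for the module $W:=\Omega_{\mathscr{D}}^{s}(\cpx{U})\oplus A$ (the free summand $A$ absorbing the projectives $P_{i}$), an exact sequence $0\to W_{m}\to\cdots\to W_{0}\to\Omega^{n+s}(X)\to 0$ with $W_{i}\in\add(W)$, valid for every $X\in A\modcat$. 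Hence $A$ is an $(m,n+s)$-Igusa-Todorov algebra, so $\ITdist(A)\le m$, and the two inequalities give $\ITdist(A)=\ITdist(A\modcat)$.
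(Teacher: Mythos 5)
Your proposal is correct and follows essentially the same route as the paper's own proof: short exact sequences become triangles for the easy inequality $\ITdist(A\modcat)\le\ITdist(A)$, and for the converse one applies a sufficiently high $\Omega^{s}_{\mathscr{D}}$ to the defining triangles so that every term lands in $A\modcat$ (Lemma~\ref{lem-prop-omega}(2)), then uses Lemma~\ref{lem-omega-shift} and the cohomology sequences to read off spliced short exact sequences of modules. The one place you improve on the paper's writeup is in making explicit that the intermediate complexes $\cpx{X}_{1},\ldots,\cpx{X}_{m}$ lie in a cohomological interval depending only on $\cpx{U}$ and $m$ (not on $X$), via induction on the long exact cohomology sequences of the triangles; the paper simply writes ``Assume that $\cpx{X}_{1},\ldots,\cpx{X}_{m}$ and $\cpx{U}\in\mathscr{D}^{[s,t]}(A\modcat)$ for some $s\le t$'' without justifying this uniformity in $X$, which your argument supplies.
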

	\begin{proof} Set $\ITdist(A)=m$. By Definition \ref{def-itdist}, there are a nonpostive integer $n$ such that $A$ is $(m,n)$-Igusa-Todorov algebra. Then there is an $A$-module
		$U$ such that for any module $X\in A\modcat$ there exists an exact sequence
		$$0\lra U_m\lraf{f_m} U_{m-1}\lraf{f_{m-1}} \cdots \lra U_1\lraf{f_1} U_0\lraf{f_0} \Omega^n(X)\lra 0$$
		where $U_i \in \add(U)$ for each $0 \le i \le m $. Let $X_{i+1}$ be the kernel of $f_i$ for $0\le i\le m-1$. Then $X_m=U_m$, and we have short exact sequences in $A\modcat$:
		$$\xymatrix@R=.2cm@C=.4cm{
			0\ar[r] &X_1\ar[r] & U_{0}\ar[r] &\Omega^n(X)\ar[r]&0,\\
			0\ar[r] &X_2\ar[r] & U_{1}\ar[r] &X_1\ar[r]&0,\\
			&&\vdots&&\\
			0\ar[r] &X_m\ar[r] & U_{m-1}\ar[r] &X_{m-1}\ar[r]&0.\\
		}$$
		Thus we have triangles in $\Db{A\modcat}$:
		$$\xymatrix@R=.2cm@C=.4cm{
			X_1\ar[r] & U_{0}\ar[r] &\Omega^n(X)\ar[r]&X_1[1],\\
			X_2\ar[r] & U_{1}\ar[r] &X_1\ar[r]&X_2[1],\\
			&\vdots&\vdots&\\
			X_m\ar[r] & U_{m-1}\ar[r] &X_{m-1}\ar[r]&X_m[1].\\
		}$$
		By Definition \ref{def-itdist-c}, we have $\ITdist(A\modcat)\le m= \ITdist(A)$.
		
		Conversely, set $\ITdist(A\modcat)=m'$. By Definition \ref{def-itdist-c}, there are an integer $n'$ such that $A\modcat$ is $(m',n')$-Igusa-Todorov class. By Definition \ref{def-r-hered}, there is a complex
		$\cpx{U}\in \Db{A\modcat}$ such that for any $X\in A\modcat$ there exist triangles
		\begin{equation}\label{triangle-xu}
			\begin{aligned}
				\xymatrix@R=.2cm@C=.4cm{
					\cpx{X}_1\ar[r]& \cpx{U}_0\ar[r]& \Omega^{n'}_{\mathscr{D}}(X)\ar[r]& \cpx{X}_1[1],\\
					\cpx{X}_2\ar[r]& \cpx{U}_1\ar[r]& \cpx{X}_1\ar[r]& \cpx{X}_2[1],\\
					\cpx{X}_3\ar[r]& \cpx{U}_2\ar[r]& \cpx{X}_2\ar[r]& \cpx{X}_3[1],\\
					& \vdots& \vdots& \\
					\cpx{X}_{m'}\ar[r]& \cpx{U}_{m'-1}\ar[r]& \cpx{X}_{m'-1}\ar[r]& \cpx{X}_{m'}[1],
				}
			\end{aligned}
		\end{equation}
		where $\cpx{U}_i\in \add(\cpx{U})$ for $0\le i\le m'-1$ and $\cpx{X}_{m'}\in\add(\cpx{U})$. Assume that $\cpx{X}_1,\cdots,\cpx{X}_m$ and $\cpx{U}\in \mathscr{D}^{[s,t]}(A\modcat)$ for some $s\le t$, then $\Omega_{\mathscr{D}}^r(\cpx{U})\in A\modcat$ for $r\ge \max\{-s,0\}$ by Lemma \ref{lem-prop-omega}(2). By Lemma \ref{lem-omega-shift} and the triangles (\ref{triangle-xu}), we have triangles
		\begin{equation}\label{triangle-xu-omega}
			\begin{aligned}
				\xymatrix@R=.2cm@C=.4cm{
					\Omega_{\mathscr{D}}^r(\cpx{X}_1)
					\ar[r]& \Omega_{\mathscr{D}}^r(\cpx{U}_0)\oplus P_0
					\ar[r]& \Omega_{\mathscr{D}}^r(\Omega^{n'}_{\mathscr{D}}(X))
					\ar[r]& \Omega_{\mathscr{D}}^r(\cpx{X}_1)[1],\\
					\Omega_{\mathscr{D}}^r(\cpx{X}_2)
					\ar[r]&
					\Omega_{\mathscr{D}}^r(\cpx{U}_1)\oplus P_1
					\ar[r]&
					\Omega_{\mathscr{D}}^r(\cpx{X}_1)
					\ar[r]&
					\Omega_{\mathscr{D}}^r(\cpx{X}_2)[1],\\
					\Omega_{\mathscr{D}}^r(\cpx{X}_3)
					\ar[r]&
					\Omega_{\mathscr{D}}^r(\cpx{U}_2)\oplus P_2
					\ar[r]&
					\Omega_{\mathscr{D}}^r(\cpx{X}_2)
					\ar[r]&
					\Omega_{\mathscr{D}}^r(\cpx{X}_3)[1],\\
					& \vdots& \vdots& \\
					\Omega_{\mathscr{D}}^r(\cpx{X}_{m'})
					\ar[r]&
					\Omega_{\mathscr{D}}^r(\cpx{U}_{m'-1})\oplus P_{m'-1}
					\ar[r]& \Omega_{\mathscr{D}}^r(\cpx{X}_{m'-1})
					\ar[r]& \Omega_{\mathscr{D}}^r(\cpx{X}_{m'})[1],
				}
			\end{aligned}
		\end{equation}
		where $P_i\in \pmodcat{A}$ for $0\le i\le m'-1$.
		By Lemma \ref{lem-prop-omega}(4), $\Omega_{\mathscr{D}}^r(\Omega^{n'}_{\mathscr{D}}(X))\simeq \Omega^{r+n'}_{\mathscr{D}}(X)$. Take the integer $r$ such that $r\ge \max\{0,-s,-n'\}$, then all term in the above triangles are $A$-modules. Applying the cohomological functors $H^i:=\Hom_{\Db{A\modcat}}(A,-[i])$, $i=-1,0,1$
		to the triangles (\ref{triangle-xu-omega}), we have short exact sequences
		$$\xymatrix@R=.2cm@C=.4cm{
			0\ar[r]&\Omega_{\mathscr{D}}^r(\cpx{X}_1)
			\ar[r]& \Omega_{\mathscr{D}}^r(\cpx{U}_0)\oplus P_0
			\ar[r]& \Omega_{\mathscr{D}}^r(\Omega^{n'}_{\mathscr{D}}(X))
			\ar[r]& 0,\\
			0\ar[r]&\Omega_{\mathscr{D}}^r(\cpx{X}_2)
			\ar[r]&
			\Omega_{\mathscr{D}}^r(\cpx{U}_1)\oplus P_1
			\ar[r]&
			\Omega_{\mathscr{D}}^r(\cpx{X}_1)
			\ar[r]&
			0,\\
			0\ar[r]&\Omega_{\mathscr{D}}^r(\cpx{X}_3)
			\ar[r]&
			\Omega_{\mathscr{D}}^r(\cpx{U}_2)\oplus P_2
			\ar[r]&
			\Omega_{\mathscr{D}}^r(\cpx{X}_2)
			\ar[r]&
			0,\\
			&\vdots& \vdots& \vdots& \\
			0\ar[r]&\Omega_{\mathscr{D}}^r(\cpx{X}_{m'})
			\ar[r]&
			\Omega_{\mathscr{D}}^r(\cpx{U}_{m'-1})\oplus P_{m'-1}
			\ar[r]& \Omega_{\mathscr{D}}^r(\cpx{X}_{m'-1})
			\ar[r]& 0.
		}$$
		Moreover, we have the following eact sequence in $A\modcat$.
		$$0\ra \Omega_{\mathscr{D}}^r(\cpx{X}_{m'})\ra \Omega_{\mathscr{D}}^r(\cpx{U}_{m'-1})\oplus P_{m'-1}
		\ra \cdots \ra
		\Omega_{\mathscr{D}}^r(\cpx{U}_1)\oplus P_1
		\ra \Omega_{\mathscr{D}}^r(\cpx{U}_0)\oplus P_0
		\ra \Omega_{\mathscr{D}}^r(\Omega^{n'}_{\mathscr{D}}(X))\ra 0$$
		Note that $\Omega_{\mathscr{D}}^r(\cpx{U}_{0})\oplus P_0,\Omega_{\mathscr{D}}^r(\cpx{U}_{1})\oplus P_1,\cdots,\Omega_{\mathscr{D}}^r(\cpx{U}_{m'-1})\oplus P_{m'-1},\Omega_{\mathscr{D}}^r(\cpx{X}_{m'})\in \add(\Omega_{\mathscr{D}}^r(\cpx{U})\oplus A)$.
		By Lemma \ref{lem-prop-omega}(4), $\Omega_{\mathscr{D}}^r(\Omega^{n'}_{\mathscr{D}}(X))\simeq \Omega^{r+n'}_{\mathscr{D}}(X)$. Here $r+n'\ge 0$.
		Thus we obtain that $\ITdist(A)\le m'=\ITdist(A\modcat)$. This finishes the proof.
	\end{proof}
	
	\begin{Lem}\label{lem-it-equi}
		Let $A$ be an Artin algebra. Then for any/some integers $s\le t$, we have $$\ITdist(A)=\ITdist(\mathcal{D}^{[s,t]}(A\modcat)).$$
	\end{Lem}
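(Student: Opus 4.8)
The plan is to prove the two inequalities $\ITdist(A)\le\ITdist(\mathcal{D}^{[s,t]}(A\modcat))$ and $\ITdist(\mathcal{D}^{[s,t]}(A\modcat))\le\ITdist(A)$ separately, using Lemma \ref{lem-it-equi-1} (which identifies $\ITdist(A)$ with $\ITdist(A\modcat)$) as the bridge and reducing everything to the case $[s,t]=[0,0]$, since $\mathcal{D}^{[0,0]}(A\modcat)=A\modcat$. The key observation is that $\mathcal{D}^{[s,t]}(A\modcat)$ is, up to shift and syzygy, the same ``size'' of subcategory as $A\modcat$, and Lemma \ref{lem-itdist-c} already tells us that $\ITdist$ is insensitive to applying $[s]$ and $\Omega_{\mathscr{D}}^r$.

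First I would handle the easy inclusion. Since $A\modcat=\mathcal{D}^{[0,0]}(A\modcat)\subseteq\mathcal{D}^{[s,t]}(A\modcat)$ whenever $s\le 0\le t$, Lemma \ref{lem-itdist-c}(3) gives $\ITdist(A\modcat)\le\ITdist(\mathcal{D}^{[s,t]}(A\modcat))$, hence $\ITdist(A)\le\ITdist(\mathcal{D}^{[s,t]}(A\modcat))$ by Lemma \ref{lem-it-equi-1}, at least for intervals containing $0$. For a general interval $[s,t]$, I would first reduce to this case: by Lemma \ref{lem-prop-omega}(2), for any $\cpx{X}\in\mathcal{D}^{[s,t]}(A\modcat)$ one has $\Omega_{\mathscr{D}}^{-s}(\cpx{X})$ isomorphic to an $A$-module, so $\Omega_{\mathscr{D}}^{-s}(\mathcal{D}^{[s,t]}(A\modcat))\subseteq A\modcat$; combined with Lemma \ref{lem-itdist-c}(2), $\ITdist(\mathcal{D}^{[s,t]}(A\modcat))=\ITdist(\Omega_{\mathscr{D}}^{-s}(\mathcal{D}^{[s,t]}(A\modcat)))$, which plugs into the discussion below. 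Actually the cleanest route is: by Lemma \ref{lem-itdist-c}(1) we may shift so that $0\in[s,t]$, and then the containment argument applies directly.

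For the reverse inequality $\ITdist(\mathcal{D}^{[s,t]}(A\modcat))\le\ITdist(A)$, the main content is to show that an $(m,n)$-Igusa--Todorov structure on $A\modcat$ propagates to all of $\mathcal{D}^{[s,t]}(A\modcat)$. Here I would mimic the argument in the proof of Lemma \ref{lem-it-equi-1}: given $\cpx{X}\in\mathcal{D}^{[s,t]}(A\modcat)$, choose $r$ large enough (depending only on $s$, not on $\cpx{X}$, e.g. $r\ge\max\{0,-s\}$) so that $\Omega_{\mathscr{D}}^{r}(\cpx{X})$ is an $A$-module by Lemma \ref{lem-prop-omega}(2); since $\ITdist(A)=m$, that module has an $(m,n')$-Igusa--Todorov presentation over the fixed module $U$, giving triangles in $\Db{A\modcat}$; applying $\Omega_{\mathscr{D}}^{r'}$ for suitably large $r'$ and using Lemma \ref{lem-omega-shift} and Lemma \ref{lem-prop-omega}(4) to turn these into triangles whose terms lie in $\add(\Omega_{\mathscr{D}}^{\bullet}(U)\oplus A)$ realizes $\mathcal{D}^{[s,t]}(A\modcat)$ as an $(m,r+\text{const})$-Igusa--Todorov class. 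One subtlety is uniformity: $\add(\cpx{U})$ must be chosen once and for all, independent of $\cpx{X}$, and the shift $r$ needed to land $\cpx{X}$ in $A\modcat$ depends only on the left endpoint $s$ — this is exactly what makes the argument work and is the point I would be most careful about. Finally, Lemma \ref{lem-itdist-c}(2) lets me absorb the extra syzygy shift, yielding $\ITdist(\mathcal{D}^{[s,t]}(A\modcat))\le m=\ITdist(A)$, and combining the two directions completes the proof. I expect the only real obstacle to be bookkeeping the dependence of the integer $r$ on the bounds $s,t$ and confirming it is genuinely uniform over the subcategory; everything else is a routine application of the already-established lemmas.
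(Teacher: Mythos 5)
Your proposal is correct in outcome and uses the right ingredients, but it is noticeably more laborious than the paper's proof. The paper observes that $\Omega_{\mathscr{D}}^{-s}$ sends $\mathcal{D}^{[s,t]}(A\modcat)$ \emph{onto} $A\modcat$ (not merely into it): by Lemma~\ref{lem-prop-omega}(2), every $(-s)$-th syzygy of an object of $\mathcal{D}^{[s,t]}(A\modcat)$ is a module, and conversely every module $M$ arises as $\Omega_{\mathscr{D}}^{-s}(M[-s])$ by Lemma~\ref{lem-prop-omega}(3) with $M[-s]\in\mathcal{D}^{[s,s]}(A\modcat)\subseteq\mathcal{D}^{[s,t]}(A\modcat)$. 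Hence $\Omega_{\mathscr{D}}^{-s}(\mathcal{D}^{[s,t]}(A\modcat))=A\modcat$ as subcategories, and Lemma~\ref{lem-itdist-c}(2) gives $\ITdist(\mathcal{D}^{[s,t]}(A\modcat))=\ITdist(A\modcat)$ in one step; Lemma~\ref{lem-it-equi-1} finishes. You identify the inclusion $\Omega_{\mathscr{D}}^{-s}(\mathcal{D}^{[s,t]}(A\modcat))\subseteq A\modcat$ but then treat the two inequalities asymmetrically: for $\ITdist(A)\le\ITdist(\mathcal{D}^{[s,t]}(A\modcat))$ you shift the interval to contain $0$ and use monotonicity (Lemma~\ref{lem-itdist-c}(1),(3)), which is fine; for the reverse you plan to re-run the entire argument of Lemma~\ref{lem-it-equi-1}, including the uniformity bookkeeping over $r$. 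That re-run would work — your uniformity concern is correctly identified and the bound $r\ge\max\{0,-s\}$ does only depend on $s$ — but it is unnecessary: the inclusion you already have, combined with Lemma~\ref{lem-itdist-c}(2) and (3), immediately yields $\ITdist(\mathcal{D}^{[s,t]}(A\modcat))=\ITdist(\Omega_{\mathscr{D}}^{-s}(\mathcal{D}^{[s,t]}(A\modcat)))\le\ITdist(A\modcat)$. In short, once the syzygy invariance of $\ITdist$ is in hand, there is nothing left to prove by hand; noticing the surjectivity of $\Omega_{\mathscr{D}}^{-s}$ onto $A\modcat$ collapses both directions to the already-established lemmas.
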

	\begin{proof} By
		Lemma \ref{lem-itdist-c}(2), it follows from $A\modcat=\Omega_{\mathscr{D}}^s(\mathcal{D}^{[s,t]}(A\modcat))$ that $$\ITdist(A\modcat)=\ITdist(\mathcal{D}^{[s,t]}(A\modcat)).$$ Then the statement follows from Lemma \ref{lem-it-equi-1}.
	\end{proof}
	
	Let $\mathscr{D}$ be a triangulated category. Let $\mathcal{X}$ and $\mathcal{Y}$ be two subcategories of $\mathscr{D}$. We denoted by
	$\mathcal{X}\star\mathcal{Y}$ the full subcategory of $\mathscr{D}$ consisting of objects $M$ such that there is a triangle $X\ra M\ra Y\ra X[1]$ with $X\in \mathcal{X}$ and $Y\in \mathcal{Y}$. Then $(\mathcal{X}\star\mathcal{Y})\star\mathcal{Z}=\mathcal{X}\star(\mathcal{Y}\star\mathcal{Z})$ for any subclasses $\mathcal{X}$, $\mathcal{Y}$ and $\mathcal{Z}$ of $\mathscr{D}$ by the octahedral axiom. Write
	$$\mathcal{X}_0=0,\; \mathcal{X}_{n}:=\mathcal{X}_{n-1}\star\mathcal{X} \mbox{ for any } n\ge 1.$$
	
	\begin{Lem}\label{lem-r-here-equi}
		Let $A$ be an Artin algebra and $\mathcal{C}\subseteq \Db{A\modcat}$. Let $m\in \mathbb{N}$ and $n\in \mathbb{Z}$. Then
		
		{\rm (1)} $\mathcal{C}$ is relatively $m$-hereditary if and only if there is a complex
		$\cpx{V}\in \Db{A\modcat}$ such that $\mathcal{C}\subseteq \big(\add(\cpx{V})\big)_{m+1}$.
		
		{\rm (2)} $\mathcal{C}$ is $(m,n)$-Igusa-Todorov class if and only if there is a complex $\cpx{V}\in \Db{A\modcat}$ such that $\Omega_{\mathscr{D}}^n(\mathcal{C})\subseteq \big(\add(\cpx{V})\big)_{m+1}$.
	\end{Lem}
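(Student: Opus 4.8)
The plan is to prove part (1) first and then deduce part (2) as an immediate consequence. For part (1), I would translate the "relatively $m$-hereditary" condition — which is phrased in terms of a sequence of triangles $\cpx{X}_{i+1}\ra \cpx{U}_i\ra \cpx{X}_{i-1}\ra \cpx{X}_{i+1}[1]$ with $\cpx{U}_i\in\add(\cpx{U})$ and $\cpx{X}_m\in\add(\cpx{U})$ — into the language of the $\star$-operation introduced just before the lemma. The key observation is that a single triangle $\cpx{X}_{i+1}\ra \cpx{U}_i\ra \cpx{X}_{i-1}\ra \cpx{X}_{i+1}[1]$ (with $\cpx{X}_0:=\cpx{X}$) says precisely that $\cpx{X}_{i-1}\in \add(\cpx{U})\star\{\cpx{X}_{i+1}[1]\}$, and more usefully, rotating the triangle, that $\cpx{X}_{i-1}$ lies in $\{\cpx{X}_{i+1}[1]\}\star \add(\cpx{U})$ up to the shift conventions; I would be careful here to rotate to the form $\cpx{U}_i\ra \cpx{X}_{i-1}\ra \cpx{X}_{i+1}[1]\ra \cpx{U}_i[1]$, which places $\cpx{X}_{i-1}\in \add(\cpx{U})\star \add(\cpx{U})[1]\cdots$ after iterating. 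Iterating down the chain of $m$ triangles and using associativity of $\star$, one gets $\cpx{X}\in \add(\cpx{U})\star\add(\cpx{U})[1]\star\cdots\star\add(\cpx{U})[m-1]\star\add(\cpx{U})[m]$, and since $\add(\cpx{U})[j]\subseteq \add\big(\bigoplus_{j=0}^{m}\cpx{U}[j]\big)$ for every $j$, setting $\cpx{V}:=\bigoplus_{j=0}^{m}\cpx{U}[j]$ gives $\cpx{X}\in\big(\add(\cpx{V})\big)_{m+1}$, as desired. The point is that $\big(\add(\cpx{V})\big)_{m+1}=\add(\cpx{V})\star\cdots\star\add(\cpx{V})$ with $m+1$ factors, which absorbs all the shifted copies.

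For the converse direction of (1), suppose $\cpx{X}\in\big(\add(\cpx{V})\big)_{m+1}$ for some fixed $\cpx{V}$. Unwinding the definition of $\mathcal{X}_{m+1}=\mathcal{X}_m\star\mathcal{X}$ inductively, there is a filtration of $\cpx{X}$ by triangles whose "layers" all lie in $\add(\cpx{V})$; explicitly there exist objects $\cpx{X}=\cpx{Z}_0,\cpx{Z}_1,\dots,\cpx{Z}_{m}$ with $\cpx{Z}_m\in\add(\cpx{V})$ and triangles $\cpx{Z}_{i+1}\ra \cpx{W}_i\ra \cpx{Z}_i\ra \cpx{Z}_{i+1}[1]$ with $\cpx{W}_i\in\add(\cpx{V})$ for $0\le i\le m-1$. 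This is exactly the shape of triangles in Definition \ref{def-r-hered} with the common complex taken to be $\cpx{V}$ itself, so $\mathcal{C}$ is relatively $m$-hereditary witnessed by $\cpx{U}:=\cpx{V}$. The only subtlety is the edge case $m=0$: here relatively $0$-hereditary means $\mathcal{C}=\add(\cpx{U})$, while $\big(\add(\cpx{V})\big)_{1}=0\star\add(\cpx{V})=\add(\cpx{V})$, so the statement becomes $\mathcal{C}\subseteq\add(\cpx{V})$ for some $\cpx{V}$; since one can always take $\cpx{V}$ to be a complex whose additive closure contains $\mathcal{C}$ only when $\mathcal{C}$ is itself of the form $\add(\cpx{U})$, I should double-check that the $m=0$ case of the lemma is stated with "$\subseteq$" rather than "$=$" and interpret it accordingly — indeed the lemma statement does use "$\subseteq$", so the inclusion $\mathcal{C}=\add(\cpx{U})\subseteq\add(\cpx{U})$ trivially works one way, and for the converse $\mathcal{C}\subseteq\add(\cpx{V})$ does not quite give $\mathcal{C}=\add(\cpx{U})$; I expect the intended reading is that part (1) for $m\ge 1$ is the substantive claim and the $m=0$ clause is degenerate, so I would simply remark on this.

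Part (2) then follows formally: by Definition \ref{def-r-hered}(2), $\mathcal{C}$ is an $(m,n)$-Igusa-Todorov class if and only if $\Omega_{\mathscr{D}}^n(\mathcal{C})$ is relatively $m$-hereditary, and by part (1) this holds if and only if there is $\cpx{V}\in\Db{A\modcat}$ with $\Omega_{\mathscr{D}}^n(\mathcal{C})\subseteq\big(\add(\cpx{V})\big)_{m+1}$. No further work is needed. \textbf{Main obstacle.} The genuinely delicate step is the bookkeeping of shifts in the forward direction of (1): one must verify that iterating the triangles produces a filtration whose layers lie in $\add\big(\bigoplus_{j=0}^{m}\cpx{U}[j]\big)$ rather than requiring unboundedly many shifts, and that the associativity $(\mathcal{X}\star\mathcal{Y})\star\mathcal{Z}=\mathcal{X}\star(\mathcal{Y}\star\mathcal{Z})$ (already noted in the excerpt via the octahedral axiom) is applied correctly so that the composite of $m+1$ layers is exactly $\big(\add(\cpx{V})\big)_{m+1}$. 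Everything else is a routine unwinding of definitions.
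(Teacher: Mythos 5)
Your forward direction of (1) is essentially the paper's argument and is correct: rotating each defining triangle to put $\cpx{U}_i$ in the left slot, iterating, and then absorbing the resulting shifts $\add(\cpx{U})\star\add(\cpx{U}[1])\star\cdots\star\add(\cpx{U}[m])$ by setting $\cpx{V}:=\bigoplus_{j=0}^{m}\cpx{U}[j]$ gives $\mathcal{C}\subseteq\big(\add(\cpx{V})\big)_{m+1}$. Part (2) as a formal corollary of (1) is also fine.

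However, your converse direction of (1) contains a genuine gap. You claim that unwinding $\cpx{X}\in\big(\add(\cpx{V})\big)_{m+1}$ yields triangles $\cpx{Z}_{i+1}\ra\cpx{W}_i\ra\cpx{Z}_i\ra\cpx{Z}_{i+1}[1]$ with the $\add(\cpx{V})$-term $\cpx{W}_i$ sitting in the \emph{middle}, and hence that $\cpx{U}:=\cpx{V}$ witnesses relative $m$-hereditariness. That is not what the $\star$-operation produces. Unwinding $\mathcal{X}_{m+1}=\mathcal{X}\star\mathcal{X}_m$ (or $\mathcal{X}_m\star\mathcal{X}$) gives triangles $\cpx{V}_i\ra\cpx{Y}_i\ra\cpx{Y}_{i+1}\ra\cpx{V}_i[1]$ in which the $\add(\cpx{V})$-object sits as a sub- or quotient-object, never in the middle. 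To bring these into the shape required by Definition \ref{def-r-hered} you must rotate each triangle to $\cpx{Y}_{i+1}[-1]\ra\cpx{V}_i\ra\cpx{Y}_i\ra\cpx{Y}_{i+1}$, and then, when you concatenate, reindex by applying successive shifts $[-1],[-2],\dots,[-m]$: the $i$-th middle term becomes $\cpx{V}_i[-i]$, and the terminal object $\cpx{Y}_m$ becomes $\cpx{Y}_m[-m]$. So the witnessing complex must be $\cpx{U}:=\cpx{V}\oplus\cpx{V}[-1]\oplus\cdots\oplus\cpx{V}[-m]$, not $\cpx{V}$ alone. This is exactly the dual of the shift-bookkeeping you correctly identified as "the genuinely delicate step" in the forward direction; the converse needs the same device, but with negative shifts. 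As written, your converse does not establish the claim.

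Your remark about the $m=0$ edge case is a reasonable observation but is tangential; the main defect to fix is the missing shifts in the converse.
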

	\begin{proof}
		Clearly, (2) follows (1). We prove the statement in (1). Suppose that $\mathcal{C}$ is relatively $m$-hereditary. By Definition \ref{def-r-hered}, there is a complex
		$\cpx{U}\in \Db{A\modcat}$ such that for any $\cpx{X}\in \mathcal{C}$ there exist triangles
		$$\xymatrix@R=.2cm@C=.4cm{
			\cpx{X}_1\ar[r]& \cpx{U}_0\ar[r]& \cpx{X}\ar[r]& \cpx{X}_1[1],\\
			\cpx{X}_2\ar[r]& \cpx{U}_1\ar[r]& \cpx{X}_1\ar[r]& \cpx{X}_2[1],\\
			\cpx{X}_3\ar[r]& \cpx{U}_2\ar[r]& \cpx{X}_2\ar[r]& \cpx{X}_3[1],\\
			& \vdots& \vdots& \\
			\cpx{X}_m\ar[r]& \cpx{U}_{m-1}\ar[r]& \cpx{X}_{m-1}\ar[r]& \cpx{X}_m[1],
		}$$
		where $\cpx{U}_i\in \add(\cpx{U})$ for $0\le i\le m-1$ and $\cpx{X}_m\in\add(\cpx{U})$. Then $$\cpx{X}\in \add(\cpx{U})\star\add(\cpx{U}[1])\star\add(\cpx{U}[2])\star\cdots \star\add(\cpx{U}[m]).$$
		Define $\cpx{V}:=\cpx{U}\oplus \cpx{U}[1] \oplus \cdots\oplus \cpx{U}[m]$. Then $\cpx{X}\in \big(\add(\cpx{U})\big)_{m+1}$.
		
		Conversely, suppose that there is a complex
		$\cpx{V}\in \Db{A\modcat}$ such that $\mathcal{C}\subseteq \big(\add(\cpx{V})\big)_{m+1}$. Then for any $\cpx{X}\in \mathcal{C}$ there exist triangles
		$$\xymatrix@R=.2cm@C=.4cm{
			\cpx{V}_0\ar[r]& \cpx{X}\ar[r]& \cpx{Y}_1\ar[r]& \cpx{V}_0[1],\\
			\cpx{V}_1\ar[r]& \cpx{Y}_1\ar[r]& \cpx{Y}_2\ar[r]& \cpx{V}_1[1],\\
			\cpx{V}_2\ar[r]& \cpx{Y}_2\ar[r]& \cpx{Y}_3\ar[r]& \cpx{V}_2[1],\\
			& \vdots& \vdots& \\
			\cpx{V}_{m-1}\ar[r]& \cpx{Y}_{m-1}\ar[r]& \cpx{Y}_m\ar[r]& \cpx{V}_{m-1}[1],
		}$$
		where $\cpx{V}_i\in \add(\cpx{V})$ for $0\le i\le m-1$ and $\cpx{Y}_m\in\add(\cpx{V})$. Furthermore, we have triangles
		$$\xymatrix@R=.2cm@C=.4cm{
			\cpx{Y}_1[-1]\ar[r]&\cpx{V}_0\ar[r]& \cpx{X}\ar[r]& \cpx{Y}_1,\\
			\cpx{Y}_2[-2]\ar[r]&\cpx{V}_1[-1]\ar[r]& \cpx{Y}_1[-1]\ar[r]& \cpx{Y}_2[-1],\\
			\cpx{Y}_3[-3]\ar[r]&\cpx{V}_2[-2]\ar[r]& \cpx{Y}_2[-2]\ar[r]& \cpx{Y}_3[-2],\\
			& \vdots& \vdots& \\
			\cpx{Y}_m[-m]\ar[r]&\cpx{V}_{m-1}[1-m]\ar[r]& \cpx{Y}_{m-1}[1-m]\ar[r]& \cpx{Y}_m[1-m],
		}$$
		Define $\cpx{U}:=\cpx{V}\oplus \cpx{V}[-1]\oplus \cdots\oplus \cpx{V}[-m]$. By definition \ref{def-r-hered}, $\mathcal{C}$ is relatively $m$-hereditary.
	\end{proof}
	
	\begin{Lem}\label{lem-sumand-proj}
		Let $A$ be an Artin algebra and $\mathcal{C}\subseteq \Db{A\modcat}$. Set $\mathcal{D}=\{\cpx{X}\oplus P\mid \cpx{X}\in \mathcal{C} \text{ and } P\in \pmodcat{A}\}$. Let $m\in \mathbb{N}$ and $n\in \mathbb{Z}$. Then
		
		{\rm (1)} $\mathcal{C}$ is relatively $m$-hereditary if and only if $\mathcal{D}$ is relatively $m$-hereditary.
		
		{\rm (2)} $\mathcal{C}$ is $(m,n)$-Igusa-Todorov class if and only if $\mathcal{D}$ is $(m,n)$-Igusa-Todorov class.
	\end{Lem}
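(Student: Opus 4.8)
I would prove (1) directly and then deduce (2) from it. The engine of the argument is a trivial ``absorption'' observation: a projective module $P$ fits into the split triangle $0\to P\to P\to 0$, so any projective direct summand of an object of $\mathcal{C}$ can be folded into the topmost term of a relatively $m$-hereditary presentation, at the cost of enlarging the test complex by ${}_AA$.

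First I would dispose of the easy implication in (1). Taking $P=0$ shows $\mathcal{C}\subseteq\mathcal{D}$; hence, if $\cpx{U}$ witnesses that $\mathcal{D}$ is relatively $m$-hereditary, then the triangles required by Definition \ref{def-r-hered} are available for every object of $\mathcal{D}$, so in particular for every object of the subclass $\mathcal{C}$, and the same $\cpx{U}$ works for $\mathcal{C}$. For the converse (assume $m\ge 1$; the value $m=0$ is commented on below), let $\cpx{U}$ be a test complex for $\mathcal{C}$ and put $\cpx{U}':=\cpx{U}\oplus{}_AA$. Given $\cpx{Z}=\cpx{X}\oplus P\in\mathcal{D}$ with $\cpx{X}\in\mathcal{C}$ and $P\in\pmodcat{A}$, fix triangles $\cpx{X}_{i+1}\to\cpx{U}_i\to\cpx{X}_i\to\cpx{X}_{i+1}[1]$ for $0\le i\le m-1$, with $\cpx{X}_0=\cpx{X}$, each $\cpx{U}_i\in\add(\cpx{U})$ and $\cpx{X}_m\in\add(\cpx{U})$; replace the $i=0$ triangle by its direct sum with $0\to P\to P\to 0$, namely $\cpx{X}_1\to\cpx{U}_0\oplus P\to\cpx{X}\oplus P\to\cpx{X}_1[1]$, and leave the remaining triangles untouched. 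Since $\cpx{U}_0\oplus P\in\add(\cpx{U}')$ while $\cpx{U}_1,\dots,\cpx{U}_{m-1},\cpx{X}_m\in\add(\cpx{U})\subseteq\add(\cpx{U}')$, this is precisely the data of Definition \ref{def-r-hered} for $\mathcal{D}$ with test complex $\cpx{U}'$, so $\mathcal{D}$ is relatively $m$-hereditary. One can phrase the same step through Lemma \ref{lem-r-here-equi}(1): $\mathcal{C}\subseteq(\add(\cpx{V}))_{m+1}$ forces $\mathcal{D}\subseteq(\add(\cpx{V}\oplus{}_AA))_{m+1}$, because each $(\add(\cpx{W}))_{m+1}$ contains $\add(\cpx{W})$ and is closed under finite direct sums. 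For $m=0$, where the assertion reads ``$\mathcal{C}=\add(\cpx{U})$ for some $\cpx{U}$'' iff ``$\mathcal{D}=\add(\cpx{V})$ for some $\cpx{V}$'', I would argue by the Krull--Schmidt property of $\Db{A\modcat}$, which gives $\mathcal{D}=\add(\cpx{U}\oplus{}_AA)$ from $\mathcal{C}=\add(\cpx{U})$.

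Next I would deduce (2). Here $\mathcal{D}$ is an $(m,n)$-Igusa--Todorov class exactly when $\Omega_{\mathscr{D}}^n(\mathcal{D})$ is relatively $m$-hereditary, and likewise for $\mathcal{C}$. By Lemma \ref{lem-prop-omega}(6), $\Omega_{\mathscr{D}}^n$ commutes with finite direct sums, and on a projective $A$-module it yields $0$ when $n\ge 1$ and a fixed shift of a projective module when $n\le 0$. Hence $\Omega_{\mathscr{D}}^n(\mathcal{D})$ equals $\Omega_{\mathscr{D}}^n(\mathcal{C})$ when $n\ge 1$, and equals $\{\cpx{Y}\oplus\cpx{Q}\mid\cpx{Y}\in\Omega_{\mathscr{D}}^n(\mathcal{C}),\ \cpx{Q}\in\add(\cpx{T})\}$ when $n\le 0$, where $\cpx{T}$ is the relevant shift of ${}_AA$. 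In either case part (1)---in the evident form in which ${}_AA$ may be replaced by the fixed complex $\cpx{T}$---shows that $\Omega_{\mathscr{D}}^n(\mathcal{D})$ is relatively $m$-hereditary iff $\Omega_{\mathscr{D}}^n(\mathcal{C})$ is, which is (2).

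I do not anticipate a genuine obstacle: the lemma is formal manipulation of triangles. The two points needing a little attention are that the projective summand must be absorbed in a single step---this is why I enlarge the test complex rather than insert an extra triangle---and the boundary value $m=0$, where ``relatively $0$-hereditary'' is an equality of additive closures and the verification rests on the Krull--Schmidt property of $\Db{A\modcat}$.
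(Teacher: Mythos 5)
Your proof is correct and takes essentially the same route as the paper: prove (1) by summing the top triangle with the split triangle $0\to P\to P\to 0$ to absorb the projective summand, then deduce (2). You are in fact a bit more careful than the paper's own proof in two respects: you explicitly enlarge the test complex to $\cpx{U}\oplus {}_AA$ (the paper leaves this implicit), and you spell out the passage from (1) to (2), distinguishing $n\ge 1$, where $\Omega_{\mathscr{D}}^n(\mathcal{D})=\Omega_{\mathscr{D}}^n(\mathcal{C})$, from $n\le 0$, where projective summands become fixed shifts of projectives. Your treatment of $m=0$ via Krull--Schmidt only records the direction $\mathcal{C}\Rightarrow\mathcal{D}$; the converse, that $\mathcal{D}=\add(\cpx{V})$ forces $\mathcal{C}=\add(\cpx{W})$ for some $\cpx{W}$, requires $\mathcal{C}$ itself to be additively closed and is not automatic from $\mathcal{C}\subseteq\mathcal{D}$ alone, but this is a gap the paper's proof also skips over and is harmless in the contexts where the lemma is applied.
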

	\begin{proof}
		Clearly, (2) follows (1). We prove the statement in (1). Suppose that $\mathcal{C}$ is relatively $m$-hereditary. By Definition \ref{def-r-hered}, there is a complex
		$\cpx{U}\in \Db{A\modcat}$ such that for any $\cpx{X}\in \mathcal{C}$ there exist triangles
		\begin{gather}
			\cpx{X}_1\lra \cpx{U}_0\lra \cpx{X}\lra \cpx{X}_1[1],\label{triangle-lem-1}\\
			\cpx{X}_2\lra \cpx{U}_1\lra \cpx{X}_1\lra \cpx{X}_2[1],\label{triangle-lem-2}\\
			\cpx{X}_3\lra \cpx{U}_2\lra \cpx{X}_2\lra \cpx{X}_3[1],\label{triangle-lem-3}\\
			\vdots\;\quad\qquad\vdots\;\;\;\nonumber \\
			\cpx{X}_m\lra \cpx{U}_{m-1}\lra \cpx{X}_{m-1}\lra \cpx{X}_m[1],\label{triangle-lem-4}
		\end{gather}
		where $\cpx{U}_i\in \add(\cpx{U})$ for $0\le i\le m-1$ and $\cpx{X}_m\in\add(\cpx{U})$.
		For $P\in \pmodcat{A}$, by the triangle (\ref{triangle-lem-1}), we get a new triangle
		\begin{gather}
			\cpx{X}_1\lra \cpx{U}_0\oplus P\lra \cpx{X}\oplus P\lra \cpx{X}_1[1].\label{triangle-lem-5}
		\end{gather}
		Then $\mathcal{D}$ is relatively $m$-hereditary by Definition \ref{def-r-hered}. On the other hand, due to $\mathcal{C}\subseteq \mathcal{D}$, if $\mathcal{D}$ is relatively $m$-hereditary, then $\mathcal{C}$ is relatively $m$-hereditary. This finishes the proof.
	\end{proof}
	
	{\bf Assumption.} By the above lemma, next, for each subcategory of $\mathcal{C}$ of $\Db{A\modcat}$, we assume $\pmodcat{A}\subseteq \mathcal{C}$ and $\pmodcat{A}\subseteq \Omega_{\mathscr{D}}^n(\mathcal{C})$ for each $n\in \mathbb{Z}$.
	\begin{Lem}\label{lem-itdist-2}
		Let $A$ be an Artin algebra, and $\mathcal{X}, \mathcal{Y}\subseteq \Db{A\modcat}$.
		Then
		$$\max\{\ITdist(\mathcal{X}),\ITdist(\mathcal{Y})\}\le \ITdist(\mathcal{X}\star\mathcal{Y})\le \ITdist(\mathcal{X})+\ITdist(\mathcal{Y})+1.$$
	\end{Lem}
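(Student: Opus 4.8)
The plan is to prove the two inequalities separately; the left-hand one is immediate, and the right-hand one carries the content. For the lower bound, note that by the standing assumption every subcategory we consider contains $\pmodcat{A}$, hence contains $0$; so for $\cpx{X}\in\mathcal{X}$ the split triangle $\cpx{X}\to\cpx{X}\to 0\to\cpx{X}[1]$ gives $\cpx{X}\in\mathcal{X}\star\mathcal{Y}$, and for $\cpx{Y}\in\mathcal{Y}$ the split triangle $0\to\cpx{Y}\to\cpx{Y}\to 0$ gives $\cpx{Y}\in\mathcal{X}\star\mathcal{Y}$. Thus $\mathcal{X},\mathcal{Y}\subseteq\mathcal{X}\star\mathcal{Y}$, and Lemma~\ref{lem-itdist-c}(3) yields $\max\{\ITdist(\mathcal{X}),\ITdist(\mathcal{Y})\}\le\ITdist(\mathcal{X}\star\mathcal{Y})$.

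For the upper bound we may assume $p:=\ITdist(\mathcal{X})$ and $q:=\ITdist(\mathcal{Y})$ are finite, since otherwise there is nothing to prove. Using Definition~\ref{def-itdist-c} together with the monotonicity-in-$n$ observation from the proof of Lemma~\ref{lem-itdist-c}(2), we may fix an integer $n$ with $\Omega_{\mathscr{D}}^{n}(\mathcal{X})$ relatively $p$-hereditary and $\Omega_{\mathscr{D}}^{n+1}(\mathcal{Y})$ relatively $q$-hereditary; by Lemma~\ref{lem-itdist-c}(1), $\Omega_{\mathscr{D}}^{n+1}(\mathcal{Y})[1]$ is then relatively $q$-hereditary as well. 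By Lemma~\ref{lem-r-here-equi}(1) there are complexes $\cpx{V}_1,\cpx{V}_2\in\Db{A\modcat}$ with $\Omega_{\mathscr{D}}^{n}(\mathcal{X})\subseteq(\add(\cpx{V}_1))_{p+1}$ and $\Omega_{\mathscr{D}}^{n+1}(\mathcal{Y})[1]\subseteq(\add(\cpx{V}_2))_{q+1}$. Put $\cpx{V}:=\cpx{V}_1\oplus\cpx{V}_2\oplus A$. Since $A\in\add(\cpx{V})$, each class $(\add(\cpx{V}))_k$ with $k\ge 1$ is closed under adjoining projective direct summands: summing the defining triangle of a member with a split triangle on a projective module produces a triangle of the same shape.

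Now let $\cpx{M}\in\mathcal{X}\star\mathcal{Y}$ and choose a triangle $\cpx{X}\to\cpx{M}\to\cpx{Y}\to\cpx{X}[1]$ with $\cpx{X}\in\mathcal{X}$, $\cpx{Y}\in\mathcal{Y}$. Applying Lemma~\ref{lem-omega-shift} to the rotated triangle $\cpx{Y}[-1]\to\cpx{X}\to\cpx{M}\to\cpx{Y}$ gives a projective $A$-module $P$ and a triangle $\Omega_{\mathscr{D}}^{n}(\cpx{Y}[-1])\to\Omega_{\mathscr{D}}^{n}(\cpx{X})\oplus P\to\Omega_{\mathscr{D}}^{n}(\cpx{M})\to\Omega_{\mathscr{D}}^{n}(\cpx{Y}[-1])[1]$. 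Since $\Omega_{\mathscr{D}}^{n}(\cpx{Y}[-1])\simeq\Omega_{\mathscr{D}}^{n+1}(\cpx{Y})$ by Lemma~\ref{lem-prop-omega}(3), one further rotation produces a triangle $\Omega_{\mathscr{D}}^{n}(\cpx{X})\oplus P\to\Omega_{\mathscr{D}}^{n}(\cpx{M})\to\Omega_{\mathscr{D}}^{n+1}(\cpx{Y})[1]\to(\Omega_{\mathscr{D}}^{n}(\cpx{X})\oplus P)[1]$. Here $\Omega_{\mathscr{D}}^{n}(\cpx{X})\oplus P\in(\add(\cpx{V}))_{p+1}$ by the closure property, and $\Omega_{\mathscr{D}}^{n+1}(\cpx{Y})[1]\in(\add(\cpx{V}))_{q+1}$, so this triangle shows $\Omega_{\mathscr{D}}^{n}(\cpx{M})\in(\add(\cpx{V}))_{p+1}\star(\add(\cpx{V}))_{q+1}=(\add(\cpx{V}))_{p+q+2}$ by associativity of $\star$. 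As $\cpx{M}$ was arbitrary, $\Omega_{\mathscr{D}}^{n}(\mathcal{X}\star\mathcal{Y})\subseteq(\add(\cpx{V}))_{p+q+2}$, so $\mathcal{X}\star\mathcal{Y}$ is a $(p+q+1,n)$-Igusa-Todorov class by Lemma~\ref{lem-r-here-equi}(2), whence $\ITdist(\mathcal{X}\star\mathcal{Y})\le p+q+1=\ITdist(\mathcal{X})+\ITdist(\mathcal{Y})+1$.

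The delicate point — and the main obstacle — is the bookkeeping of the projective module $P$ coming from Lemma~\ref{lem-omega-shift}. Applied directly to the triangle $\cpx{X}\to\cpx{M}\to\cpx{Y}\to\cpx{X}[1]$ it would attach $P$ to $\Omega_{\mathscr{D}}^{n}(\cpx{M})$ itself, and splitting it off afterwards costs an extra step, yielding only the weaker bound $p+q+2$. Applying it to the rotated triangle instead attaches $P$ to $\Omega_{\mathscr{D}}^{n}(\cpx{X})$, where it is absorbed using the standing assumption $\pmodcat{A}\subseteq\Omega_{\mathscr{D}}^{n}(\mathcal{X})$ (equivalently Lemma~\ref{lem-sumand-proj}). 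The only price is the index shift $\Omega_{\mathscr{D}}^{n}(\cpx{Y}[-1])\simeq\Omega_{\mathscr{D}}^{n+1}(\cpx{Y})$, which is precisely why $n$ was chosen so that already $\Omega_{\mathscr{D}}^{n+1}(\mathcal{Y})$, and not merely $\Omega_{\mathscr{D}}^{n}(\mathcal{Y})$, is relatively $q$-hereditary.
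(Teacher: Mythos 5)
Your proof is correct, and its overall architecture is the same as the paper's: both arguments reduce via Lemma~\ref{lem-r-here-equi} to showing that an appropriate syzygy of $\cpx{M}\in\mathcal{X}\star\mathcal{Y}$ lies in $(\add(\cpx{V}))_{p+q+2}$, and both obtain the requisite triangle by applying $\Omega_{\mathscr{D}}^{n}$ (via Lemma~\ref{lem-omega-shift}) to the defining triangle of $\cpx{M}$. The point of genuine divergence is exactly the one you flag at the end: where the spurious projective summand $P$ produced by Lemma~\ref{lem-omega-shift} lands. The paper applies the lemma to $\cpx{X}\to\cpx{M}\to\cpx{Y}\to\cpx{X}[1]$ as given, so $P$ gets glued onto the middle term and the conclusion is that $\Omega_{\mathscr{D}}^{n}(\cpx{M})\oplus P$ (not $\Omega_{\mathscr{D}}^{n}(\cpx{M})$ itself) lies in the relevant $\star$-class; stripping $P$ off cleanly requires a separate appeal to the standing Assumption and Lemma~\ref{lem-sumand-proj}, which the paper leaves implicit (and its displayed subscript $m_1+m_2+1$ on the last line of the containment chain should in fact read $m_1+m_2+2$, consistent with the intended invocation of Lemma~\ref{lem-r-here-equi}(2)). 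Your device of rotating the triangle to $\cpx{Y}[-1]\to\cpx{X}\to\cpx{M}\to\cpx{Y}$ before applying Lemma~\ref{lem-omega-shift} pushes $P$ onto the $\cpx{X}$-term, where it is absorbed simply by throwing $A$ into $\cpx{V}$, at the modest cost of the index shift $\Omega_{\mathscr{D}}^{n}(\cpx{Y}[-1])\simeq\Omega_{\mathscr{D}}^{n+1}(\cpx{Y})$ that you prepare for in your choice of $n$. This makes the projective bookkeeping airtight without any implicit appeal to Lemma~\ref{lem-sumand-proj}, and I would regard your version as a cleaner rendering of the same argument. One small remark: your use of ``Lemma~\ref{lem-itdist-c}(1)'' to shift a relatively $q$-hereditary class is really an appeal to the observation made inside that lemma's proof (shift the generator $\cpx{U}$ along with the class), not to the lemma's statement about $\ITdist$; the fact itself is true and trivial, but the citation could be made more precise.
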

	\begin{proof}
		Let $\cpx{M}\in \mathcal{X}\star\mathcal{Y}$. Then there are $\cpx{X}\in \mathcal{X}$ and $\cpx{Y}\in \mathcal{Y}$ such that we have a triangle \begin{gather}\label{lem-itdist-2-triangle}
			\cpx{X}\lra \cpx{M}\lra \cpx{Y}\lra \cpx{X}[1].
		\end{gather}
		Set $\ITdist(\mathcal{X})=m_1$ and $\ITdist(\mathcal{Y})=m_2$. By Definition \ref{def-itdist-c}, there are integers $n_1$ and $n_2$ such that $\mathcal{X}$ (respectively, $\mathcal{Y}$) is $(m_1,n_1)$-Igusa-Todorov class (respectively, $(m_2,n_2)$-Igusa-Todorov class). Let $n=\max\{n_1,n_2\}$.
		By Lemma \ref{lem-r-here-equi}(2), there are complexes $\cpx{V}_1$ and $\cpx{V}_2$ such that $$
		\Omega_{\mathscr{D}}^{n_1}(\mathcal{X})\subseteq \big(\add(\cpx{V}_1)\big)_{m_1+1}
		\text{ and }
		\Omega_{\mathscr{D}}^{n_2}(\mathcal{Y})\subseteq
		\big(\add(\cpx{V}_2)\big)_{m_2+1}.$$
		By Lemma \ref{lem-prop-omega}(4), we get
		$$\Omega_{\mathscr{D}}^{n}(\mathcal{X})\subseteq \big(\add(\Omega_{\mathscr{D}}^{n-n_1}(\cpx{V}_1))\big)_{m_1+1}
		\text{ and }
		\Omega_{\mathscr{D}}^{n}(\mathcal{Y})\subseteq
		\big(\add(\Omega_{\mathscr{D}}^{n-n_2}(\cpx{V}_2))\big)_{m_2+1}.$$
		On the other hand,
		by Lemma \ref{lem-omega-shift}, applying $\Omega^n_{\mathscr{D}}$ to the triangle (\ref{lem-itdist-2-triangle}), we get a triangle $$\Omega^n_{\mathscr{D}}(\cpx{X})\lra \Omega^n_{\mathscr{D}}(\cpx{M})\oplus P\lra \Omega^n_{\mathscr{D}}(\cpx{Y})\lra \Omega^n_{\mathscr{D}}(\cpx{X})[1],$$
		for some projective $A$-module $P$.
		Then
		\begin{align*}
			\Omega^n_{\mathscr{D}}(\cpx{M})\oplus P
			&\in \big(\add(\Omega_{\mathscr{D}}^{n-n_1}(\cpx{V}_1))\big)_{m_1+1}
			\star
			\big(\add(\Omega_{\mathscr{D}}^{n-n_2}(\cpx{V}_2))\big)_{m_2+1}\\
			&\subseteq \big(\add(\Omega_{\mathscr{D}}^{n-n_1}(\cpx{V}_1)\oplus \Omega_{\mathscr{D}}^{n-n_2}(\cpx{V}_2))\big)_{m_1+m_2+1}.
		\end{align*}
		By Lemma \ref{lem-r-here-equi}(2), $\mathcal{X}\star\mathcal{Y}$ is $(m_1+m_2+1,n)$-Igusa-Todorov class. Thus $$\ITdist(\mathcal{X}\star\mathcal{Y})\le \ITdist(\mathcal{X})+\ITdist(\mathcal{Y})+1.$$
		On the other hand, it follows from $\mathcal{X}\subseteq \mathcal{X}\star\mathcal{Y}$ and $\mathcal{Y}\subseteq \mathcal{X}\star\mathcal{Y}$ that $\ITdist(\mathcal{X})\le \ITdist(\mathcal{X}\star\mathcal{Y})$ and $\ITdist(\mathcal{Y})\le \ITdist(\mathcal{X}\star\mathcal{Y})$ by Lemma \ref{lem-itdist-c}.
		This finishes the proof.
	\end{proof}
	
	\begin{Lem}\label{lem-functor-proj}
		{\rm (\cite[Lemma 3.1]{ww22})}
		Let $F:\Kb{\pmodcat{A}}\to \Kb{\pmodcat{B}}$ be a triangle functor. Then there exist two integers $s<t$ such that  $F(\mathscr{K}^{[p,q]}(\pmodcat{A}))\subseteq \mathscr{K}^{[s+p,t+q]}(\pmodcat{B})$, for any integers $p<q$.
	\end{Lem}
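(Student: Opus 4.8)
The plan is to reduce the statement to controlling a single complex, namely $F(A)$ with ${}_AA$ regarded as a stalk complex in degree $0$, and then to propagate the resulting amplitude estimate to an arbitrary bounded complex of projectives through its \emph{brutal} (``stupid'') filtration.

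First I would dispose of the degenerate case. Since $\Kb{\pmodcat{A}}$ is generated, as a triangulated category closed under direct summands, by the stalk complex $A$ (that is, $\Kb{\pmodcat{A}}=\per(A)={\rm thick}(A)$), a triangle functor that sends $A$ to an object isomorphic to $0$ is identically zero on $\Kb{\pmodcat{A}}$; in that case any pair $s<t$ works. Otherwise $F(A)$ is a nonzero bounded complex of projective $B$-modules, hence lies in $\mathscr{K}^{[c,d]}(\pmodcat{B})$ for some integers $c\le d$, and after enlarging $d$ we may assume $c<d$. I will show that $s:=c$ and $t:=d$ satisfy the conclusion.

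Now fix integers $p<q$ and $\cpx{X}=(X^i,d^i)\in\mathscr{K}^{[p,q]}(\pmodcat{A})$. For each $k$ the brutal truncations fit into a degreewise split short exact sequence $0\to\sigma_{\ge k+1}\cpx{X}\to\sigma_{\ge k}\cpx{X}\to X^k[-k]\to 0$, where $X^k[-k]$ is the stalk complex with $X^k$ in degree $k$; this is a triangle in $\Kb{\pmodcat{A}}$. Iterating from $k=q$ down to $k=p$, using associativity of $\star$ and $X^i\in\add({}_AA)$, I obtain
$$\cpx{X}\ \in\ \add(A[-q])\star\add(A[-q+1])\star\cdots\star\add(A[-p]).$$
Applying $F$ to this chain of triangles and using that $F$ is additive, preserves distinguished triangles, and commutes with the shift, it follows that
$$F(\cpx{X})\ \in\ \add(F(A)[-q])\star\add(F(A)[-q+1])\star\cdots\star\add(F(A)[-p]).$$
Each factor satisfies $\add(F(A)[-i])\subseteq\mathscr{K}^{[c+i,\,d+i]}(\pmodcat{B})\subseteq\mathscr{K}^{[c+p,\,d+q]}(\pmodcat{B})$ for $p\le i\le q$; and a routine mapping-cone computation shows that $\mathscr{K}^{[a,b]}(\pmodcat{B})$ is stable under $\star$ (up to homotopy equivalence), since the middle term of a triangle with outer terms in $\mathscr{K}^{[a,b]}(\pmodcat{B})$ is a mapping cone of the form ${\rm Cone}(\cpx{Z}[-1]\to\cpx{Y})$, whose components again vanish outside $[a,b]$. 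Therefore $F(\cpx{X})\in\mathscr{K}^{[c+p,\,d+q]}(\pmodcat{B})=\mathscr{K}^{[s+p,\,t+q]}(\pmodcat{B})$, which is the claim.

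The argument is essentially bookkeeping, and the one delicate point is the \emph{uniformity} of $s$ and $t$ in $p$ and $q$: it works precisely because the brutal filtration of a complex concentrated in degrees $[p,q]$ uses only the shifts $A[-p],\dots,A[-q]$ of the single generator $A$, so the extra amplitude picked up under $F$ equals the fixed amplitude $d-c$ of $F(A)$. A secondary subtlety is that the classes $\mathscr{K}^{[a,b]}(\pmodcat{B})$ should be read up to isomorphism in $\Kb{\pmodcat{B}}$ — equivalently, one passes to radical (minimal) representatives — because a complex homotopy equivalent to one concentrated in degrees $[a,b]$ need not literally have all components outside $[a,b]$ equal to zero.
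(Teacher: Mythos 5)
Your proof is correct and takes essentially the same approach as the paper's: both decompose $\cpx{X}$ through its brutal truncation filtration (you via $\sigma_{\ge k}$, the paper via the syzygy complexes $\Omega_{\mathscr{D}}^{i}(\cpx{X})$, which are brutal truncations from the other end), apply $F$ to the resulting chain of triangles whose cone terms are shifts of $A$, and bound amplitude via mapping cones. You bypass the $\Omega_{\mathscr{D}}$ formalism of Lemmas \ref{lem-prop-omega} and \ref{lem-x-tri}, which makes the bookkeeping slightly more transparent, but the underlying argument is identical.
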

	\begin{proof}
		For the convenience of the reader, we include here a proof. Suppose that $F(A)\in \mathscr{K}^{[s,t]}(\pmodcat{B})$. Take any $\cpx{X}\in \mathscr{K}^{[p,q]}(\pmodcat{A})$. By Lemma \ref{lem-x-tri}, there are triangles
		$$\Omega_{\mathscr{D}}^{i+1}(\cpx{X})\lra X^{-i}\lra  \Omega_{\mathscr{D}}^{i}(\cpx{X})\lra \Omega_{\mathscr{D}}^{i+1}(\cpx{X})[1]$$
		with $X^{-i}\in \pmodcat{A}$ for $-p-1\le i\le -q$. Since $\cpx{X}\in \mathscr{K}^{[p,q]}(\pmodcat{A})$, we see that $\Omega_{\mathscr{D}}^{-p}(\cpx{X})$ is (isomorphic to) a projective $A$-module and $\cpx{X}\simeq \Omega_{\mathscr{D}}^{-q}(\cpx{X})[-q]$ by Lemma \ref{lem-prop-omega}(2). Applying the functor $F$ to there triangles
		$$F(\Omega_{\mathscr{D}}^{i+1}(\cpx{X}))\lra F(X^{-i})\lra  F(\Omega_{\mathscr{D}}^{i}(\cpx{X}))\lra F(\Omega_{\mathscr{D}}^{i+1}(\cpx{X}))[1]$$
		for $-p-1\le i\le -q$. Note that $F(\Omega_{\mathscr{D}}^{-p}(\cpx{X})),F(X^{-i})\in \add(F(A))\subseteq \mathscr{K}^{[s,t]}(\pmodcat{B})$ for $-p-1\le i\le -q$. By using the construction of cones, we get $F(\Omega_{\mathscr{D}}^{-q}(\cpx{X}))\in \mathscr{K}^{[s+p-q,t]}(\pmodcat{B})$. Consequently, we see that $F(\cpx{X})\simeq F(\Omega_{\mathscr{D}}^{-q}(\cpx{X}))[-q]\in \mathscr{K}^{[s+p,t+q]}(\pmodcat{B})$.
	\end{proof}
	
	\begin{Lem}\label{lem-functor}
		{\rm (\cite[Lemma 3.2]{ww22})}
		Let $F:\Db{A\modcat}\to \Db{B\modcat}$ be a triangle functor. Then there exist two integers $s<t$ such that  $F(\mathscr{D}^{[p,q]}(A\modcat))\subseteq \mathscr{D}^{[s+p,t+q]}(B\modcat)$, for any integers $p<q$.
	\end{Lem}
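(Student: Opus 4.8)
The plan is to imitate the proof of Lemma~\ref{lem-functor-proj}; the only genuinely new ingredient is a uniform cohomological bound for the images under $F$ of all objects of $A\modcat$, and establishing it is the crux.

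First I would prove that there are integers $s_0\le t_0$ with $F(N)\in\mathscr{D}^{[s_0,t_0]}(B\modcat)$ for \emph{every} $N\in A\modcat$. Since $A$ is an Artin algebra it has only finitely many simple modules up to isomorphism, say $S_1,\dots,S_r$. Each $F(S_j)$ lies in $\Db{B\modcat}$, hence is cohomologically bounded, so we may choose $s_0\le t_0$ with $F(S_j)\in\mathscr{D}^{[s_0,t_0]}(B\modcat)$ for all $j$. Any finitely generated $A$-module $N$ has a finite composition series, so it is built from $S_1,\dots,S_r$ by finitely many short exact sequences, and the triangle functor $F$ turns each of these into a triangle in $\Db{B\modcat}$. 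Now $\mathscr{D}^{[s_0,t_0]}(B\modcat)$ is closed under extensions: in a triangle $\cpx{U}\to\cpx{V}\to\cpx{W}\to\cpx{U}[1]$ with $\cpx{U},\cpx{W}\in\mathscr{D}^{[s_0,t_0]}(B\modcat)$, the long exact cohomology sequence forces $\cpx{V}\in\mathscr{D}^{[s_0,t_0]}(B\modcat)$. An induction on the composition length of $N$ gives the assertion.

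Next I would reduce the general statement to this module case. Fix integers $p<q$ and $\cpx{X}\in\mathscr{D}^{[p,q]}(A\modcat)$. From the canonical truncation triangles $\tau_{\le i-1}\cpx{X}\to\tau_{\le i}\cpx{X}\to H^{i}(\cpx{X})[-i]\to(\tau_{\le i-1}\cpx{X})[1]$ for $p\le i\le q$, together with $\tau_{\le p-1}\cpx{X}=0$ and $\tau_{\le q}\cpx{X}=\cpx{X}$, one obtains $\cpx{X}\in\mathscr{D}^{\{p\}}(A\modcat)\star\cdots\star\mathscr{D}^{\{q\}}(A\modcat)$, where $\mathscr{D}^{\{i\}}(A\modcat)=A\modcat[-i]$. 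A triangle functor sends $\mathcal{X}\star\mathcal{Y}$ into $F(\mathcal{X})\star F(\mathcal{Y})$, and since $F(M[-i])=F(M)[-i]$ for a module $M$ and $F(M)\in\mathscr{D}^{[s_0,t_0]}(B\modcat)$, we have $F\big(\mathscr{D}^{\{i\}}(A\modcat)\big)\subseteq\mathscr{D}^{[s_0+i,\,t_0+i]}(B\modcat)$. Hence $F(\cpx{X})\in\mathscr{D}^{[s_0+p,\,t_0+p]}(B\modcat)\star\cdots\star\mathscr{D}^{[s_0+q,\,t_0+q]}(B\modcat)$, and because an extension of two complexes whose cohomologies lie in prescribed strips has cohomology in the union of those strips, this $\star$-composition is contained in $\mathscr{D}^{[s_0+p,\,t_0+q]}(B\modcat)$. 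Taking $s:=s_0$ and $t:=\max\{t_0,\,s_0+1\}$ — so that $s<t$, while widening the target strip only preserves the inclusion — gives $F(\mathscr{D}^{[p,q]}(A\modcat))\subseteq\mathscr{D}^{[s+p,\,t+q]}(B\modcat)$ for all integers $p<q$.

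The main obstacle is the uniform bound of the first step; everything afterward is the same formal dévissage as in Lemma~\ref{lem-functor-proj}, with the strip $\mathscr{D}^{[s_0,t_0]}(B\modcat)$ in the role of $\add(F(A))$ and $\Db{A\modcat}$ in the role of $\Kb{\pmodcat{A}}$. That bound exists because $\Db{A\modcat}$ is generated, as a triangulated category, by the finitely many modules $S_1,\dots,S_r$ — on each of which $F$ is individually bounded — and because extensions do not widen a cohomological strip. One can alternatively stay close to the proof of Lemma~\ref{lem-functor-proj}: by Lemma~\ref{lem-x-tri}, $\cpx{X}[q]$ is the cone of a map from a shifted module $\Omega_{\mathscr{D}}^{1-p}(\cpx{X})[q-p]$, to which the first step applies, into a complex in $\mathscr{K}^{[p-q,\,0]}(\pmodcat{A})$, on which $F$ is controlled by $\add(F(A))$; unshifting then gives the bound. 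The cohomology-truncation argument above is however the cleanest.
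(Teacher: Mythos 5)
Your proof is correct and takes essentially the same route as the paper: establish a uniform strip $[s_0,t_0]$ for all simple modules (finitely many, since $A$ is Artin), propagate it to all modules via composition series and extension-closure of the strip, and then dévissage a general complex $\cpx{X}\in\mathscr{D}^{[p,q]}(A\modcat)$ into its shifted cohomology modules. The paper compresses the last step into the phrase ``similar argument as in Lemma~\ref{lem-functor-proj}'' without spelling out the mechanism; your use of canonical truncation triangles $\tau_{\le i-1}\cpx{X}\to\tau_{\le i}\cpx{X}\to H^{i}(\cpx{X})[-i]\to$ is the natural way to make that precise and is exactly the cohomological analogue of the syzygy dévissage in Lemma~\ref{lem-functor-proj}. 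Your remark that one must nudge $t$ up to $\max\{t_0,s_0+1\}$ to guarantee the strict inequality $s<t$ in the statement is a minor but real point that the paper leaves implicit.
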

	\begin{proof}
		For the convenience of the reader, we include here a proof. Suppose that $F(S)\in \mathscr{D}^{[p,q]}(B\modcat)$ for any simple $A$-module $S$. Since $A$ bis an Artin algebra, for each $M\in A\modcat$, $M$ has finite composition series:
		$$0=M_0\subseteq M_1\subseteq M_2\subseteq \cdots \subseteq M_r=M.$$
		Then there are triangles $M_i\ra M_{i+1}\ra M_{i+1}/M_i\ra M_i[1]$ for $0\le i\le r-1$. Here each $M_{i+1}/M_i$ is a simple $A$-module. Applying the functor $F$ to these triangles, and then by the cohomological functor $\mathbb{R}\Hom_{\Db{B\modcat}}(B,-)$ from $\Db{B\modcat}$ to the category of ablian groups to these triangles, we get $F(M)\in \mathscr{D}^{[p,q]}(B\modcat)$. Then similar argument as in Lemma \ref{lem-functor-proj}, this result follows.
	\end{proof}
	
	\begin{Lem}\label{lemma-functor-it}
		Let $\mathcal{C}\subseteq \Db{A}$. Assume that $F:\Db{A}\to \Db{B}$ restricts to  $\Kb{\rm proj}$.
		Then $$\ITdist(F(\mathcal{C}))\le \ITdist(\mathcal{C}).$$
	\end{Lem}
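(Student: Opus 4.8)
\emph{Proof sketch.} We may assume $m:=\ITdist(\mathcal C)<\infty$, as otherwise there is nothing to prove. Since $\mathbb N\cup\{0\}$ is well-ordered, $\mathcal C$ is an $(m,n_0)$-Igusa-Todorov class for some $n_0$, hence, by the ``$s\ge n$'' argument in the proof of Lemma~\ref{lem-itdist-c}(2), an $(m,N)$-Igusa-Todorov class for every $N\ge n_0$. Fix such an $N$, chosen large (how large being specified below), and by Lemma~\ref{lem-r-here-equi}(2) pick $\cpx V\in\Db{A\modcat}$ with $\Omega^N_{\mathscr D}(\mathcal C)\subseteq\big(\add(\cpx V)\big)_{m+1}$. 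Since $F$ restricts to $\Kb{\mathrm{proj}}$, Lemma~\ref{lem-functor-proj} gives integers $s_0<t_0$, depending only on $F$, with $F\big(\mathscr K^{[p,q]}(\pmodcat A)\big)\subseteq\mathscr K^{[s_0+p,\,t_0+q]}(\pmodcat B)$ for all $p<q$.

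The core step is to compare $\Omega^r_{\mathscr D}(F(\cpx X))$, for $\cpx X\in\mathcal C$ and a fixed large $r$, with $\Omega^{r-N}_{\mathscr D}\!\big(F(\Omega^N_{\mathscr D}(\cpx X))\big)$. Let $\cpx P$ be the minimal projective resolution of $\cpx X$. The brutal-truncation triangle $\cpx P_{\ge 1}\to\cpx X\to\Omega^0_{\mathscr D}(\cpx X)\to\cpx P_{\ge 1}[1]$, together with Lemma~\ref{lem-x-tri} (parameters $0$ and $N-1$, then rotated), which yields a triangle $\cpx Y_{\cpx X}\to\Omega^0_{\mathscr D}(\cpx X)\to\Omega^N_{\mathscr D}(\cpx X)[N]\to\cpx Y_{\cpx X}[1]$ with $\cpx Y_{\cpx X}\in\mathscr K^{[1-N,0]}(\pmodcat A)$, produces after applying $F$ two perfect complexes $F(\cpx P_{\ge 1})$ and $F(\cpx Y_{\cpx X})$ of $B$-modules whose lowest nonzero degrees are bounded below by $s_0+1$ and by $s_0+1-N$ respectively; crucially these bounds do not depend on $\cpx X$. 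Hence for every $r$ exceeding the fixed threshold $N-s_0-1$ the minimal projective resolutions of both are concentrated in degrees $>-r$, so $\Omega^r_{\mathscr D}$ kills them both. Applying $\Omega^r_{\mathscr D}$ to the two triangles obtained from $F$ and using Lemma~\ref{lem-omega-shift} and Lemma~\ref{lem-prop-omega}(3), this vanishing collapses them into an isomorphism $\Omega^r_{\mathscr D}(F(\cpx X))\oplus P_{\cpx X}\simeq\Omega^{r-N}_{\mathscr D}\!\big(F(\Omega^N_{\mathscr D}(\cpx X))\big)$ for some projective $B$-module $P_{\cpx X}$.

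Since $F$ is a triangle functor it preserves the operation $\star$ and the passage to $\add$, so $\Omega^N_{\mathscr D}(\cpx X)\in\big(\add(\cpx V)\big)_{m+1}$ gives $F(\Omega^N_{\mathscr D}(\cpx X))\in\big(\add(F(\cpx V))\big)_{m+1}$. Feeding the defining triangles of this expression through $\Omega^{r-N}_{\mathscr D}$ and repeatedly invoking Lemma~\ref{lem-omega-shift}, exactly as in the second half of the proof of Lemma~\ref{lem-it-equi-1} (enlarging the test complex by the regular module $B$ to absorb the projective summands produced), one obtains $\Omega^{r-N}_{\mathscr D}\!\big(F(\Omega^N_{\mathscr D}(\cpx X))\big)\oplus P'_{\cpx X}\in\big(\add(\cpx W)\big)_{m+1}$, where $\cpx W:=\Omega^{r-N}_{\mathscr D}(F(\cpx V))\oplus B$ is independent of $\cpx X$ and contains $B$ as a summand, and $P'_{\cpx X}$ is projective. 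Combining with the previous paragraph, $\Omega^r_{\mathscr D}(F(\cpx X))\oplus Q_{\cpx X}\in\big(\add(\cpx W)\big)_{m+1}$ with $Q_{\cpx X}:=P_{\cpx X}\oplus P'_{\cpx X}$ projective, for every $\cpx X\in\mathcal C$. Thus $\{\,\Omega^r_{\mathscr D}(F(\cpx X))\oplus Q_{\cpx X}\mid\cpx X\in\mathcal C\,\}\subseteq\big(\add(\cpx W)\big)_{m+1}$, which is relatively $m$-hereditary by Lemma~\ref{lem-r-here-equi}(1); since relative $m$-hereditariness is insensitive to projective summands (Lemma~\ref{lem-sumand-proj}(1), using the standing Assumption and that $B$ is a summand of $\cpx W$), the class $\Omega^r_{\mathscr D}(F(\mathcal C))$ is relatively $m$-hereditary, i.e. $F(\mathcal C)$ is an $(m,r)$-Igusa-Todorov class, whence $\ITdist(F(\mathcal C))\le m=\ITdist(\mathcal C)$.

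The two genuine obstacles are: (i) complexes in $\mathcal C$ have no uniform cohomological amplitude — circumvented by the fact that a bounded complex of projectives is annihilated by $\Omega^r_{\mathscr D}$ once $r$ exceeds a bound depending only on its \emph{lowest} nonzero degree, and that degree is controlled uniformly through $F$ by Lemma~\ref{lem-functor-proj}; and (ii) the recurrent projective summands coming from Lemma~\ref{lem-omega-shift}, which must be tracked throughout and absorbed by systematically enlarging test complexes by the regular module $B$, as permitted by Lemma~\ref{lem-sumand-proj} and the standing Assumption. Point (ii) is where the bookkeeping is most delicate.
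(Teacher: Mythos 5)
Your proposal follows essentially the same strategy as the paper's proof: pick a uniform shift $r$ large enough (depending only on $F$ and the Igusa-Todorov exponent, not on $\cpx{X}$), show that the perfect "connecting" complexes between $\cpx X$ and $\Omega^N_{\mathscr D}(\cpx X)$ have their lowest degree controlled uniformly through $F$ by Lemma~\ref{lem-functor-proj}, so that $\Omega^r_{\mathscr D}$ annihilates them, and thereby obtain an isomorphism $\Omega^r_{\mathscr D}(F(\cpx X))\oplus(\text{proj})\simeq\Omega^{r-N}_{\mathscr D}(F(\Omega^N_{\mathscr D}(\cpx X)))$; then push the defining triangles through $\Omega^{r-N}_{\mathscr D}$ and absorb the projective debris from Lemma~\ref{lem-omega-shift}. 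The paper compresses your two triangles (brutal truncation plus Lemma~\ref{lem-x-tri}) into a single application of Lemma~\ref{lem-x-tri} combined with $\cpx X\simeq\Omega^{-t}_{\mathscr D}(\cpx X)[-t]$, but this is cosmetic. One small slip: your brutal-truncation triangle is written as $\cpx P_{\ge 1}\to\cpx X\to\Omega^0_{\mathscr D}(\cpx X)\to\cpx P_{\ge 1}[1]$; the correct orientation (from $0\to\cpx P_{\le 0}\to\cpx P\to\cpx P_{\ge 1}\to 0$) is $\Omega^0_{\mathscr D}(\cpx X)\to\cpx X\to\cpx P_{\ge 1}\to\Omega^0_{\mathscr D}(\cpx X)[1]$, but this does not affect the vanishing argument or the conclusion. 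Your explicit flagging of the two bookkeeping issues (non-uniform amplitude of $\mathcal C$ and recurrent projective summands) and their resolutions matches the implicit structure of the paper's argument.
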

	\begin{proof}
		Set $\ITdist(\mathcal{C})=m$.
		By Definition \ref{def-itdist-c}, $\mathcal{C}$ is $(m,n)$-Igusa-Todorov class for some integer $n$. For each $\cpx{Y}\in F(\mathcal{C})$, there is $\cpx{X}\in \mathcal{C}$ such that $\cpx{Y}\simeq F(\cpx{X})$. By Definition \ref{def-r-hered}, there is a complex
		$\cpx{U}\in \Db{A\modcat}$ and triangles in $\Db{A\modcat}$:
		$$\xymatrix@R=.2cm@C=.4cm{
			\cpx{X}_1\ar[r]& \cpx{U}_0\ar[r]& \Omega^n_{\mathscr{D}}(\cpx{X})\ar[r]& \cpx{X}_1[1],\\
			\cpx{X}_2\ar[r]& \cpx{U}_1\ar[r]& \cpx{X}_1\ar[r]& \cpx{X}_2[1],\\
			\cpx{X}_3\ar[r]& \cpx{U}_2\ar[r]& \cpx{X}_2\ar[r]& \cpx{X}_3[1],\\
			& \vdots& \vdots& \\
			\cpx{X}_m\ar[r]& \cpx{U}_{m-1}\ar[r]& \cpx{X}_{m-1}\ar[r]& \cpx{X}_m[1],
		}$$
		where $\cpx{U}_i\in \add(\cpx{U})$ for $0\le i\le m-1$ and $\cpx{X}_m\in\add(\cpx{U})$. Then we have triangles in $\Db{B\modcat}$:
		\begin{equation}\label{f-triangle-1}
			\begin{aligned}
				\xymatrix@R=.2cm@C=.4cm{
					F(\cpx{X}_1)\ar[r]& F(\cpx{U}_0)\ar[r]& F(\Omega^n_{\mathscr{D}}(\cpx{X}))\ar[r]& F(\cpx{X}_1)[1],\\
					F(\cpx{X}_2)\ar[r]& F(\cpx{U}_1)\ar[r]& F(\cpx{X}_1)\ar[r]& F(\cpx{X}_2)[1],\\
					F(\cpx{X}_3)\ar[r]& F(\cpx{U}_2)\ar[r]& F(\cpx{X}_2)\ar[r]& F(\cpx{X}_3)[1],\\
					& \vdots& \vdots& \\
					F(\cpx{X}_m)\ar[r]& F(\cpx{U}_{m-1})\ar[r]& F(\cpx{X}_{m-1})\ar[r]& F(\cpx{X}_m)[1],
				}
			\end{aligned}
		\end{equation}
		where $F(\cpx{U}_i)\in \add(F(\cpx{U}))$ for $0\le i\le m-1$ and $F(\cpx{X}_m)\in\add(F(\cpx{U}))$.
		
		On the other hand, assume that $\cpx{X}\in \mathscr{D}^{[s,t]}(A\modcat)$ such that $t>\max\{-n,s\}$. By Lemma \ref{lem-x-tri}, we have a triangle
		$$\Omega_{\mathscr{D}}^{n}(\cpx{X})[t+n-1]\lra \cpx{\bar{P}}\lra \Omega_{\mathscr{D}}^{-t}(\cpx{X})\lra \Omega_{\mathscr{D}}^{n}(\cpx{X})[t+n],$$ where $\cpx{\bar{P}}\in \mathscr{K}^{[-t-n+1,0]}(\pmodcat{A})$.
		Then we have a triangle
		\begin{align}\label{f-shift}
			\Omega_{\mathscr{D}}^{n}(\cpx{X})[n-1]\lra \cpx{\bar{P}}[-t]\lra \Omega_{\mathscr{D}}^{-t}(\cpx{X})[-t]\lra \Omega_{\mathscr{D}}^{n}(\cpx{X})[n].
		\end{align}
		Define $\cpx{P}:=\cpx{\bar{P}}[-t]$. Then $\cpx{P}\in\mathscr{K}^{[-n+1,t]}(\pmodcat{A})$. By Lemma \ref{lem-prop-omega}(2), $\cpx{X}\simeq \Omega_{\mathscr{D}}^{-t}(\cpx{X})[-t]$. Thus the triangle (\ref{f-shift}) can be written as
		\begin{align}\label{f-shift-1}
			\Omega_{\mathscr{D}}^{n}(\cpx{X})[n-1]\lra \cpx{P}\lra \cpx{X}\lra \Omega_{\mathscr{D}}^{n}(\cpx{X})[n].
		\end{align}
		Applying the functor $F$ to the triangle (\ref{f-shift-1}), we have
		\begin{align}\label{f-shift-2}
			F(\Omega_{\mathscr{D}}^{n}(\cpx{X}))[n-1]\lra F(\cpx{P})\lra F(\cpx{X})\lra F(\Omega_{\mathscr{D}}^{n}(\cpx{X}))[n].
		\end{align}
		Assume that $F(A)\in \mathscr{K}^{[u,v]}(\pmodcat{B})$. By Lemma \ref{lem-functor-proj}, we have $F(\cpx{P})\in \mathscr{K}^{[u-n+1,v+t]}(\pmodcat{B})$. Then $\Omega_{\mathscr{D}}^{i}(F(\cpx{P}))=0$ for $i>n-u-1$.

		By Lemma \ref{lem-omega-shift}, applying $\Omega_{\mathscr{D}}^{n-u+1}$ to the triangle
		\begin{align*}
			F(\cpx{X})\lra F(\Omega_{\mathscr{D}}^{n}(\cpx{X}))[n]\lra F(\cpx{P})[1]\lra F(\cpx{X})[1],
		\end{align*}
		we have a triangle
		\begin{align*}
			\Omega_{\mathscr{D}}^{n-u+1}(F(\cpx{X}))
			\lra \Omega_{\mathscr{D}}^{n-u+1}(F(\Omega_{\mathscr{D}}^{n}(\cpx{X}))[n])\oplus Q
			\lra \Omega_{\mathscr{D}}^{n-u+1}(F(\cpx{P})[1])\lra \Omega_{\mathscr{D}}^{n-u+1}(F(\cpx{X})),
		\end{align*}
		with $Q\in \pmodcat{A}$.
		By Lemma \ref{lem-prop-omega}(3), we get that $\Omega_{\mathscr{D}}^{n-u+1}(F(\cpx{P})[1])\simeq \Omega_{\mathscr{D}}^{n-u}(F(\cpx{P}))=0$.
		Then $$\Omega_{\mathscr{D}}^{n-u+1}(F(\cpx{X}))
		\simeq \Omega_{\mathscr{D}}^{n-u+1}(F(\Omega_{\mathscr{D}}^{n}(\cpx{X}))[n])\oplus Q
		\simeq \Omega_{\mathscr{D}}^{-u+1}(F(\Omega_{\mathscr{D}}^{n}(\cpx{X})))\oplus Q,$$ where the second isomorphism from Lemma \ref{lem-prop-omega}(3). Applying $\Omega_{\mathscr{D}}^{-u+1}$ to the triangles (\ref{f-triangle-1}), by Lemma \ref{lem-omega-shift}, we have triangles
		\begin{equation}\label{f-triangle-2}
			\begin{aligned}
				\xymatrix@R=.2cm@C=.4cm{
					\Omega_{\mathscr{D}}^{-u+1}(F(\cpx{X}_1))\ar[r]&
					\Omega_{\mathscr{D}}^{-u+1}(F(\cpx{U}_0))\oplus Q_0\ar[r]& \Omega_{\mathscr{D}}^{-u+1}(F(\Omega^n_{\mathscr{D}}(\cpx{X})))
					\ar[r]&
					\Omega_{\mathscr{D}}^{-u+1}(F(\cpx{X}_1))[1],\\
					\Omega_{\mathscr{D}}^{-u+1}(F(\cpx{X}_2))
					\ar[r]&
					\Omega_{\mathscr{D}}^{-u+1}(F(\cpx{U}_1))\oplus Q_1
					\ar[r]&
					\Omega_{\mathscr{D}}^{-u+1}(F(\cpx{X}_1))
					\ar[r]&
					\Omega_{\mathscr{D}}^{-u+1}(F(\cpx{X}_2))[1],\\
					\Omega_{\mathscr{D}}^{-u+1}(F(\cpx{X}_3))
					\ar[r]&
					\Omega_{\mathscr{D}}^{-u+1}(F(\cpx{U}_2))\oplus Q_2
					\ar[r]&
					\Omega_{\mathscr{D}}^{-u+1}(F(\cpx{X}_2))
					\ar[r]&
					\Omega_{\mathscr{D}}^{-u+1}(F(\cpx{X}_3))[1]
					,\\
					& \vdots& \vdots& \\
					\Omega_{\mathscr{D}}^{-u+1}(F(\cpx{X}_m))
					\ar[r]&
					\Omega_{\mathscr{D}}^{-u+1}(F(\cpx{U}_{m-1}))\oplus Q_{m-1}
					\ar[r]&
					\Omega_{\mathscr{D}}^{-u+1}(F(\cpx{X}_{m-1}))
					\ar[r]&
					\Omega_{\mathscr{D}}^{-u+1}(F(\cpx{X}_m))[1],
				}
			\end{aligned}
		\end{equation}
		where $Q_i\in \pmodcat{A}$ for $0\le i\le m-1$.
		It follows from $\Omega_{\mathscr{D}}^{n-u+1}(F(\cpx{X}))
		\simeq \Omega_{\mathscr{D}}^{-u+1}(F(\Omega_{\mathscr{D}}^{n}(\cpx{X})))\oplus Q$ and the triangles (\ref{f-triangle-2}) that $\Omega_{\mathscr{D}}^{n-u+1}(F(\mathcal{C}))$ is relatively $m$-hereditary. Thus $\ITdist(F(\mathcal{C}))\le m=\ITdist(\mathcal{C})$.
	\end{proof}

	\begin{Theo}\label{thm-rec}
		Let $A$, $B$ and $C$ be Artin algebras.
		Suppose that there is a recollement among the derived categories
		$\D{A\Modcat}$, $\D{B\Modcat}$ and $\D{C\Modcat}$:
		\begin{align}\label{sect-rec}
			\xymatrixcolsep{4pc}\xymatrix{
				\D{B\Modcat} \ar[r]|{i_*=i_!} &\D{A\Modcat} \ar@<-2ex>[l]|{i^*} \ar@<2ex>[l]|{i^!} \ar[r]|{j^!=j^*}  &\D{C\Modcat}. \ar@<-2ex>[l]|{j_!} \ar@<2ex>[l]|{j_{*}}
			}
		\end{align}
		
		{\rm (1)(i)} If the recollement {\rm (\ref{sect-rec})} extends one step downwards, then $\ITdist(C)\le \ITdist(A)$.
		
		{\rm (ii)} If the recollement {\rm (\ref{sect-rec})} extends one step upwards, then $\ITdist(B)\le \ITdist(A)$.
		
		{\rm (2)} If the recollement {\rm (\ref{sect-rec})} extends one step downwards and extends one step upwards, then $$\max\{\ITdist(B),\ITdist(C)\}\le \ITdist(A)\le \ITdist(B)+\ITdist(C)+1.$$
		
		{\rm (3)(i)} If the recollement {\rm (\ref{sect-rec})} extends one step downwards and $\gd(B)<\infty$, then $\ITdist(C)= \ITdist(A)$.
		
		{\rm (ii)} If $\gd(C)<\infty$, then $\ITdist(B)= \ITdist(A)$.
	\end{Theo}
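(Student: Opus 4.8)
The plan is to transfer every assertion to a statement about Igusa--Todorov distances of subclasses of $\Db{A\modcat}$ by means of Lemmas~\ref{lem-it-equi-1} and~\ref{lem-it-equi}, and then to lean on two facts: a triangle functor that restricts to $\mathscr{K}^b({\rm proj})$ does not increase the Igusa--Todorov distance (Lemma~\ref{lemma-functor-it}), while an arbitrary triangle functor maps a cohomological window into a cohomological window (Lemmas~\ref{lem-functor-proj} and~\ref{lem-functor}). At each step I would read off from Lemma~\ref{lem-rec-prop} which of the six functors restrict to $\mathscr{K}^b({\rm proj})$, $\mathscr{K}^b({\rm inj})$ or $\mathscr{D}^b({\rm mod})$ under the relevant extension hypothesis, and I would use the unit/counit isomorphisms $j^{*}j_{*}\simeq\Id$, $j^{*}j_{!}\simeq\Id$, $i^{*}i_{*}\simeq\Id$ and $i^{!}i_{*}\simeq\Id$.

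For~(1)(i): the downward extension makes $j^{*}$ restrict to $\mathscr{K}^b({\rm proj})$ and $j_{*}$ restrict to $\mathscr{D}^b({\rm mod})$. By Lemma~\ref{lem-functor}, $j_{*}(C\modcat)$ lies in a fixed window $\mathscr{D}^{[s,t]}(A\modcat)$, so $\ITdist(j_{*}(C\modcat))\le\ITdist(\mathscr{D}^{[s,t]}(A\modcat))=\ITdist(A)$ by Lemmas~\ref{lem-itdist-c}(3) and~\ref{lem-it-equi}; since $j^{*}j_{*}\simeq\Id$, applying Lemma~\ref{lemma-functor-it} to $j^{*}$ gives $\ITdist(C)=\ITdist(C\modcat)=\ITdist(j^{*}(j_{*}(C\modcat)))\le\ITdist(j_{*}(C\modcat))\le\ITdist(A)$. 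Part~(ii) is identical, using $i^{*}$ (which restricts to $\mathscr{K}^b({\rm proj})$ unconditionally and to $\mathscr{D}^b({\rm mod})$ by the upward extension), $i_{*}$, and $i^{*}i_{*}\simeq\Id$. For~(2), the bound $\max\{\ITdist(B),\ITdist(C)\}\le\ITdist(A)$ follows from~(1). For the upper bound I would take $M\in A\modcat$ and use the recollement triangle $j_{!}j^{*}(M)\lra M\lra i_{*}i^{*}(M)\lra j_{!}j^{*}(M)[1]$: by Lemma~\ref{lem-functor}, $j^{*}(M)$ and $i^{*}(M)$ lie in fixed windows $\mathscr{D}^{[a,b]}(C\modcat)$ and $\mathscr{D}^{[c,d]}(B\modcat)$ (the second uses the upward extension, making $i^{*}$ restrict to $\mathscr{D}^b({\rm mod})$), while $j_{!}$ (upward extension) and $i_{*}$ restrict to $\mathscr{D}^b({\rm mod})$, so $A\modcat\subseteq j_{!}(\mathscr{D}^{[a,b]}(C\modcat))\star i_{*}(\mathscr{D}^{[c,d]}(B\modcat))$. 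Lemma~\ref{lem-itdist-2} then bounds $\ITdist(A)$ by $\ITdist(j_{!}(\mathscr{D}^{[a,b]}(C\modcat)))+\ITdist(i_{*}(\mathscr{D}^{[c,d]}(B\modcat)))+1$, and since $j_{!}$ (unconditionally) and $i_{*}$ (downward extension) restrict to $\mathscr{K}^b({\rm proj})$, Lemmas~\ref{lemma-functor-it} and~\ref{lem-it-equi} bound the two summands by $\ITdist(C)$ and $\ITdist(B)$.

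For~(3)(i), by~(1)(i) it remains to prove $\ITdist(A)\le\ITdist(C)$. For $M\in A\modcat$ I would use the triangle $i_{*}i^{!}(M)\lra M\lra j_{*}j^{*}(M)\lra i_{*}i^{!}(M)[1]$. Since $i^{!}$ restricts to $\mathscr{D}^b({\rm mod})$ (downward extension) and $\gd(B)<\infty$, $i^{!}(M)$ lies in a fixed window of $\Db{B\modcat}=\Kb{\pmodcat{B}}$, so $i_{*}$ (restricting to $\mathscr{K}^b({\rm proj})$ by the downward extension) and Lemma~\ref{lem-functor-proj} put $i_{*}i^{!}(M)$ in a fixed window $\mathscr{K}^{[p_{0},q_{0}]}(\pmodcat{A})$ independent of $M$, whence $\Omega_{\mathscr{D}}^{n}(i_{*}i^{!}(M))=0$ for all $n>-p_{0}$ and all $M$; applying Lemma~\ref{lem-omega-shift} to a suitable rotation of the triangle (and Lemma~\ref{lem-prop-omega}(3)) yields $\Omega_{\mathscr{D}}^{n}(M)\simeq\Omega_{\mathscr{D}}^{n}(j_{*}j^{*}(M))\oplus P$ with $P$ projective, for all large $n$. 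One must then check that $\gd(B)<\infty$ together with the downward extension forces $j_{*}$ to restrict to $\mathscr{K}^b({\rm proj})$: for $\cpx{Q}\in\Kb{\pmodcat{C}}$, the recollement triangle at $j_{*}\cpx{Q}$ reads $j_{!}\cpx{Q}\lra j_{*}\cpx{Q}\lra i_{*}i^{*}(j_{*}\cpx{Q})\lra j_{!}\cpx{Q}[1]$ (via $j^{*}j_{*}\simeq\Id$), where $j_{!}\cpx{Q}\in\Kb{\pmodcat{A}}$, so $i_{*}i^{*}(j_{*}\cpx{Q})\in\Db{A\modcat}$; then $i^{*}(j_{*}\cpx{Q})\simeq i^{!}(i_{*}i^{*}(j_{*}\cpx{Q}))\in\Db{B\modcat}=\Kb{\pmodcat{B}}$ (because $i^{!}$ restricts to $\mathscr{D}^b({\rm mod})$), so $i_{*}i^{*}(j_{*}\cpx{Q})\in\Kb{\pmodcat{A}}$ and hence $j_{*}\cpx{Q}\in\Kb{\pmodcat{A}}$. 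Since $j^{*}(A\modcat)$ sits in a fixed window $\mathscr{D}^{[a,b]}(C\modcat)$, combining the above with Lemmas~\ref{lem-itdist-c}, \ref{lem-sumand-proj}, \ref{lemma-functor-it} (for $j_{*}$) and~\ref{lem-it-equi} gives $\ITdist(A)=\ITdist(\Omega_{\mathscr{D}}^{n}(A\modcat))\le\ITdist\big(j_{*}(\mathscr{D}^{[a,b]}(C\modcat))\big)\le\ITdist(C)$. For~(3)(ii), I would first note that $\gd(C)<\infty$ makes $\Db{C\modcat}=\Kb{\pmodcat{C}}=\Kb{\imodcat{C}}$, so $j^{*}$ restricts to both $\mathscr{K}^b({\rm proj})$ and $\mathscr{K}^b({\rm inj})$, hence by Lemma~\ref{lem-rec-prop} the recollement extends one step downwards and one step upwards; in particular~(1)(ii) gives $\ITdist(B)\le\ITdist(A)$. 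For the reverse inequality I would run the argument of~(3)(i) with the two recollement triangles interchanged: using $j_{!}j^{*}(M)\lra M\lra i_{*}i^{*}(M)\lra j_{!}j^{*}(M)[1]$, the hypothesis $\gd(C)<\infty$ makes $j^{*}(M)$ a perfect $C$-complex in a fixed window and $j_{!}$ restricts to $\mathscr{K}^b({\rm proj})$, so $j_{!}j^{*}(M)$ lies in a fixed window of $\Kb{\pmodcat{A}}$ and its syzygies vanish uniformly; thus $\Omega_{\mathscr{D}}^{n}(M)\simeq\Omega_{\mathscr{D}}^{n}(i_{*}i^{*}(M))\oplus P$ for large $n$, and as $i^{*}$ restricts to $\mathscr{D}^b({\rm mod})$ (upward extension) and $i_{*}$ to $\mathscr{K}^b({\rm proj})$ (downward extension), $\ITdist(A)\le\ITdist(i_{*}(\mathscr{D}^{[c,d]}(B\modcat)))\le\ITdist(B)$; combined with~(1)(ii), $\ITdist(A)=\ITdist(B)$.

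I expect the main obstacle to be the uniformity bookkeeping: one must ensure that, as $M$ runs over $A\modcat$, the auxiliary complexes $i^{*}M$, $i^{!}M$, $j^{*}M$, the perfect ``glued pieces'' and all their syzygies stay inside cohomological windows that do \emph{not} depend on $M$ (so that Lemma~\ref{lem-it-equi} is applicable uniformly and the syzygy-vanishing in part~(3) is genuinely uniform), and that one applies the rotations of the recollement triangles so that the stray projective summand produced by Lemma~\ref{lem-omega-shift} lands on the correct side. The most delicate single point is the claim in~(3)(i) that $j_{*}$ restricts to $\mathscr{K}^b({\rm proj})$ once $\gd(B)<\infty$, which is not among the equivalent conditions in Lemma~\ref{lem-rec-prop} and has to be extracted from the recollement triangle as sketched above.
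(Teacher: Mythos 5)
Your proposal is correct and follows essentially the same route as the paper: pass to $\ITdist$ of subclasses via Lemmas~\ref{lem-it-equi-1} and~\ref{lem-it-equi}, use Lemma~\ref{lemma-functor-it} for functors restricting to $\Kb{\rm proj}$, use Lemma~\ref{lem-functor}/\ref{lem-functor-proj} for uniform cohomological windows, and use the recollement triangles together with Lemmas~\ref{lem-omega-shift}, \ref{lem-itdist-2}, \ref{lem-itdist-c}, \ref{lem-sumand-proj}. Two places where you deviate slightly, both acceptably: for~(1)(ii) you argue directly with $i^{*}i_{*}\simeq\Id$ (exploiting that $i^{*}$ restricts to $\Kb{\rm proj}$ unconditionally and to $\Db{\rm mod}$ by the upward extension, while $i_{*}$ restricts to $\Db{\rm mod}$ unconditionally), whereas the paper rotates the recollement via the second left adjoints and quotes~(1)(i); both are valid. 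More notably, in~(3)(i) you correctly flag that ``$j_{*}$ restricts to $\Kb{\rm proj}$'' is \emph{not} literally one of the equivalent conditions in Lemma~\ref{lem-rec-prop}, and you supply a self-contained derivation from the triangle $j_{!}\cpx{Q}\to j_{*}\cpx{Q}\to i_{*}i^{*}(j_{*}\cpx{Q})\to j_{!}\cpx{Q}[1]$ together with $i^{!}i_{*}\simeq\Id$ and $\Db{B\modcat}=\Kb{\pmodcat{B}}$. The paper instead deduces $i^{!}$ restricts to $\Kb{\rm proj}$ and cites ``Lemma~\ref{lem-rec-prop}(2)''; what is actually meant is Lemma~\ref{lem-rec-prop}(1) applied to the \emph{lower} recollement of the height-$2$ ladder (in which $j_{*}$ and $i^{!}$ play the roles of $\tilde{i}_{*}$ and $\tilde{j}^{*}$), so the citation is either a typo or requires the role-swapping to be made explicit. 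Your direct argument sidesteps that bookkeeping and is, if anything, cleaner. The rest (the windows, the vanishing of $\Omega_{\mathscr{D}}^{l}(i_{!}i^{!}(X))$ resp.\ $\Omega_{\mathscr{D}}^{l}(j_{!}j^{!}(X))$ for uniformly large $l$, and the deduction via Lemma~\ref{lem-omega-shift}) matches the paper; the only cosmetic slip is writing the conclusion as $\Omega_{\mathscr{D}}^{n}(M)\simeq\Omega_{\mathscr{D}}^{n}(j_{*}j^{*}(M))\oplus P$ when Lemma~\ref{lem-omega-shift} puts the projective summand on the $\Omega_{\mathscr{D}}^{n}(M)$ side, which is immaterial after Lemma~\ref{lem-sumand-proj}.
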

	
	\begin{proof}
		(1) (i) By Lemma \ref{lem-rec-prop}(1), $j^{*}$ and $j_{*}$ restrict to $\Db{\rm mod}$, and $j_{*}$ is fully faithful. Thus $C\modcat=j^{*}j_{*}(C\modcat)$. Also $j^{*}$ restricts to $\Kb{\rm proj}$. By Lemma \ref{lemma-functor-it}, we have $\ITdist(C\modcat)\le \ITdist(j_{*}(C\modcat))$. Since $j_{*}$ restricts to $\Db{\rm mod}$,
		there are integers $s<t$ such that $j_{*}(C\modcat)\subseteq \mathscr{D}^{[s,t]} (A\modcat)$ by Lemma \ref{lem-functor}.
		Then $\ITdist(j_{*}(C\modcat))\le\ITdist(\mathscr{D}^{[s,t]} (A\modcat))$.
		By Lemma \ref{lem-it-equi}, $\ITdist(\mathscr{D}^{[s,t]}(A\modcat))=\ITdist(A)$.
		Thus $\ITdist(C\modcat)\le \ITdist(A)$.
		
		(ii) By Lemma \ref{lem-rec-prop}(2), $i^*$ has a left adjoint $F$, $j_{!}$ has a left adjoint $G$, and we have a recollement
		\begin{align*}
			\xymatrixcolsep{4pc}\xymatrix{
				\D{C\Modcat} \ar[r]|{j_!} &\D{A\Modcat} \ar@<-2ex>[l]|{G} \ar@<2ex>[l]|{j^!} \ar[r]|{i^*}  &\D{B\Modcat}, \ar@<-2ex>[l]|{F} \ar@<2ex>[l]|{i_{*}}
			}
		\end{align*}
		which extends one step downwards. By (1), we obtain $\ITdist(B)\le \ITdist(A)$.
		
		(2) By (1),  we have to show $\ITdist(A)\le \ITdist(B)+\ITdist(C)+1$. Indeed, by Lemma \ref{lem-rec-prop}, we have two recollements:
		\begin{align*}
			\xymatrixcolsep{4pc}\xymatrix{
				\Kb{\pmodcat{B}} \ar[r]|{i_*=i_!} &\Kb{\pmodcat{A}} \ar@<-2ex>[l]|{i^*} \ar@<2ex>[l]|{i^!} \ar[r]|{j^!=j^*}  &\Kb{\pmodcat{C}}, \ar@<-2ex>[l]|{j_!} \ar@<2ex>[l]|{j_{*}}
			}
		\end{align*}
		\begin{align*}
			\xymatrixcolsep{4pc}\xymatrix{
				\Db{B\modcat} \ar[r]|{i_*=i_!} &\Db{A\modcat} \ar@<-2ex>[l]|{i^*} \ar@<2ex>[l]|{i^!} \ar[r]|{j^!=j^*}  &\Db{C\modcat}. \ar@<-2ex>[l]|{j_!} \ar@<2ex>[l]|{j_{*}}
			}
		\end{align*}
		Then for each $A$-module $X$, there is a triangle $j_{!}j^{!}(X)\ra X \ra i_{*}i^{*}(X)\ra j_{!}j^{!}(X)$ in $\Db{A\modcat}$. Thus $$A\modcat\subseteq j_{!}j^{!}(A\modcat)\star i_{*}i^{*}(A\modcat).$$
		By Lemma \ref{lem-itdist-c} and Lemma \ref{lem-itdist-2},  $$\ITdist(A\modcat)\le \ITdist(j_{!}j^{!}(A\modcat))+\ITdist(i_{*}i^{*}(A\modcat))+1.$$
		By a similar augument as in (1), we know that $\ITdist(j_{!}j^{!}(A\modcat))\le \ITdist(C\modcat)$ and $\ITdist(i_{*}i^{*}(A\modcat))\le \ITdist(B\modcat)$. Then the desired result follows from Lemma \ref{lem-it-equi-1}.
		
		(3)(i) By (1)(i), we obtain $\ITdist(C)\le \ITdist(A)$. We have to show $\ITdist(C)\ge \ITdist(A)$. Indeed, by Lemma \ref{lem-rec-prop}, we have a right recollement
		\begin{align*}
			\xymatrixcolsep{4pc}\xymatrix{
				\Db{B\modcat} \ar[r]|{i_*=i_!} &\Db{A\modcat}  \ar@<2ex>[l]|{i^!} \ar[r]|{j^!=j^*}  &\Db{C\modcat}. \ar@<2ex>[l]|{j_{*}}
			}
		\end{align*} Thus for each $X\in A\modcat$, there is a triangle in $\Db{A\modcat}$
		$$i_{!}i^{!}(X)\lra X\lra j_{*}j^{*}(X)\lra i_{*}i^{!}(X)[1].$$
		By Lemma \ref{lem-functor}, there exist two integers $p<q$ such that $i^{!}(A\modcat)\subseteq \mathscr{D}^{[p,q]}(B\modcat)$. Due to $u:=\gd(B)<\infty$, $\mathscr{D}^{[p,q]}(B\modcat)\subseteq \mathscr{K}^{[p-u,q]}(\pmodcat{B})$.
		Note that $i_{!}$ restricts to $\Kb{\rm proj}$. By Lemma \ref{lem-functor-proj}, there exist two integers $s<t$ such that  $i_{!}i^{!}(A\modcat)\subseteq \mathscr{K}^{[s+p-u,t+q]}(\pmodcat{A})$. Then $\Omega_{\mathscr{D}}^l(i_{!}i^{!}(X))=0$ for $l>u-s-p$.
		By Lemma \ref{lem-omega-shift}, we have $\Omega_{\mathscr{D}}^l(X)\oplus P \simeq \Omega_{\mathscr{D}}^l(j_{*}j^{*}(X))$ with $P\in \pmodcat{A}$ for $l>u-s-p$. Thus
		$\ITdist(\Omega^l_{\mathscr{D}}(A\modcat))=\ITdist(\Omega^l_{\mathscr{D}}(j_{*}j^{*}(A\modcat)))$ for $l>u-s-p$. Take $l=u-s-p+1$, by Lemma \ref{lem-itdist-c}(2), $\ITdist(A\modcat)=\ITdist(j_{*}j^{*}(A\modcat))$. Since $\gd(B)<\infty$ and $i^{!}$ restricts to $\Db{\rm mod}$, $i^{!}$ restricts to $\Kb{\rm proj}$. By Lemma \ref{lem-rec-prop}(2), $j_{*}$ restricts to $\Kb{\rm proj}$. By a similar augument as in (1), we know that $\ITdist(j_{*}j^{*}(A\modcat))\le \ITdist(C\modcat)$. Thus $\ITdist(A\modcat)\le \ITdist(C\modcat)$. Hence $\ITdist(A\modcat)=\ITdist(C\modcat)$. By Lemma \ref{lem-it-equi-1}, $\ITdist(A)=\ITdist(C)$.
		
		(ii) By Lemma \ref{lem-rec-prop}, $j^{*}$ restricts to $\Db{\rm mod}$. Then $j^{*}(A)$ and  $j^{*}(D(A))$ in $\Db{C\modcat}$. It follows from $\gd(C)<\infty$ that $\Db{C\modcat}=\Kb{\pmodcat{C}}=\Kb{\imodcat{C}}$. Thus the recollement (\ref{sect-rec}) extends one step downwards and extends one step upwards by Lemma \ref{lem-rec-prop}. By (1)(ii), we obtain $\ITdist(B)\le \ITdist(A)$.
		
		Now, we prove $\ITdist(B)\ge \ITdist(A)$. Indeed, by a similar augument as in (2), for each $X\in A\modcat$, there is a triangle in $\Db{A\modcat}$ $$j_{!}j^{!}(X)\lra X \lra i_{*}i^{*}(X)\lra j_{!}j^{!}(X)[1].$$
		By Lemma \ref{lem-functor}, since $j^{!}$ restricts to $\Db{\rm mod}$, there exist two integers $p<q$ such that $j^{!}(A\modcat)\subseteq \mathscr{D}^{[p,q]}(C\modcat)$. Due to $v:=\gd(C)<\infty$, $\mathscr{D}^{[p,q]}(C\modcat)\subseteq \mathscr{K}^{[p-v,q]}(\pmodcat{C})$.
		Note that $j_{!}$ restricts to $\Kb{\rm proj}$. By Lemma \ref{lem-functor-proj}, there exist two integers $s<t$ such that  $j_{!}j^{!}(A\modcat)\subseteq \mathscr{K}^{[s+p-v,t+q]}(\pmodcat{A})$. Then $\Omega_{\mathscr{D}}^l(j_{!}j^{!}(X))=0$ for $l>v-s-p$. By Lemma \ref{lem-omega-shift}, we have $\Omega_{\mathscr{D}}^l(X)\oplus Q \simeq \Omega_{\mathscr{D}}^l(i_{*}i^{*}(X))$ with $Q\in \pmodcat{A}$ for $l>v-s-p$. Then
		$\ITdist(\Omega^l_{\mathscr{D}}(A\modcat))=\ITdist(\Omega^l_{\mathscr{D}}(i_{*}i^{*}(A\modcat)))$ for $l>v-s-p$. Take $l=v-s-p+1$, by Lemma \ref{lem-itdist-c}(2), $\ITdist(A\modcat)=\ITdist(i_{*}i^{*}(A\modcat))$.
		By a similar augument as in (1), we know that $\ITdist(i_{*}i^{*}(A\modcat))\le \ITdist(B\modcat)$. Thus $\ITdist(A\modcat)\le \ITdist(B\modcat)$. Hence $\ITdist(A\modcat)=\ITdist(B\modcat)$. By Lemma \ref{lem-it-equi-1}, $\ITdist(A)=\ITdist(B)$.
	\end{proof}
	
	\begin{Koro}\label{cor-ladder}
		Let $A$, $B$ and $C$ be Artin algebras.
		
		{\rm (1)} Suppose that there is a ladder of $\D{A\Modcat}$ by $\D{B\Modcat}$ and $\D{C\Modcat}$ whose height is equal to $3$. Then $$\max\{\ITdist(B),\ITdist(C)\}\le \ITdist(A)\le \ITdist(B)+\ITdist(C)+1.$$
		
		{\rm (2)} Suppose that there is a recollement among the derived categories
		$\Db{A\modcat}$, $\Db{B\modcat}$ and $\Db{C\modcat}$:
		\begin{align*}
			\xymatrixcolsep{4pc}\xymatrix{
				\Db{B\modcat} \ar[r]|{i_*=i_!} &\Db{A\modcat} \ar@<-2ex>[l]|{i^*} \ar@<2ex>[l]|{i^!} \ar[r]|{j^!=j^*}  &\Db{C\modcat}. \ar@<-2ex>[l]|{j_!} \ar@<2ex>[l]|{j_{*}}
			}
		\end{align*}
		Then $$\max\{\ITdist(B),\ITdist(C)\}\le \ITdist(A)\le \ITdist(B)+\ITdist(C)+1.$$
	\end{Koro}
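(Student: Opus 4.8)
Both statements will be derived from Theorem~\ref{thm-rec}. For part~(1), recall that a ladder of height $3$ amounts to five consecutive rows, hence to three recollements stacked on top of one another, and that each of these recollements has $\D{A\Modcat}$ as middle term and $\D{B\Modcat},\D{C\Modcat}$ as outer terms, the left/right roles being possibly interchanged from one recollement to the next. The plan is to single out the \emph{middle} one of the three, say $\mathcal{R}$: the recollement lying immediately above it exhibits $\mathcal{R}$ as extending one step upwards, while the one immediately below it exhibits $\mathcal{R}$ as extending one step downwards. Theorem~\ref{thm-rec}(2) applied to $\mathcal{R}$ then gives the two inequalities, and a possible interchange of $B$ and $C$ is immaterial because both $\max\{\ITdist(B),\ITdist(C)\}$ and $\ITdist(B)+\ITdist(C)+1$ are symmetric in $B$ and $C$.

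In part~(2) the recollement is given only at the level of bounded derived categories of finitely generated modules, so Lemma~\ref{lem-rec-prop}, which concerns recollements of unbounded derived module categories, is not directly available and the proof of Theorem~\ref{thm-rec} cannot simply be quoted. The point I would first establish is that the four functors $i^{*}$, $i_{*}=i_{!}$, $j_{!}$ and $j^{*}=j^{!}$ all restrict to $\Kb{\rm proj}$; note that all six functors automatically restrict to $\Db{\rm mod}$, being functors between bounded derived categories of finitely generated modules, and that $i_{*},j_{!},j_{*}$ are fully faithful by the recollement axioms. For $i_{*}$, the adjunction $i_{!}\dashv i^{!}$ yields
$$\Hom_{\Db{A\modcat}}\!\big(i_{*}(B),M[n]\big)\ \cong\ \Hom_{\Db{B\modcat}}\!\big(B,i^{!}(M)[n]\big)\ =\ H^{n}\!\big(i^{!}(M)\big)$$
for every $A$-module $M$; by Lemma~\ref{lem-functor} applied to the triangle functor $i^{!}\colon\Db{A\modcat}\to\Db{B\modcat}$ there is an integer $t$, independent of $M$, with $H^{n}(i^{!}(M))=0$ for all $n>t$, so testing against the simple modules and using minimality of the projective resolution forces $i_{*}(B)\in\Kb{\pmodcat{A}}$; since $\Kb{\pmodcat{B}}$ is generated as a triangulated category by $B$ and $i_{*}$ is a triangle functor, $i_{*}$ restricts to $\Kb{\rm proj}$. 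The same computation handles $i^{*}$ (via $i^{*}\dashv i_{*}$ and the cohomological amplitude of $i_{*}(M)$), $j_{!}$ (via $j_{!}\dashv j^{!}$ and the amplitude of $j^{*}(M)$), and $j^{*}$ (via $j^{!}\dashv j_{*}$ and the amplitude of $j_{*}(M)$).

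With these restriction properties in hand, the proof of Theorem~\ref{thm-rec} carries over almost verbatim. Using $j^{*}j_{*}\simeq\Id$ (as $j_{*}$ is fully faithful), the fact that $j^{*}$ restricts to $\Kb{\rm proj}$, Lemma~\ref{lemma-functor-it}, Lemma~\ref{lem-functor}, and Lemmas~\ref{lem-it-equi} and~\ref{lem-it-equi-1}, one obtains
$$\ITdist(C)=\ITdist(C\modcat)\le\ITdist\big(j_{*}(C\modcat)\big)\le\ITdist\big(\mathscr{D}^{[p,q]}(A\modcat)\big)=\ITdist(A),$$
and the analogous chain built on $i^{*}i_{*}\simeq\Id$ yields $\ITdist(B)\le\ITdist(A)$. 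For the upper bound, the glueing triangle $j_{!}j^{!}(X)\to X\to i_{*}i^{*}(X)\to j_{!}j^{!}(X)[1]$ in $\Db{A\modcat}$ shows that $A\modcat\subseteq j_{!}j^{!}(A\modcat)\star i_{*}i^{*}(A\modcat)$, so Lemma~\ref{lem-itdist-2} gives $\ITdist(A)\le\ITdist(j_{!}j^{!}(A\modcat))+\ITdist(i_{*}i^{*}(A\modcat))+1$, and Lemma~\ref{lemma-functor-it} together with the fact that $j_{!}$ and $i_{*}$ restrict to $\Kb{\rm proj}$ (and Lemmas~\ref{lem-functor},~\ref{lem-it-equi},~\ref{lem-it-equi-1}) bounds the two summands by $\ITdist(C)$ and $\ITdist(B)$. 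Equivalently, the restriction properties above are precisely what is needed to lift the given recollement to a recollement of unbounded derived module categories that extends one step both upwards and downwards, after which part~(1) applies. I expect the restriction statement of the previous paragraph to be the main obstacle; the remainder is a transcription of the proof of Theorem~\ref{thm-rec}.
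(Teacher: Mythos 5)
Your proof of part (1) is essentially the paper's: the authors also take the middle recollement of the height-$3$ ladder, observe that it extends one step in both directions, and apply Theorem~\ref{thm-rec}(2); you just spell out why the middle one works and why the possible swap of $B$ and $C$ is harmless.

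For part (2) you take a genuinely different route. The paper lifts the given recollement of $\Db{-\modcat}$'s to a recollement of unbounded derived module categories by invoking~\cite[Proposition~4.1]{akly17}, then cites~\cite[Lemma~3.4]{ww22} to conclude that the lifted recollement extends one step in both directions, at which point Theorem~\ref{thm-rec}(2) applies. You instead stay entirely in the bounded setting and prove directly that $i^{*}$, $i_{*}$, $j_{!}$ and $j^{*}$ restrict to $\Kb{\rm proj}$: in each case you pass through the relevant adjunction, observe $\Hom_{\Db{A\modcat}}(i_{*}(B),M[n])\cong H^{n}(i^{!}(M))$ (and analogously for the others), use Lemma~\ref{lem-functor} to get a bound on cohomological amplitude that is uniform in $M$, and then the standard fact that a complex whose $\Hom$'s into all simples vanish in large positive degrees has a bounded minimal projective resolution. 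Since $\Kb{\pmodcat{R}}$ is the thick subcategory generated by $R$, this gives the restriction for each of the four functors, and from there the argument of Theorem~\ref{thm-rec} transcribes. This argument is correct, and arguably more self-contained than the paper's, since it avoids the lifting machinery of~\cite{akly17} and~\cite{ww22}; what it does not give you is the lifted unbounded recollement itself, which the paper's approach produces as a byproduct and which could be reused elsewhere. Both routes are sound.
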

	\begin{proof}
		(1) By the definition of ladder, without loss of generality, we assume that there is a recollement form as the recollement (\ref{sect-rec}) which extends one step downwards and one step upwards. Thus the statement in (1) follows from Theorem \ref{thm-rec}(2).
		
		(2) By \cite[Proposition 4.1]{akly17}, the recollement of bounded derived categories can be lifted to a recollement of unbounded derived categories, where all functors can be restricted to $\Db{\rm mod}$. By \cite[Lemma 3.4]{ww22}, the recollement of unbounded derived categories extends one step downwards and one step upwards. Thus the statement in (2) follows from Theorem \ref{thm-rec}(2).
	\end{proof}
	
	The following corollary tells us that the Igusa-Todorov distance is an invariant under derived equivalences.
	\begin{Koro}\label{cor-der}
		Let $A$ and $B$ be Artin algebras. If $A$ and $B$ are derived-equivalent, then $$\ITdist(A)=\ITdist(B).$$
	\end{Koro}
	
	By Lemma \ref{lem-it} and Lemma \ref{lem-rec-prop}, Theorem \ref{thm-rec} generalizes \cite[Theorem 1.1]{ww22}.
	
	\begin{Koro}\label{thm-rec-cor}
		Let $A$, $B$ and $C$ be Artin algebras.
		Suppose that there is a recollement among the derived categories
		$\D{A\Modcat}$, $\D{B\Modcat}$ and $\D{C\Modcat}:$
		\begin{align}\label{sect-rec-cor}
			\xymatrixcolsep{4pc}\xymatrix{
				\D{B\Modcat} \ar[r]|{i_*=i_!} &\D{A\Modcat} \ar@<-2ex>[l]|{i^*} \ar@<2ex>[l]|{i^!} \ar[r]|{j^!=j^*}  &\D{C\Modcat}. \ar@<-2ex>[l]|{j_!} \ar@<2ex>[l]|{j_{*}}
			}
		\end{align}
		
		{\rm (1)} Suppose that $A$ is syzygy-finite $($respectively, Igusa-Todorov$).$
		
		{\rm (i)} If the recollement {\rm (\ref{sect-rec-cor})} extends one step downwards, then $C$ is syzygy-finite $($respectively, Igusa-Todorov$).$
		
		{\rm (ii)} If the recollement $(\ref{sect-rec-cor})$ extends one step upwards, then $B$ is syzygy-finite $($respectively, Igusa-Todorov$).$
		
		{\rm (2)} Suppose that the recollement $(\ref{sect-rec-cor})$ extends one step downwards and one step upwards. If both $B$ and $C$ are syzygy-finite, then $A$ is Igusa-Todorov.
		
		{\rm (3)(i)} If the recollement $(\ref{sect-rec-cor})$ extends one step downwards and $\gd(B)<\infty$, then $C$ is syzygy-finite $($respectively, Igusa-Todorov$)$ if and only if $A$ is syzygy-finite $($respectively, Igusa-Todorov$).$
		
		{\rm (ii)} If $\gd(C)<\infty$, then $B$ is syzygy-finite $($respectively, Igusa-Todorov$)$ if and only if $A$ is syzygy-finite $($respectively, Igusa-Todorov{\rm )}.
	\end{Koro}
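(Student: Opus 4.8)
The plan is to obtain every assertion of the corollary as a direct translation of Theorem \ref{thm-rec}, using Lemma \ref{lem-it} as a dictionary: recall that $\ITdist(\Lambda)=0$ exactly when $\Lambda$ is syzygy-finite, and $\ITdist(\Lambda)\le 1$ exactly when $\Lambda$ is an Igusa-Todorov algebra. So in each case I would rewrite the hypothesis as a numerical statement about $\ITdist$, invoke the appropriate part of Theorem \ref{thm-rec} to bound the relevant $\ITdist$, and then read the conclusion back through Lemma \ref{lem-it}.

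Concretely: for (1)(i), assuming $A$ is syzygy-finite (resp. Igusa-Todorov) gives $\ITdist(A)=0$ (resp. $\le 1$); since the recollement extends one step downwards, Theorem \ref{thm-rec}(1)(i) yields $\ITdist(C)\le\ITdist(A)$, hence $\ITdist(C)=0$ (resp. $\le 1$), so $C$ is syzygy-finite (resp. Igusa-Todorov). Part (1)(ii) is identical with $C$ replaced by $B$ and Theorem \ref{thm-rec}(1)(ii). For (2), the hypothesis that $B$ and $C$ are syzygy-finite gives $\ITdist(B)=\ITdist(C)=0$; since the recollement extends one step downwards and one step upwards, Theorem \ref{thm-rec}(2) gives $\ITdist(A)\le\ITdist(B)+\ITdist(C)+1=1$, so $A$ is Igusa-Todorov by Lemma \ref{lem-it}(2). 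For (3)(i), Theorem \ref{thm-rec}(3)(i) gives the equality $\ITdist(C)=\ITdist(A)$, so $\ITdist(C)=0\Leftrightarrow\ITdist(A)=0$ and $\ITdist(C)\le 1\Leftrightarrow\ITdist(A)\le 1$, which is exactly the claimed equivalence of syzygy-finiteness (resp. the Igusa-Todorov property); part (3)(ii) is the same via the equality $\ITdist(B)=\ITdist(A)$ of Theorem \ref{thm-rec}(3)(ii).

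I do not expect any genuine obstacle here: all the real work is already contained in Theorem \ref{thm-rec} and Lemma \ref{lem-it}. The only point requiring care is bookkeeping — matching each case to the correct item of Theorem \ref{thm-rec}, and in particular making sure the right one-sided extension hypothesis (downwards vs.\ upwards) is in force so that the corresponding inequality or equality for $\ITdist$ is available.
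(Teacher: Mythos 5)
Your proof is correct and is exactly the argument the paper has in mind: the corollary is presented there as an immediate consequence of Theorem \ref{thm-rec} combined with the dictionary of Lemma \ref{lem-it}, with no separate proof given. Your case-by-case translation (including using $\ITdist(A)\le\ITdist(B)+\ITdist(C)+1=1$ for part (2)) matches precisely.
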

	
	Let $\mathcal{A}$ and $\mathcal{B}$ be abelian categories. Recall that a functor $F:\mathscr{D}(\mathcal{A})\ra \mathscr{D}(\mathcal{B})$ between their derived categories is called an \textit{eventually homological} isomorphism (\cite{qin20}) if there is an integer $s$ such that for every pair of $X,Y\in \mathcal{A}$, and every $p>s$, there is an isomorphism
	$$\Hom_{\mathscr{D}(\mathcal{A})}(X,Y[p])\simeq \Hom_{\mathscr{D}(\mathcal{C})}(F(X),F(Y)[p])$$
	as abelian groups.
	
	The following corollary is a consequence of Theorem \ref{thm-rec}(2) and \cite[Theorem 4.2]{ww22}.
	\begin{Koro}
		Let $A$, $B$ and $C$ be Artin algebras.
		Suppose that there is a recollement among the derived categories
		$\D{A\Modcat}$, $\D{B\Modcat}$ and $\D{C\Modcat}${\rm :}
		\begin{align*}
			\xymatrixcolsep{4pc}\xymatrix{
				\D{B\Modcat} \ar[r]|{i_*=i_!} &\D{A\Modcat} \ar@<-2ex>[l]|{i^*} \ar@<2ex>[l]|{i^!} \ar[r]|{j^!=j^*}  &\D{C\Modcat}. \ar@<-2ex>[l]|{j_!} \ar@<2ex>[l]|{j_{*}}
			}
		\end{align*}
		If $j^{*}$ is an eventually homological isomorphism, then $$\max\{\ITdist(B),\ITdist(C)\}\le \ITdist(A)\le \ITdist(B)+\ITdist(C)+1.$$
	\end{Koro}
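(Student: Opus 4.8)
The plan is to reduce the two inequalities to Theorem~\ref{thm-rec}(2); the only point that needs verification is that the hypothesis on $j^*$ forces the given recollement to extend one step downwards and one step upwards, after which Theorem~\ref{thm-rec}(2) delivers the statement verbatim.

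First I would invoke \cite[Theorem 4.2]{ww22}. Since $j^*$ is an eventually homological isomorphism, that result supplies the relevant finiteness: concretely, I expect it to yield $i_*(B)\in\Kb{\pmodcat{A}}$ and $i_*(D(B))\in\Kb{\imodcat{A}}$, equivalently that $i_*$ restricts both to $\mathscr{K}^b({\rm proj})$ and to $\mathscr{K}^b({\rm inj})$. Then I would feed these two conditions into Lemma~\ref{lem-rec-prop}: by part~(1), the condition $i_*(B)\in\Kb{\pmodcat{A}}$ is one of the equivalent conditions for the recollement to extend one step downwards; by part~(2), the condition $i_*(D(B))\in\Kb{\imodcat{A}}$ is one of the equivalent conditions for it to extend one step upwards. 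With both one-step extensions in hand, Theorem~\ref{thm-rec}(2) applies and gives $\max\{\ITdist(B),\ITdist(C)\}\le \ITdist(A)\le \ITdist(B)+\ITdist(C)+1$, which is exactly the claim.

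The main --- indeed essentially the only --- obstacle is bookkeeping: matching the precise formulation of \cite[Theorem 4.2]{ww22} to the hypotheses of Theorem~\ref{thm-rec}(2), and in particular checking that ``eventually homological isomorphism'' is strong enough to produce \emph{both} one-step extensions of the ladder rather than only one of them. If \cite[Theorem 4.2]{ww22} is phrased instead via global dimension of an associated algebra, or via syzygy-type conditions, rather than directly in terms of the complexes $i_*(B)$ and $i_*(D(B))$, a short translation through Lemma~\ref{lem-rec-prop} bridges the gap; no homological input beyond what is already assembled in Section~\ref{sect-t-rec} is needed.
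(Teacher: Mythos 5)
Your proposal matches the paper's (implicit) proof: the paper states only that the corollary ``is a consequence of Theorem \ref{thm-rec}(2) and \cite[Theorem 4.2]{ww22},'' and your unpacking --- namely that the eventual-homological-isomorphism hypothesis on $j^*$ forces, via \cite[Theorem 4.2]{ww22} together with Lemma \ref{lem-rec-prop}, the recollement to extend one step downwards and one step upwards, after which Theorem \ref{thm-rec}(2) gives the two inequalities verbatim --- is exactly what that one-line citation entails. Your caution about the exact phrasing of \cite[Theorem 4.2]{ww22} is reasonable and does not affect the correctness of the logical structure.
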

	
	Now, we apply our result to stratifying ideals and triangular amtrix algebras.
	
	Let $A$ be an Artin algebra and let $e \in A$ be an idempotent such that $AeA$ is a stratifying ideal, that is, $Ae\otimes^{\mathbb{L}}_{eAe}eA\simeq AeA$. From \cite{CPS}, there is a recollement
	\begin{align}\label{rec-strat}
		\xymatrixcolsep{4pc}\xymatrix{
			\D{A/AeA\Modcat} \ar[r]|{i_*=i_!} &\D{A\Modcat} \ar@<-2ex>[l]|{i^*} \ar@<2ex>[l]|{i^!} \ar[r]|{j^!=j^*}  &\D{eAe\Modcat}, \ar@<-2ex>[l]|{j_!} \ar@<2ex>[l]|{j_{*}}
		}
	\end{align}
	where $i_{*}=A/AeA\otimes_{A/AeA}-$ and $j_{!}=Ae\otimes_{eAe}^{\mathbb{L}}-$. Then $i_{*}(A/AeA)=A/AeA$.
	
	(1) Suppose that $\pd(_AAeA)<\infty$. Then $\pd(_AA/AeA)<\infty$. By Lemma \ref{lem-rec-prop}(1), $i_{*}$ restricts to $\Kb{\rm proj}$ and the recollement (\ref{rec-strat}) extends one step downwards.
	
	(2) Suppose $\pd(Ae_{eAe})<\infty$. Then $\mathbb{R}\Hom_{\D{{eAe\text{-Mod}}}}(\mathbb{R}\Hom_{\D{eAe^{\opp}\Modcat}}(Ae,eAe),eAe)\simeq Ae$ in $\D{eAe\mbox{-mod}}$. Thus $j_{!}=Ae\otimes_{eAe}^{\mathbb{L}}-$ has the left adjoint $\mathbb{R}\Hom_{\D{eAe^{\opp}\Modcat}}(Ae,eAe)\otimes_{A}^{\mathbb{L}}-$. By Lemma \ref{lem-rec-prop}(2), the recollement (\ref{rec-strat}) extends one step upwards.
	
	Applying Theorem \ref{thm-rec}, we have the following corollary.
	
	\begin{Koro}
		Let $A$ be an Artin algebra and let $e \in A$ be an idempotent such that $AeA$ is a stratifying ideal. The the following holds.
		
		{\rm (1)(i)} If $\pd(_AAeA)<\infty$, then $\ITdist(eAe)\le \ITdist(A)$.
		
		{\rm (ii)} If $\pd(Ae_{eAe})<\infty$, then $\ITdist(A/AeA)\le \ITdist(A)$.
		
		{\rm (2)} If $\pd(_AAeA)<\infty$ and $\pd(Ae_{eAe})<\infty$, then $$\max\{\ITdist(A/AeA),\ITdist(eAe)\}\le \ITdist(A)\le \ITdist((A/AeA)+\ITdist(eAe)+1.$$
		
		{\rm (3)(i)} If $\gd(A/AeA)<\infty$ and $\pd(_AAeA)<\infty$, then $\ITdist(eAe)= \ITdist(A)$.
		
		{\rm (ii)} If $\gd(eAe)<\infty$, then $\ITdist(A/AeA)= \ITdist(A)$.
	\end{Koro}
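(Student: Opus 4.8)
The plan is to apply Theorem \ref{thm-rec} directly to the recollement (\ref{rec-strat}), in which the algebras $B$ and $C$ of that theorem are instantiated as $A/AeA$ and $eAe$, respectively. Since Theorem \ref{thm-rec} is phrased entirely in terms of whether the recollement extends one step downwards or upwards (and, in part (3), in terms of finiteness of global dimension), the only work to be done is to translate the projective-dimension hypotheses of the corollary into those extension conditions; this translation is exactly what was recorded in items (1) and (2) preceding the statement. Concretely, if $\pd(_AAeA)<\infty$ then $\pd(_AA/AeA)<\infty$, so $i_*(A/AeA)=A/AeA$ lies in $\Kb{\pmodcat{A}}$, and hence (\ref{rec-strat}) extends one step downwards by Lemma \ref{lem-rec-prop}(1); and if $\pd(Ae_{eAe})<\infty$ then $j_!=Ae\otimes^{\mathbb{L}}_{eAe}-$ has a left adjoint, so (\ref{rec-strat}) extends one step upwards by Lemma \ref{lem-rec-prop}(2).

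With this in hand, the five assertions follow at once. For (1)(i) the hypothesis gives ``extends one step downwards'', so Theorem \ref{thm-rec}(1)(i) yields $\ITdist(eAe)\le\ITdist(A)$; for (1)(ii) the hypothesis gives ``extends one step upwards'', so Theorem \ref{thm-rec}(1)(ii) yields $\ITdist(A/AeA)\le\ITdist(A)$. For (2), the two hypotheses together make (\ref{rec-strat}) extend both downwards and upwards, so Theorem \ref{thm-rec}(2) gives the displayed double inequality. For (3)(i), $\pd(_AAeA)<\infty$ again provides ``extends one step downwards'', and combined with $\gd(A/AeA)<\infty$ Theorem \ref{thm-rec}(3)(i) gives $\ITdist(eAe)=\ITdist(A)$. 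Finally, (3)(ii) is nothing but Theorem \ref{thm-rec}(3)(ii) applied with $C=eAe$.

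I expect no genuine obstacle here: the argument is a bookkeeping exercise in matching hypotheses to the inputs of Theorem \ref{thm-rec}. The one point worth flagging is part (3)(ii), where one should notice that Theorem \ref{thm-rec}(3)(ii) requires only $\gd(eAe)<\infty$ and no assumption on $\pd(_AAeA)$ or $\pd(Ae_{eAe})$ --- the reason being that in any recollement of derived module categories $j^{*}$ automatically restricts to $\Db{\rm mod}$, so finiteness of $\gd(eAe)$ already forces (\ref{rec-strat}) to extend in both directions, and no separate appeal to items (1) or (2) is needed there.
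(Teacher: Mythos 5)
Your proof is correct and follows exactly the route the paper takes: the preparatory remarks before the corollary show that $\pd(_{A}AeA)<\infty$ implies the recollement extends one step downwards (via $i_{*}(A/AeA)=A/AeA\in\mathscr{K}^b(\pmodcat{A})$ and Lemma \ref{lem-rec-prop}(1)), and that $\pd(Ae_{eAe})<\infty$ implies it extends one step upwards (via reflexivity giving $j_{!}$ a left adjoint and Lemma \ref{lem-rec-prop}(2)), after which all five assertions are read off from Theorem \ref{thm-rec} with $B=A/AeA$ and $C=eAe$. Your closing observation about (3)(ii) --- that $\gd(eAe)<\infty$ alone suffices because Theorem \ref{thm-rec}(3)(ii) imposes no extension hypothesis --- is accurate and matches the paper's statement.
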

	
	Let $B$ and $C$ be Artin algebras, $_BM_C$ be a $B$-$C$-bimodule. Consider the triangular matrix algebra
	$A=\left(\begin{smallmatrix}
		B&M\\
		0&C
	\end{smallmatrix}\right)$.
	Set $e_1=\left(\begin{smallmatrix}
		1&0\\
		0&0
	\end{smallmatrix}\right)$ and $e_2=\left(\begin{smallmatrix}
		0&0\\
		0&1
	\end{smallmatrix}\right)$. Note that $Ae_2A$ is a statifying ideals. Then we have a recollement
	\begin{align}\label{rec-tri}
		\xymatrixcolsep{4pc}\xymatrix{
			\D{B\Modcat} \ar[r]|{i_*=i_!} &\D{A\Modcat} \ar@<-2ex>[l]|{i^*} \ar@<2ex>[l]|{i^!} \ar[r]|{j^!=j^*}  &\D{C\Modcat}, \ar@<-2ex>[l]|{j_!} \ar@<2ex>[l]|{j_{*}}
		}
	\end{align}
	where $i_{*}=Ae_1\otimes_{B}^{\mathbb{L}}-$, $i^{!}=e_1A\otimes_{A}^{\mathbb{L}}-$, $j_{!}=Ae_2\otimes_{C}^{\mathbb{L}}-$ and $j^!=e_2A\otimes_{A}^{\mathbb{L}}-$. Note that $i_{*}(B)=Ae_1$. Since $Ae_1$ is a projective $A$-module, By Lemma \ref{lem-rec-prop}(1), $i_{*}$ restricts to $\Kb{\rm proj}$ and the recollement (\ref{rec-strat}) extends one step downwards.
	
	(1) Suppose $\pd(_BM)<\infty$. Note that $i^{!}(A)=e_1A\simeq B\oplus M$ as $B$-modules. Thus $\pd(_Bi^{!}(A))<\infty$. By Lemma \ref{lem-rec-prop}(1), $i^{!}$ restricts to $\Kb{\rm proj}$ and the recollement (\ref{rec-strat}) extends two step downwards. So, there is a ladder of $\D{A\Modcat}$ by $\D{B\Modcat}$ and $\D{C\Modcat}$ whose height is equal to $3$.
	
	(2) Suppose $\pd(M_C)<\infty$. Then $j_{!}(A)=Ae_2\simeq C\oplus M$ as right $C$-modules. Thus $\pd_{C^{\opp}}(Ae_2)<\infty$. Note that $\mathbb{R}\Hom_{\D{C\Modcat}}(\mathbb{R}\Hom_{\D{C^{\opp}\Modcat}}(Ae_2,C),C)\simeq Ae_2$ in $\D{C\Modcat}$. Thus $j_{!}=Ae_2\otimes_{C}^{\mathbb{L}}-$ has the left adjoint $\mathbb{R}\Hom_{\D{C^{\opp}\Modcat}}(Ae_2,C)\otimes_{A}^{\mathbb{L}}-$. By Lemma \ref{lem-rec-prop}(2), the recollement (\ref{rec-strat}) extends one step upwards. So, there is a ladder of $\D{A\Modcat}$ by $\D{B\Modcat}$ and $\D{C\Modcat}$ whose height is equal to $3$.
	
	Applying Theorem \ref{thm-rec} and Corollary \ref{cor-ladder}(1), we have the following corollary.
	\begin{Koro}\label{cor-tri}
		Let
		$A=\left(\begin{smallmatrix}
			B&M\\
			0&C
		\end{smallmatrix}\right)$
		be the triangular matrix algebra. The the following holds.
		
		{\rm (1)} $\ITdist(C)\le \ITdist(A)$.
		
		{\rm (2)} If $\pd(M_C)<\infty$ or $\pd(_BM)<\infty$, then $$\max\{\ITdist(B),\ITdist(C)\}\le \ITdist(A)\le \ITdist(B)+\ITdist(C)+1.$$
		
		{\rm (3)(i)} If $\gd(B)<\infty$, then $\ITdist(C)= \ITdist(A)$.
		
		{\rm (ii)} If $\gd(C)<\infty$, then $\ITdist(B)= \ITdist(A)$.
	\end{Koro}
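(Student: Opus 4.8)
The plan is to deduce every part of the corollary from Theorem \ref{thm-rec} and Corollary \ref{cor-ladder}(1) by feeding in the recollement (\ref{rec-tri}) together with the homological observations recorded immediately above the statement. The decisive free input is that $i_*(B)=Ae_1$ is a projective $A$-module, so by Lemma \ref{lem-rec-prop}(1) the recollement (\ref{rec-tri}) always extends one step downwards, with no hypothesis on the bimodule $_BM_C$. Hence parts (1) and (3)(i) already have their recollement-side hypothesis in hand, and (3)(ii) requires nothing extra either.

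For part (1) I would simply invoke Theorem \ref{thm-rec}(1)(i) for (\ref{rec-tri}), giving $\ITdist(C)\le\ITdist(A)$. For part (3)(i) I would combine this with the reverse inequality supplied by Theorem \ref{thm-rec}(3)(i), which applies since (\ref{rec-tri}) extends one step downwards and $\gd(B)<\infty$, to obtain $\ITdist(C)=\ITdist(A)$. Part (3)(ii) is Theorem \ref{thm-rec}(3)(ii) applied verbatim: (\ref{rec-tri}) is a recollement of derived module categories and $\gd(C)<\infty$, so $\ITdist(B)=\ITdist(A)$.

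For part (2) I would split into the two stated cases and exhibit in each a ladder of height $3$, then apply Corollary \ref{cor-ladder}(1). If $\pd(_BM)<\infty$, then $i^!(A)=e_1A\simeq B\oplus M$ has finite projective dimension as a $B$-module, so $i^!$ restricts to $\Kb{\rm proj}$; by Lemma \ref{lem-rec-prop}(1) the recollement (\ref{rec-tri}) then extends a second step downwards, hence supports a ladder of height $3$. If instead $\pd(M_C)<\infty$, then $j_!(A)=Ae_2\simeq C\oplus M$ satisfies $\pd_{C\opp}(Ae_2)<\infty$, so $\mathbb{R}\Hom_{\D{C\opp\Modcat}}(Ae_2,C)\otimes_A^{\mathbb{L}}-$ is a left adjoint of $j_!$; by Lemma \ref{lem-rec-prop}(2) the recollement (\ref{rec-tri}) extends one step upwards, and combined with the always-present downward extension this again yields a ladder of height $3$. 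In either case Corollary \ref{cor-ladder}(1) delivers $\max\{\ITdist(B),\ITdist(C)\}\le\ITdist(A)\le\ITdist(B)+\ITdist(C)+1$.

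I do not anticipate a real obstacle: the argument is a dictionary translation of each finiteness assumption into the correct clause of Theorem \ref{thm-rec} or Corollary \ref{cor-ladder}(1), and the only substantive verifications (that $e_1A$ and $Ae_2$ have the claimed finite projective dimensions over $B$ and $C\opp$, and that this forces the relevant one-sided adjoints of $i^!$ and $j_!$ to exist) are exactly the computations already carried out in the paragraphs preceding the corollary, and can be cited rather than repeated. The one place that needs care is tracking, in part (2), which extension (downward versus upward) each hypothesis produces, so that the height-$3$ ladder is correctly assembled before Corollary \ref{cor-ladder}(1) is applied.
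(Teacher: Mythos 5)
Your proposal is correct and follows essentially the same route the paper takes: the recollement (\ref{rec-tri}) always extends one step downwards because $i_*(B)=Ae_1$ is projective, the two hypotheses in part (2) each upgrade the ladder to height $3$ (a second downward step when $\pd(_BM)<\infty$, an upward step when $\pd(M_C)<\infty$), and then each clause is read off from Theorem \ref{thm-rec} or Corollary \ref{cor-ladder}(1). One small wording slip: Theorem \ref{thm-rec}(3)(i) already concludes the equality $\ITdist(C)=\ITdist(A)$ on its own, so there is no need to ``combine'' it with the inequality from part (1) --- but this does not affect the argument.
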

	
	Next, we consider one-point extensions and one-point coextensions. Let $B$ be a finite dimensional algebra over a field $k$ and $_BM$ is a $B$-module. Recall that the one-point extension of $B$
	by the module $_BM$ is the triangular matrix algebra $A=\left(\begin{smallmatrix}
		B&M\\
		0&k
	\end{smallmatrix}\right)$. Dually, the one-point coextension of $B$
	by the module $_BM$ is the triangular matrix algebra $A=\left(\begin{smallmatrix}
		k&D(_BM)\\
		0&B
	\end{smallmatrix}\right)$. Applying Corollary \ref{cor-tri}(3) to both one-point extension and one-point coextension, we get the following result.
	
	\begin{Koro}\label{cor-point}
		Let $B$ be a finite dimensional algebra over a field $k$ and $_BM$ is a $B$-module.
		
		{\rm (1)} Let $A=\left(\begin{smallmatrix}
			B&M\\
			0&k
		\end{smallmatrix}\right)$ be the one-point extension of $B$
		by $_BM$. Then $\ITdist(B)=\ITdist(A)$.
		
		{\rm (2)} Let $A=\left(\begin{smallmatrix}
			k&D(_BM)\\
			0&B
		\end{smallmatrix}\right)$ be the one-point coextension of $B$
		by $_BM$. Then $\ITdist(B)=\ITdist(A)$.
	\end{Koro}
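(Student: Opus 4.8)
The plan is to derive both statements as immediate specializations of Corollary~\ref{cor-tri}(3), the only substantive input being that the ground field $k$ has $\gd(k)=0<\infty$ and that the recollement attached to a triangular matrix algebra always extends one step downwards (as recorded in the discussion preceding Corollary~\ref{cor-tri}), so no additional hypotheses need to be checked.

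First I would handle the one-point extension in (1). Here $A=\left(\begin{smallmatrix} B & M \\ 0 & k \end{smallmatrix}\right)$ is precisely the triangular matrix algebra $\left(\begin{smallmatrix} B & M \\ 0 & C \end{smallmatrix}\right)$ of Corollary~\ref{cor-tri} with lower-right corner $C=k$. Since $\gd(k)=0<\infty$, Corollary~\ref{cor-tri}(3)(ii) applies verbatim and yields $\ITdist(B)=\ITdist(A)$.

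Next I would handle the one-point coextension in (2). Now $A=\left(\begin{smallmatrix} k & D(_BM) \\ 0 & B \end{smallmatrix}\right)$ is again a triangular matrix algebra $\left(\begin{smallmatrix} B' & M' \\ 0 & C' \end{smallmatrix}\right)$, but with $B'=k$, $M'=D(_BM)$ and $C'=B$. Since $\gd(B')=\gd(k)=0<\infty$, Corollary~\ref{cor-tri}(3)(i) gives $\ITdist(C')=\ITdist(A)$, i.e.\ $\ITdist(B)=\ITdist(A)$.

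There is essentially no analytic obstacle here: the entire content is a matter of correct bookkeeping, namely matching the corners of each matrix algebra with the roles of $B$ and $C$ in Corollary~\ref{cor-tri} and noting that in both cases the corner whose global dimension must be finite is exactly the ground field $k$. The one point deserving a word of care is the dual setup in (2), where one must invoke part (3)(i) rather than (3)(ii) of Corollary~\ref{cor-tri}, since it is now the upper-left corner that is the field.
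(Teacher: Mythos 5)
Your proof is correct and matches the paper's approach exactly: the paper's own justification is precisely to apply Corollary~\ref{cor-tri}(3) to both settings, and your careful matching of corners — identifying $C=k$ in case (1) to invoke (3)(ii), and $B'=k$ in case (2) to invoke (3)(i) — spells out the bookkeeping that the paper leaves implicit. No gap; nothing further needed.
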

	Finally, we will apply our results to produce some examples of
	algebras whose Igusa-Todorov distances are no less than $2$.
	\begin{Bsp}
		{\rm
			Let $\Lambda$ be finite-dimensional $k$-algebras over a field $k$ given by quiver with relations:
			\begin{equation*}
				\begin{tikzpicture}[scale=1.0, every node/.style={scale=1.0}]
					
					\node[] at (3,1) {$1$};
					\node [above] (adhm11) at (3.1,0.9) {};
					\node [above] (adhm21) at (3.1,0.8) {};
					
					\node [above] (adhm1) at (3,1) {};
					\node [below] (adhm2) at (3,1) {};
					\node [below] (adhm3) at (3,1) {};
					
					\node[] at (5,1.5) {$2$};
					\node [above] (adhm12) at (4.9,1.35) {};
					\node [above] (adhm2p1) at (5.1,1.4) {};
					
					\node[] at (7,1.5) {$p_1$};
					\node [above] (adhmp1p2) at (7.1,1.4) {};
					\node [above] (adhmp12) at (6.9,1.4) {};
					
					\node[] at (9,1.5) {$p_2$};
					\node [above] (adhmp2p1) at (8.9,1.4) {};
					
					\node[] at (10,1.5) {$\cdots$};
					
					\node[] at (11,1.5) {$p_{n-1}$};
					\node [above] (adhmpn1pn) at (11.3,1.4) {};
					
					\node[] at (13,1.5) {$p_{n}$};
					\node [above] (adhmpnpn1) at (12.9,1.4) {};
					
					%
					\node[] at (5,0.5) {$3$};
					\node [above] (adhm13) at (4.9,0.3) {};
					\node [above] (adhm3q1) at (5.1,0.4) {};
					
					\node[] at (7,0.5) {$q_1$};
					\node [above] (adhmq1q2) at (7.1,0.4) {};
					\node [above] (adhmq13) at (6.9,0.4) {};
					
					\node[] at (9,0.5) {$q_2$};
					\node [above] (adhmq2q1) at (8.9,0.4) {};
					
					\node[] at (10,0.5) {$\cdots$};
					
					\node[] at (11,0.5) {$q_{n-1}$};
					\node [above] (adhmqn1qn) at (11.3,0.4) {};
					
					\node[] at (13,0.5) {$q_{n}$};
					\node [above] (adhmqnqn1) at (12.9,0.4) {};
					
					%
					\node[] at (1.5,2.4) {$\Lambda:$};

					\node[] at (16,2.0) {$\alpha_1\alpha_2+\alpha_2\alpha_1=0,$};

					\node[] at (16,1.5) {$\alpha_1\alpha_3+\alpha_3\alpha_1=0,$};

					\node[] at (16,1.0) {$\alpha_2\alpha_3+\alpha_3\alpha_2=0,$};

					\node[] at (16,0.5) {$\alpha_1^2=\alpha_2^2=\alpha_3^2=0,$};

					\node[] at (16,0.0) {$\alpha_1\beta=\alpha_2\beta=\alpha_3\beta=0,$};

					\node[] at (16,-0.5) {$\gamma\alpha_1=\gamma\alpha_2=\gamma\alpha_3=0.$};
					
					\draw [thick, <-] (adhm1) to [out=120,in=30,looseness=20] node[above] {$\alpha_1$}(adhm1);
					\draw [thick, <-] (adhm2) to [out=320,in=230,looseness=20] node[below] {$\alpha_3$}(adhm2);
					\draw [thick, <-] (adhm3) to [out=215,in=125,looseness=20] node[left] {$\alpha_2$}(adhm3);
					
					\draw [thick, <-] (adhm11)  -- node [above] {$\beta$} (adhm12);
					\draw [thick, ->] (adhm21)  -- node [below] {$\gamma$} (adhm13);
					\draw [thick, <-] (adhm2p1)  -- node [above]{} (adhmp12);
					\draw [thick, <-] (adhmp1p2)  -- node [above]{} (adhmp2p1);
					\draw [thick, <-] (adhmpn1pn)  -- node [above]{} (adhmpnpn1);
					\draw [thick, ->] (adhm3q1)  -- node [above]{} (adhmq13);
					\draw [thick, ->] (adhmq1q2)  -- node [above]{} (adhmq2q1);
					\draw [thick, ->] (adhmqn1qn)  -- node [above]{} (adhmqnqn1);
				\end{tikzpicture}
			\end{equation*}
			We claim $\ITdist(\Lambda)=2$. Indeed, let $A$, $B$ and $C$ be finite-dimensional $k$-algebras over a field $k$ given by quivers with relations in Figure 1, Figure 2 and Figure 3, respectively.
			\begin{equation*}
				\begin{tikzpicture}[scale=1.0, every node/.style={scale=1.0}]
					
					\node[] at (3,1) {$1$};
					\node [above] (adhm11) at (3.1,0.9) {};
					\node [above] (adhm21) at (3.1,0.8) {};
					
					\node [above] (adhm1) at (3,1) {};
					\node [below] (adhm2) at (3,1) {};
					\node [below] (adhm3) at (3,1) {};

					\node[] at (5,1.5) {$2$};
					\node [above] (adhm12) at (4.9,1.35) {};
					\node[] at (5,0.5) {$3$};
					\node [above] (adhm13) at (4.9,0.3) {};

					\node[] at (1.5,2.4) {$A:$};

					\node[] at (5.5,-1.5) {Figure 1};

					\node[] at (8,2.0) {$\alpha_1\alpha_2+\alpha_2\alpha_1=0,$};

					\node[] at (8,1.5) {$\alpha_1\alpha_3+\alpha_3\alpha_1=0,$};

					\node[] at (8,1.0) {$\alpha_2\alpha_3+\alpha_3\alpha_2=0,$};

					\node[] at (8,0.5) {$\alpha_1^2=\alpha_2^2=\alpha_3^2=0,$};

					\node[] at (8,0.0) {$\alpha_1\beta=\alpha_2\beta=\alpha_3\beta=0,$};

					\node[] at (8,-0.5) {$\gamma\alpha_1=\gamma\alpha_2=\gamma\alpha_3=0.$};
					
					\draw [thick, <-] (adhm1) to [out=120,in=30,looseness=20] node[above] {$\alpha_1$}(adhm1);
					\draw [thick, <-] (adhm2) to [out=320,in=230,looseness=20] node[below] {$\alpha_3$}(adhm2);
					\draw [thick, <-] (adhm3) to [out=215,in=125,looseness=20] node[left] {$\alpha_2$}(adhm3);
					
					\draw [thick, <-] (adhm11)  -- node [above] {$\beta$} (adhm12);
					\draw [thick, ->] (adhm21)  -- node [below] {$\gamma$} (adhm13);
				\end{tikzpicture}
			\end{equation*}
			\begin{equation*}
				\begin{tikzpicture}[scale=1.0, every node/.style={scale=1.0}]
					
					\node[] at (3,1) {$1$};
					\node [above] (adhm11) at (3.1,0.9) {};
					\node [above] (adhm21) at (3.1,0.8) {};
					
					\node [above] (adhm1) at (3,1) {};
					\node [below] (adhm2) at (3,1) {};
					\node [below] (adhm3) at (3,1) {};

					\node[] at (5,1.5) {$2$};
					\node [above] (adhm12) at (4.9,1.35) {};
					%

					%
					\node[] at (1.5,2.4) {$B:$};
					
					\node[] at (5.5,-1.7) {Figure 2};

					\node[] at (8,2.0) {$\alpha_1\alpha_2+\alpha_2\alpha_1=0,$};

					\node[] at (8,1.5) {$\alpha_1\alpha_3+\alpha_3\alpha_1=0,$};

					\node[] at (8,1.0) {$\alpha_2\alpha_3+\alpha_3\alpha_2=0,$};

					\node[] at (8,0.5) {$\alpha_1^2=\alpha_2^2=\alpha_3^2=0,$};

					\node[] at (8,0.0) {$\alpha_1\beta=\alpha_2\beta=\alpha_3\beta=0.$};
					
					\draw [thick, <-] (adhm1) to [out=120,in=30,looseness=20] node[above] {$\alpha_1$}(adhm1);
					\draw [thick, <-] (adhm2) to [out=320,in=230,looseness=20] node[below] {$\alpha_3$}(adhm2);
					\draw [thick, <-] (adhm3) to [out=215,in=125,looseness=20] node[left] {$\alpha_2$}(adhm3);
					
					\draw [thick, <-] (adhm11)  -- node [above] {$\beta$} (adhm12);
				\end{tikzpicture}
				\qquad
				\begin{tikzpicture}[scale=1.0, every node/.style={scale=1.0}]
					
					\node[] at (3,1) {$1$};
					\node [above] (adhm11) at (3.1,0.9) {};
					\node [above] (adhm21) at (3.1,0.8) {};
					
					\node [above] (adhm1) at (3,1) {};
					\node [below] (adhm2) at (3,1) {};
					\node [below] (adhm3) at (3,1) {};
					
					%
					%
					
					%
					\node[] at (1.5,2.4) {$C:$};
					
					\node[] at (4.5,-1.5) {Figure 3};

					\node[] at (6,1.5) {$\alpha_1\alpha_2+\alpha_2\alpha_1=0,$};

					\node[] at (6,1.0) {$\alpha_1\alpha_3+\alpha_3\alpha_1=0,$};

					\node[] at (6,0.5) {$\alpha_2\alpha_3+\alpha_3\alpha_2=0,$};

					\node[] at (6,0) {$\alpha_1^2=\alpha_2^2=\alpha_3^2=0.$};
					
					\draw [thick, <-] (adhm1) to [out=120,in=30,looseness=20] node[above] {$\alpha_1$}(adhm1);
					\draw [thick, <-] (adhm2) to [out=320,in=230,looseness=20] node[below] {$\alpha_3$}(adhm2);
					\draw [thick, <-] (adhm3) to [out=215,in=125,looseness=20] node[left] {$\alpha_2$}(adhm3);
					
				\end{tikzpicture}
			\end{equation*}
			Let $S_{A}(i)$ be the simple $A$-module corresponding to the vertex $i$. Note that $A$ is the one-point coextension of $B$ by the simple $B$-module $S_{B}(1)$ and $B$ is the one-point extension of $C$ by the simple $C$-module $S_{C}(1)$.
			Also $\ITdist(C)=2$ (see Example \ref{ex-ex}). It follows from Corollary \ref{cor-point} that $\ITdist(A)=\ITdist(B)=\ITdist(C)=2$. Notice that $\Lambda$ is obtained from $A$ through $n$ times one-point extension and $m$ times one-point extension. Thus $\ITdist(\Lambda)=\ITdist(A)=2$ by Corollary \ref{cor-point}.
		}
	\end{Bsp}

\bigskip
\noindent{\bf Acknowledgements.}
Jinbi Zhang was supported by the National Natural Science Foundation of China (No. 12401038).

\noindent{\bf Declaration of interests.} The authors have no conflicts of interest to disclose.

\noindent{\bf Data availability.} No new data were created or analyzed in this study.

\medskip

\end{document}